\documentclass[11pt,twoside]{article}  
\usepackage{hyperref}
\usepackage[ansinew]{inputenc}
\usepackage{amsfonts, color}
\usepackage{amssymb,amsmath, amsthm}
\usepackage{latexsym}
\usepackage{graphicx}

\setlength{\textwidth}{158mm}
\setlength{\textheight}{219mm}
\setlength{\topmargin}{0mm}
\setlength{\oddsidemargin}{3mm}
\setlength{\evensidemargin}{3mm}

\usepackage{bm}

\def\E{\mathbb{E}}

\def\P{\mathbb{P}}

\newcommand {\debeq}{\begin{eqnarray*}}
\newcommand {\fineq}{\end{eqnarray*}}

\newcommand {\mbb} {\mathbb}

\def\e{{\rm e}\,}

\newtheorem	{thm}		{Theorem}[section]

\newtheorem	{lem} 	[thm]	{Lemma}
\newtheorem     {rem}      [thm]{Remark}
\newtheorem	{prop}	[thm]{Proposition}
\newtheorem	{cor}		[thm]{Corollary}

\begin{document}

\title{The topology and inference for multi-type Yule trees 
}
\author{Lea Popovic\\ and \\Mariolys Rivas\\ Department of Mathematics and Statistics\\ Concordia University\\ Montreal QC H3G 1M8,  Canada\\
}
\date{\today}
\maketitle
\begin{abstract}
\noindent We introduce two models for multi-type random trees motivated by studies of trait dependence in the evolution of species. Our discrete time model, the {\it multi-type ERM tree}, is a generalization of Markov propagation models on a random tree generated by a binary search or `equal rates Markov' mechanism. Our continuous time model, the {\it multi-type Yule tree with mutations}, is a multi-type generalization of the tree generated by a pure birth or Yule process. 
We study type dependent topological properties of these two random tree models. We derive asymptotic results that allow one to infer model parameters from data on types at the leaves and at branch-points that are one step away from the leaves.
\end{abstract}

\tableofcontents

\noindent \textit{Running head.} The multi-type Yule process.\\
\textit{MSC Subject Classification (2000).} \\
\textit{Key words and phrases.}  ancestral tree --  multi-type branching process -- Yule tree -- binary search tree -- tree topology -- parameter reconstruction.\\
\medskip

\section{Introduction}
During the past decade there has been considerable activity in studying the effect trait differences may have on the rates of speciation and extinction in the evolution of species  (\cite{FitzT} gives an excellent presentation of these recent developments). The possibility that diversification may be trait dependent implies that these rates should not be inferred using standard trait independent methods. New likelihood methods that make better use of phylogenetic information were recently developed: ``BiSSE'' for binary state speciation and extinction, \cite{Maddison07}; ``QuaSSE'' for quantitative traits \cite{Fitzjohn10}; ``GeoSSE'' for geographic character traits \cite{Goldberg11}, ``CLASSE'' for punctuated modes of character change \cite{GoldIgic12}; and were used to make new conclusions about a number of different clades (see \cite{NGSmith} for a recent survey).

Inferring the evolutionary process poses in general a non-trivial reconstruction problem, as neither the rates nor the ancestral states in the phylogeny of present day species are known. The underlying ancestral trees are typically assumed to be known and reconstructed from aligned DNA sequence data.  Predicting ancestral states is then typically done with one of a number of heuristic methods based on the principle of either: counting, maximum parsimony, or maximum likelihood. In such studies a Markov chain of state changes is assumed to propagate down from the root along the given tree. Many interesting theoretical results exist on the ability to reconstruct ancestral states along the tree and the state at the root  from the states observed at the leaves (see \cite{MossSteel1} for a survey and \cite{GascSteel} for some recent developments) for which results from statistical physics theory have been  particularly useful (\cite{MosselS1,MosselS2}). The focus so far was on reconstructing hidden states along the underlying tree, rather than parameters of the Markov chain which propagates them. In phylogenetics the underlying tree is assumed to be either a random discrete binary tree, or a random Yule tree generated by a neutral pure birth process. The shape of such a tree has the distribution of `equal rates Markov' (ERM), and the tree resulting after propagating types can be called a {\it multi-type ERM tree}. We extend the model of propagating types down the tree to also include correlations of types between edges with the same branch-point. Consequently, information on leaves will be insufficient for reconstruction and we will also use correlated substructures of the tree. 

Inference for evolutionary processes whose birth and death rates are trait dependent adds an additional layer of mathematical difficulty. If we have a trait with finitely many variants (or a continuum of variants is discretized into finitely many bins) the full tree evolves according to a multi-type birth-death process, in which rates of speciation to different offspring types and the rate of extinction is specific to the type of that lineage. In such a branching process the shape of the tree and its edge lengths are inseparable from the distribution of states on the lineages. Both the ratio of speciation to extinction rates for each state, as well as the transitions from a certain state to another play an important role in how the ancestral states are distributed along the tree. The ancestral tree of this branching process, obtained by pruning away the extinct lineages, turns out to be a random tree we call {\it multi-type Yule tree with mutations}. In such a tree the chance of a lineage splitting is state dependent, which leaves a signature on which splits are more frequent than others and is reflected in the proportion of different types of splits at the tips of the tree. We will use this information in the reconstruction of model parameters.

Very few theoretical results have been obtained for ancestral trees of multi-type branching processes. Deriving an exact distribution for the ancestral tree is unsurprisingly challenging, as determining the likelihood of any split requires the knowledge of the parental type, and hence also all ancestral states on that lineage. In \cite{PopRivas} we developed a coalescent point-process approach to generating ancestral trees using the tips in an infinite (quasi-stationary) multi-type Galton-Watson branching processes. This construction relied on a horizontal exploration of the tips which was developed in \cite{AldPopovic} and extended in \cite{LambPopovic} (the standard vertical coalescent construction is not possible for branching processes with type-dependent offspring distributions). It  can be used for simulating and computing likelihood of ancestral trees, but calculating its statistical features is not easy, except in some very special cases. 

Instead we focus on analyzing newly introduced a priori models on possible ancestral tree shapes. Our {\it multi-type ERM tree} is a discrete time model that is a extension of Markov propagation models on a random tree generated by a mechanism which picks a random leaf to extend on. Our {\it multi-type Yule tree with mutations} is a continuous time model and is a multi-type generalization of the tree generated by a pure birth process. In order to investigate their topological features we analyze the number of different types of cherries and different types of pendants in the tree: {\it cherries} are pairs of leaves that are only one edge away from each other, and {\it pendants} are leaves that are more than a single edge away from another leaf. We use the random recursive mechanisms for generating splits in the trees to obtain exact results for finite sized trees, as well as asymptotic results as the trees grow in size. The distribution of the number of pendants and cherries in the tree reflects the model parameters and can be used to infer them. 

For the multi-type ERM random trees, we identify the mean number of cherries and their variance (Propositions~\ref{prop:meanscherries} and ~\ref{prop:varcherries}), and also derive asymptotic results as the number of leaves in the tree grows (Theorems~\ref{thm:polya-coro-1} and ~\ref{thm:polya-coro-2}). We use the limiting fraction of different cherries to infer the multi-type probabilities in the model (Corollary~\ref{cor:ERMinferrence}). Examples of particular models for multi-type ERM trees are discussed in Section~\ref{subsec:ERMcases}.
For the multi-type birth-and-death process we first identify the process obtained by pruning away the extinct lineages (Proposition~\ref{prop:ancestral}) as a specific type of a multi-type Yule tree with mutations. Using the distribution of types at the leaves (Lemma ~\ref{lem:meannudependant-eq}) we find the distribution of different cherries and pendants (Propositions~\ref{prop:meanmudependant} and ~\ref{prop:meangamma}) in a general multi-type Yule tree with mutations. We also derive their asymptotics in the long term limit (Theorem~\ref{thm:limit-eta-tdepnotcomm}) and provide the way in which the original speciation and extinction rates can be inferred from these topological features (Corollary~\ref{cor:contsinferrence}). 

\section{Multi-type ERM trees}

Consider a (single type) random tree constructed recursively, from a single node leaf, by picking at each step one leaf uniformly at random and creating a branch-point by attaching two new leaves to it. The distribution of this tree is called `equal rates Markov' (ERM) (first investigated in \cite{Harding}) and has a long list of mathematical results associated to it (\cite{Aldous1, Aldous2}). Trees with this distribution can be generated in a number of different ways, forwards in time - by using a (pure birth) Yule process stopped the first time it reaches a prescribed number of leaves and ignoring the random lengths of its branches, or backwards in time - starting from a prescribed number of leaves using a neutral (coalescent) Moran process. They have been used in numerous studies as a null model in investigating pattterns in tree shapes (\cite{MooersHeard}). In terms of its statistical features the number of cherries $C_n$ for a tree with $n$ leaves is known (\cite{McKSteel}) to have the following properties:
$$\mbb{E}[C_n]=\frac{n}{3}\,\,\mbox{ for }n\geq 3,\,\,\,\,\,\mbb{V}[C_n]=\frac{2n}{45}\,\,\mbox{ for }n\geq 5;$$
and their distribution satisfies a central limit theorem:
$$\frac{C_n-n/3}{\sqrt{2n/45}}\Rightarrow N(0,1).$$
Their results were shown using an extended P\'olya urn process (see \cite{S96} or \cite{J04}).

We consider a multi-type version of this random tree, where each node (branch-points and leaves) has a type $k\in\mathcal{K}$ associated with it. The shape of the tree is constructed in the same way as in the single type process, with each leaf having the same chance, regardless of its type, of being picked at random to create the next branch-point with two new leaves attached to it. The types of the two leaves being attached, however, depend on the type of the leaf that they are being attached to. For each type $i,j_1,j_2\in\mathcal{K}$ the probabilities $q^{j_1,j_2}_i$ determine the chance that a leaf of type $i$ has types $j_1,j_2$ attached to it. Since we do not distinguish between different embeddings of the tree in the plane, we can w.l.o.g. assume $j_1\le j_2$. We call this tree a {\it multi-type ERM tree}. The random tree with types is distributed as a Markov field (with propagation matrix $\{q^{j_1, j_2}_i\}_{i,j_1\le j_2\in\mathcal{K}}$) on an ERM tree. Note that, for each $i$, $\sum_{j_1\le j_2}q^{j_1,j_2}_i=1$. In order to avoid trivial cases that generate only single type trees we will assume throughout that $q^{ii}_i\neq 1, \forall i$.

\begin{figure}
\centering
{\includegraphics[width=10cm, trim =0cm 0.5cm 0cm 0cm, clip=true]{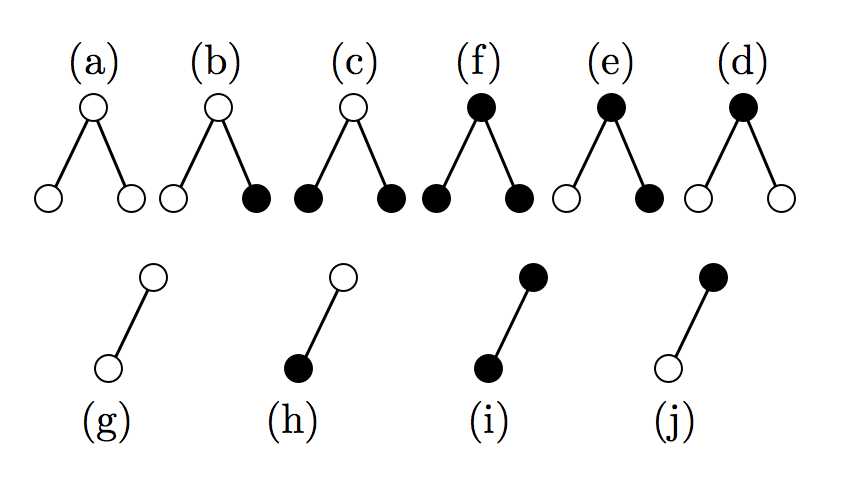}}\\
\caption{Type $1$ is denoted by a blank circle, and type $2$ by a full circle; different types of cherries: (a) type 111, (b) type 112, (c) type 122, (d) type 211, (e) type 212 and (f) type 222; and different types of pendants: (g) type 11, (h) type 12, (i) type 21 and (j) type 22.}
\label{fig:cherriestypes}
\end{figure}
\begin{figure}
\centering
{\includegraphics[width=5cm, trim =0cm 0.5cm 0cm 0cm, clip=true]{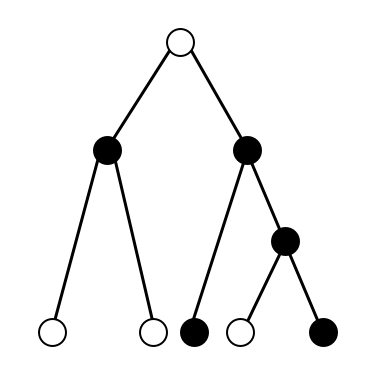}}\\
\caption{Tree with $N_1(5)=3,N_2(5)=2$, one cherry of type $211$, one cherry of  type $212$, and one pendant of type $22$.}
\label{fig:treecherries}
\end{figure}
For the sake of simplicity we consider $\mathcal{K}=\{1,2\}$. There are $k^2(k+1)/2=6$ different types of cherries $\{111,112,122, 211, 212, 222\}$ and $k^2=4$ different types of pendants $\{11, 12, 21, 22\}$, as illustrated in Figure~\ref{fig:cherriestypes}. Figure~\ref{fig:treecherries} illustrates cherries in an example of a tree with $n=5$ leaves.

\subsection{Moments of the number of different types of cherries}\label{subsec:ERMmoments}
For a tree with $n$ leaves we let $N_1(n)$ denote the number of leaves of type 1, $N_2(n)=n-N_1(n)$ the number of leaves of type 2, and  $C_{i}^{j_1j_2}(n)$  the number of cherries of type $ij_1j_2$. 
Their means are relatively straightforward to calculate.

\begin{lem}\label{lem:meanleaves}
Assume the probabilities $\{q_i^{j_1j_2}\}_{i,j_1\le j_2\in\{1,2\}}$ satisfy \mbox{\rm{($\star$)}: $c_1-c_2\notin\{-2,-2\}$} for  $c_1:=2q_1^{11}+q_1^{12}$ and $c_2:=2q_2^{11}+q_2^{12}$. Then, $\forall n\ge3$,
$$\nu_1(n):=\mbb{E}[N_1(n)]=\frac{c_2n}{2-c_1+c_2}-\frac{(2c_2 -(2-c_1+c_2)\nu_1(2))\Gamma(n-1+c_1-c_2)}{(2-c_1+c_2)\Gamma(c_1-c_2+2)\Gamma(n)},$$
where $\Gamma(n)$ is the gamma function, and $\nu_1(2)= \left\{ \begin{array}{cc}\!\!c_1, & \mbox{if }N_1(1)=1 \mbox{ (initial leaf type is 1)}\\ \!\!c_2, &\mbox{if }N_2(1)=1 \mbox{ (initial leaf type is 2)}\end{array}\right.$\!\!.\\
Analogous formula holds for $\nu_2(n):=\mbb{E}[N_2(n)]$ in which:\, $c_1$ is replaced by $c_1':=2q_2^{22}+q_2^{12}$ $(=2-c_2)$, $c_2$ is replaced by $c_2':= 2q_1^{22}+q_1^{12}$ $(=2-c_1)$ (notice $c_1'-c_2'=c_1-c_2$ \mbox{remains the same}), and $\nu_1(2)$ is replaced by $\nu_2(2)=\left\{ \begin{array}{cc}\!\!c_1', & \mbox{if }N_2(1)=1 \mbox{ (initial leaf type is 2)}\\ \!\!c_2', &\mbox{if }N_1(1)=1 \mbox{ (initial leaf type is 1)}\end{array}\right.$\!\!.\\
\end{lem}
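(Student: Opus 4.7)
The plan is to set up a first-order linear recurrence for $\nu_1(n)$ via a one-step conditioning argument, then solve it in closed form using a Gamma-function integrating factor.

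First I would condition on the tree at size $n$ and on the uniformly chosen leaf that is about to be split. Given that the selected leaf has type $1$ (conditional probability $N_1(n)/n$), the two replacement leaves contribute in expectation $2q_1^{11}+q_1^{12}=c_1$ type-$1$ offspring while the original type-$1$ leaf disappears, producing a net expected increment of $c_1-1$ to $N_1$. If instead the selected leaf has type $2$, the net expected increment is $2q_2^{11}+q_2^{12}=c_2$. Assembling both cases and taking unconditional expectation gives the affine recursion
$$\nu_1(n+1)=\nu_1(n)\cdot\frac{n+c_1-1-c_2}{n}+c_2,\qquad n\ge 2.$$

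Next I would solve this recurrence by the standard particular-plus-homogeneous decomposition. The ansatz $p_n=An$ forces $A=c_2/(2-c_1+c_2)$, so the particular solution $p_n=c_2 n/(2-c_1+c_2)$ is well defined because hypothesis $(\star)$ rules out $c_1-c_2=-2$. For the homogeneous part $h(n+1)/h(n)=(n+c_1-1-c_2)/n$, iterating from $n=2$ and telescoping with the identity $\Gamma(k+1)=k\,\Gamma(k)$ yields $h(n)\propto \Gamma(n-1+c_1-c_2)/\Gamma(n)$. Writing $\nu_1(n)=p_n+B\,h(n)$ and fixing $B$ by matching the value at $n=2$ reproduces the closed form stated in the lemma (up to the routine $\Gamma$-function bookkeeping on the normalizing constant).

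For the initial condition I would note that the step $n=1\mapsto 2$ is forced: the unique ancestral leaf of type $i\in\{1,2\}$ splits deterministically into two leaves whose joint type is distributed according to $q_i^{j_1 j_2}$, so $\mbb{E}[N_1(2)]=2q_i^{11}+q_i^{12}=c_i$, which is exactly the case-split given in the statement.

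For $\nu_2(n)$ I would argue by the symmetry $1\leftrightarrow 2$: relabeling the two types swaps $q_1^{\cdot\cdot}\leftrightarrow q_2^{\cdot\cdot}$, hence replaces $c_1$ by $c_1'=2q_2^{22}+q_2^{12}$ and $c_2$ by $c_2'=2q_1^{22}+q_1^{12}$; the row-sum constraints $q_i^{11}+q_i^{12}+q_i^{22}=1$ give $c_1'=2-c_2$ and $c_2'=2-c_1$, so the difference $c_1'-c_2'=c_1-c_2$ is preserved and the initial condition is swapped accordingly. The derivation is otherwise identical. The only real obstacle here is clerical: keeping the $\Gamma$-function indices aligned when telescoping the homogeneous product and when substituting $\nu_1(2)$ to extract $B$; there is no probabilistic subtlety beyond the one-step conditioning at the start.
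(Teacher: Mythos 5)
Your proof is correct and follows essentially the same route as the paper: the paper derives the identical recurrence $\nu_1(n+1)=c_2+\frac{1}{n}(n+c_1-c_2-1)\nu_1(n)$ by first writing a recursion for the full distribution of $N_1(n)$ and differentiating its generating function at $x=1$, whereas you obtain it directly by one-step conditioning on the type of the selected leaf (a harmless simplification), and both then solve it by the same particular-plus-homogeneous decomposition with Gamma-function telescoping. One small slip in your justification: the denominator $2-c_1+c_2$ of the particular solution is protected by excluding $c_1-c_2=2$, not $c_1-c_2=-2$; the exclusion of $-2$ is what keeps the $\Gamma$-factors in the homogeneous term from hitting a pole.
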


\vspace{-5mm}
\begin{proof}
The result follows from a straightforward recursion, for any $2\le n_1\le n$, we have
\begin{eqnarray*}
\mbb{P}[N_1(n)=n_1]&=&\left(\frac{n_1q_1^{12}}{n-1}+\frac{(n-n_1-1)q_2^{22}}{n-1}\right)\mbb P[N_1(n-1)=n_1]\\
&+&\left(\frac{(n_1-1)q_1^{11}}{n-1}+\frac{(n-n_1)q_2^{12}}{n-1}\right)\mbb P[N_1(n-1)=n_1-1]\\
&+&\left(\frac{(n-n_1+1)q_2^{11}}{n-1}\right)\mbb P[N_1(n-1)=n_1-2]\\
&+&\left(\frac{(n_1+1)q_1^{22}}{n-1}\right)\mbb P[N_1(n-1)=n_1+1].\\
\end{eqnarray*}
This yields a recurrence relation for the generating function $G_n(x)=\sum_{n_1\geq0}\mbb{P}[N_1(n)=n_1]x^{n_1}$, which when differentiated and evaluated at $x=1$ results in the recurrence relation for $\nu_1(n)$
$$\nu_1(n+1)=(q_2^{12}+2q_2^{11})+\frac1n(n+q_1^{12}+2q_1^{11}-q_2^{12}-2q_2^{11}-1)\nu_1(n)$$
and solving it we obtain the claimed result.
\end{proof}

\begin{rem}\label{rem:meansspecial}
The condition $c_1-c_2\neq 2$ rules out trivial cases generating single type trees $\{q_1^{11}=1,q_2^{22}=1\}$ of only type 1 or type 2 (depending on initial type). The condition $c_1-c_2\neq -2$  rules out the unusual special case of completely alternating types $\{q_1^{22}=1, q_2^{11}=1\}$. 
However, a number of interesting cases are covered by our results, as shown at the end of this Section.
\end{rem}

\begin{prop}\label{prop:meanscherries}
Under the same conditions ($\star$) as in Lemma~\ref{lem:meanleaves}, $\forall n\geq3$, for 
$$\mu_{1}^{11}(n):=\mbb{E}[C_{1}^{11}(n)],\;\; \mu_{1}^{12}(n):=\mbb{E}[C_{1}^{12}(n)],\;\; \mu_{1}^{22}(n):=\mbb{E}[C_{1}^{22}(n)]$$
we have
\begin{eqnarray*}
\mu_{1}^{11}(n)&=&\frac{3(2-c_1+c_2)(2\mu_{1}^{11}(3)-q_1^{11}\nu_1(2))+n(n-1)(n-2)q_1^{11}c_2}{3(2-c_1+c_2)(n-1)(n-2)}-q_1^{11} C(n)\\
\mu_{1}^{12}(n)&=&\frac{3(2-c_1+c_2)(2\mu_{1}^{12}(3)-q_1^{12}\nu_1(2))+n(n-1)(n-2)q_1^{12}c_2}{3(2-c_1+c_2)(n-1)(n-2)}-q_1^{12}C(n)\\
\mu_{1}^{22}(n)&=&\frac{3(2-c_1+c_2)(2\mu_{1}^{22}(3)-q_1^{22}\nu_1(2))+n(n-1)(n-2)q_1^{22}c_2}{3(2-c_1+c_2)(n-1)(n-2)}-q_1^{22}C(n)
\end{eqnarray*}
where $\nu_1(2),c_1,c_2$ are as in Lemma~\ref{lem:meanleaves}, the constants $C(n)$ are 
$$C(n):=\frac{(2c_2-(2-c_1+c_2)\nu_1(2))\Gamma(n-1+c_1-c_2)}{(2-c_1+c_2)\Gamma(c_1-c_2+2)\Gamma(n)},$$ 
and the initial values are
$$\mu_{1}^{11}(3)= \left\{ \begin{array}{cc}(q_{1}^{11})^2+q_1^{11}q_1^{12}/2 & \mbox{ if }N_1(1)=1\\ q_{1}^{11}q_2^{11}+q_1^{11}q_2^{12}/2 &\mbox{ if }N_2(1)=1\end{array},\right.\,\mu_{1}^{12}(3)= \left\{ \begin{array}{cc}(q_1^{12})^2/2+q_1^{11}q_1^{12} & \mbox{ if }N_1(1)=1\\ q_2^{11}q_1^{12}+q_2^{12}q_1^{12}/2 &\mbox{ if }N_2(1)=1\end{array}\right.$$ 
and
$$\mu_{1}^{22}(3)= \left\{ \begin{array}{cc}q_1^{11}q_1^{22}+q_1^{12}q_1^{22}/2 & \mbox{ if }N_1(1)=1\\ q_2^{12}q_1^{22}/2+q_2^{11}q_1^{22} &\mbox{ if }N_2(1)=1\end{array}.\right.$$
Analogous formulae hold for $$\mu_{2}^{11}(n):=\mbb{E}[C_{2}^{11}(n)],\;\; \mu_{2}^{12}(n):=\mbb{E}[C_{2}^{12}(n)],\;\; \mu_{2}^{22}(n):=\mbb{E}[C_{2}^{22}(n)]$$ in which:\, probabilities $q^{j_1j_2}_1$ are replaced by $q^{j_1j_2}_2$, $\nu_1(2)$ is replaced by $\nu_2(2)=2-\nu_1(2)$,  $c_1$ and $c_2$ are replaced by $c_1'$ and $c_2'$ respectively (as in Lemma~\ref{lem:meanleaves}), the constants $C(n)$ remain the same if the initial type is interchanged, and $\mu_1^{j_1j_2}(3)$ are replaced by $\mu_2^{j_1j_2}(3)$ obtained by fully interchanging types in the formulae for $\mu_1^{j_1j_2}(3)$.
\end{prop}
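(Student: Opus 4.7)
The plan is to mimic the strategy of the proof of Lemma~\ref{lem:meanleaves}, now tracking the different cherry counts instead of the leaf counts. Going from $n-1$ to $n$ leaves, one picks a leaf $v$ uniformly at random and attaches two new leaves, whose types are drawn according to $\{q_i^{j_1j_2}\}_{j_1\le j_2}$ given $v$'s type $i$. Two observations drive the bookkeeping: (i) the vertex $v$ always becomes the branch-point of a \emph{new} cherry, of type $1j_1j_2$ with conditional probability $q_1^{j_1j_2}$ whenever $v$ has type $1$; and (ii) the only existing cherry that can be \emph{destroyed} is the one containing $v$ as one of its leaves, and since each existing cherry of type $1j_1j_2$ has exactly two leaves, the conditional probability that it disappears in this step equals $2/(n-1)$. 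Conditioning on $\mathcal{F}_{n-1}$ and taking expectations yields the first-order recurrence
\begin{equation*}
\mu_1^{j_1j_2}(n)\;=\;\frac{n-3}{n-1}\,\mu_1^{j_1j_2}(n-1)\;+\;\frac{q_1^{j_1j_2}\,\nu_1(n-1)}{n-1},\qquad n\ge 4.
\end{equation*}

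Next I would solve this recurrence by an integrating-factor trick: the substitution $M_n:=(n-1)(n-2)\mu_1^{j_1j_2}(n)$ turns it into the telescoping shift $M_n=M_{n-1}+(n-2)\,q_1^{j_1j_2}\,\nu_1(n-1)$, which sums to
\begin{equation*}
(n-1)(n-2)\,\mu_1^{j_1j_2}(n)\;=\;2\mu_1^{j_1j_2}(3)\;+\;q_1^{j_1j_2}\sum_{j=3}^{n-1}(j-1)\,\nu_1(j).
\end{equation*}
Plugging in the explicit form of $\nu_1(j)$ from Lemma~\ref{lem:meanleaves} decomposes the sum into a polynomial part handled by $\sum_{j=3}^{n-1}j(j-1)=\frac{1}{3}(n(n-1)(n-2)-6)$, which produces the $n(n-1)(n-2)q_1^{j_1j_2}c_2/[3(2-c_1+c_2)]$ contribution, and a gamma-valued tail $\sum_{j=3}^{n-1}\Gamma(j-1+c_1-c_2)/\Gamma(j-1)$ that collapses to a single $\Gamma$-ratio via the standard identity $\sum_{k=0}^{m}\Gamma(k+a+1)/\Gamma(k+1)=\Gamma(m+a+2)/[(a+1)\Gamma(m+1)]$ (the Chu--Vandermonde/hockey-stick identity in Pochhammer form). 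After dividing through by $(n-1)(n-2)$ and regrouping, the constants $2\mu_1^{j_1j_2}(3)$ and $-q_1^{j_1j_2}\nu_1(2)$ combine with the polynomial term into the first displayed fraction, while the gamma tail produces the separate $-q_1^{j_1j_2}C(n)$ correction, recovering the claimed closed form.

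The initial values $\mu_1^{j_1j_2}(3)$ are computed by direct enumeration of the two-step evolution starting from a single root of type $1$ (resp.\ of type $2$): for each possible first split one tabulates the $1/2$ probabilities of choosing each leaf at the second step and the $q_1^{\cdot\cdot}$ probabilities of the attached type pair, summing over the cases that produce a type-$1$ branch-point with leaf children of types $j_1j_2$. This reproduces the four-entry tables displayed in the statement. The analogous formulas for $\mu_2^{j_1j_2}(n)$ follow from the identical argument under the type swap $1\leftrightarrow 2$, which sends $\nu_1$ to $\nu_2$ and $(c_1,c_2)$ to $(c_1',c_2')$ while preserving the difference $c_1-c_2=c_1'-c_2'$; this keeps the structure of $C(n)$ invariant up to interchange of the initial type, and replaces $\mu_1^{j_1j_2}(3)$ by its fully type-swapped counterpart. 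The main technical hurdle will be the careful bookkeeping of shifted gamma arguments in the telescoping step, so that the remainder regroups exactly into the clean $-q_1^{j_1j_2}C(n)$ form rather than a more cumbersome expression involving a ratio $\Gamma(n-1+c_1-c_2)/\Gamma(n-2)$.
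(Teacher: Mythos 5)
Your proposal is correct and arrives at exactly the recurrence the paper uses, namely $\mu_1^{j_1j_2}(n+1)=\frac{n-2}{n}\mu_1^{j_1j_2}(n)+\frac{q_1^{j_1j_2}}{n}\nu_1(n)$, but by a genuinely lighter route. The paper first writes the full joint recursions for $f_n^{1j_1j_2}(n_1,k)=\mbb P[N_1(n)=n_1,\,C_1^{j_1j_2}(n)=k]$ (one long display per cherry type), passes to the bivariate generating functions $F_n^{1j_1j_2}(x,y)$, and only then differentiates at $x=y=1$ to extract the mean recurrence; you obtain the same recurrence directly from an expected-increment argument (the chosen leaf always roots one new cherry, of type $1j_1j_2$ with probability $N_1(n-1)q_1^{j_1j_2}/(n-1)$, and a given existing cherry is destroyed exactly when one of its two leaves is chosen, probability $2/(n-1)$). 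For the means your argument is shorter and more transparent; the paper's heavier machinery is not wasted, though, since the same generating functions are reused for the second moments in Proposition~\ref{prop:varcherries}. You also make explicit the step the paper leaves as ``solving which \dots gives the claimed formulae'': the substitution $M_n=(n-1)(n-2)\mu_1^{j_1j_2}(n)$ and the Pochhammer hockey-stick identity are precisely what is needed, and the bookkeeping hurdle you flag is real. One concrete caution: in the telescoping you must use the solution of the $\nu_1$ recurrence normalized so that it returns $\nu_1(2)$ at $n=2$, which carries $\Gamma(c_1-c_2+1)$ (not $\Gamma(c_1-c_2+2)$ as printed in Lemma~\ref{lem:meanleaves}) in the denominator of its Gamma-correction term; the summation identity then supplies the extra factor $(c_1-c_2+1)$ that produces the $\Gamma(c_1-c_2+2)$ appearing in $C(n)$, and the leftover constant combines with $-2q_1^{j_1j_2}c_2/(2-c_1+c_2)$ to give exactly $-q_1^{j_1j_2}\nu_1(2)$ in the numerator. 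With that normalization your regrouping reproduces the stated closed form, and your direct enumeration of the $n=3$ initial values and the type-swap for $\mu_2^{j_1j_2}$ match the paper.
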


\begin{proof}
Since at each step new leaves are attached in pairs, there is no need to keep track of the number of different pendants. It suffices to keep track of the number of different types of leaves and only of the cherries of the specific type we are trying to calculate.

Let $$f_n^{ij_1j_2}(n_1,k):=\mbb P[N_1(n)=n_1,C_i^{j_1j_2}(n)=k]$$ denote the joint probability function for $N_1(n), C_i^{j_1j_2}(n)$ and $$F^{ij_1j_2}_n(x,y)=\sum_{n_1\geq0,k\geq0}\mbb P[N_1(n)=n_1,K_i^{j_1j_2}(n)=k]x^{n_1}y^{k}$$ its generating function.
Using recursion arguments, for any $3\le n_1\le n, k\ge 1$, we have
\begin{eqnarray*}
f_n^{111}(n_1,k)\!\!\!&=&\!\!\!\Big(\frac{2kq_1^{11}}{n-1}+\frac{(n-n_1)q_2^{12}}{n-1}\Big)f_{n-1}^{111}(n_1-1,k)
+\Big(\frac{(n_1-2k)q_1^{12}}{n-1}+\frac{(n-n_1-1)q_2^{22}}{n-1}\Big)f_{n-1}^{111}(n_1,k)\\
&+&\!\!\!\Big(\frac{(n-n_1+1)q_2^{11}}{n-1}\Big)f_{n-1}^{111}(n_1-2,k)
+\Big(\frac{(n_1+1-2k)q_1^{22}}{n-1}\Big)f_{n-1}^{111}(n_1+1,k)\\
&+&\!\!\!\Big(\frac{(n_1-1-2(k-1))q_1^{11}}{n-1}\Big)f_{n-1}^{111}(n_1-1,k-1)
+\Big(\frac{2(k+1)q_1^{22}}{n-1}\Big)f_{n-1}^{111}(n_1+1,k+1)\\
&+&\!\!\!\Big(\frac{2(k+1)q_1^{12}}{n-1}\Big)f_{n-1}^{111}(n_1,k+1),\\
\end{eqnarray*}
for cherries of type $111$, 
\begin{eqnarray*}
f_n^{112}(n_1,k)\!\!\!&=&\!\!\!\Big(\frac{kq_1^{12}}{n-1}+\frac{(n-n_1-k-1)q_2^{22}}{n-1}\Big)f_{n-1}^{112}(n_1,k)\\
&+&\!\!\!\Big(\frac{(n_1-k-1)q_1^{11}}{n-1}+\frac{(n-n_1-k)q_2^{12}}{n-1}\Big)f_{n-1}^{112}(n_1-1,k)
+\Big(\frac{(n-n_1-k+1)q_2^{11}}{n-1}\Big)f_{n-1}^{112}(n_1-2,k)\\
&+&\!\!\!\Big(\frac{(k+1)q_2^{11}}{n-1}\Big)f_{n-1}^{112}(n_1-2,k+1)
+\Big(\frac{(k+1)q_2^{12}}{n-1}\\
&+&\!\!\!\frac{(k+1)q_1^{11}}{n-1}\Big)f_{n-1}^{112}(n_1-1,k+1)
+\Big(\frac{(k+1)q_2^{22}}{n-1}\Big)f_{n-1}^{112}(n_1,k+1)\\
&+&\!\!\!\Big(\frac{(n_1-k+1)q_1^{12}}{n-1}\Big)f_{n-1}^{112}(n_1,k-1)
+\Big(\frac{(n_1-k+1)q_1^{22}}{n-1}\Big)f_{n-1}^{112}(n_1+1,k)\\
&+&\!\!\!\Big(\frac{(k+1)q_1^{22}}{n-1}\Big)f_{n-1}^{112}(n_1+1,k+1)
\end{eqnarray*}
for cherries of type $112$, and 
\begin{eqnarray*}
f_n^{122}(n_1,k)\!\!\!&=&\!\!\!\Big(\frac{(n-n_1-2k)q_2^{12}}{n-1}+\frac{(n_1-1)q_1^{11}}{n-1}\Big) f_{n-1}^{122}(n_1-1,k)\\
&+&\!\!\!\Big(\frac{(n-n_1-2k-1)q_2^{22}}{n-1}+\frac{n_1q_1^{12}}{n-1}\Big)f_{n-1}^{122}(n_1,k)
+\Big(\frac{2(k+1)q_2^{11}}{n-1}\Big)f_{n-1}^{122}(n_1-2,k+1)\\
&+&\!\!\!\Big(\frac{2(k+1)q_2^{12}}{n-1}\Big)f_{n-1}^{122}(n_1-1,k+1)
+\Big(\frac{2(k+1)q_2^{22}}{n-1}\Big)f_{n-1}^{122}(n_1,k+1)\\
&+&\!\!\!\Big(\frac{(n_1+1)q_1^{22}}{n-1}\Big) f_{n-1}^{122}(n_1+1,k-1)
+\Big(\frac{(n-n_1-2k+1)q_2^{11}}{n-1}\Big)f_{n-1}^{122}(n_1-2,k).
\end{eqnarray*}
for cherries of type $122$. Each of these equations yields a recurrence relation for the corresponding joint generating function $F^{ij_1j_2}_n(x,y)$ by summing over $n_1$ and $k$. Differentiating and evaluating them at \mbox{$x=y=1$} then provides recurrences for the means $\mu_{i}^{j_1,j_2}(n)$ 
\begin{eqnarray*}
\mu_1^{11}(n+1)&=&\frac{n-2}{n}\mu_1^{11}(n)+\frac{q_1^{11}}{n}\nu_1(n)\\
\mu_1^{12}(n+1)&=&\frac{n-2}{n}\mu_1^{12}(n)+\frac{q_1^{12}}{n}\nu_1(n)\\
\mu_1^{22}(n+1)&=&\frac{n-2}{n}\mu_1^{22}(n)+\frac{q_1^{22}}{n}\nu_1(n)
\end{eqnarray*}
solving which, with the expression for $\nu_1(n)$ from Lemma~\ref{lem:meanleaves}, gives the claimed formulae.
\end{proof}

\begin{rem}
Simple algebra shows that the mean numbers of all cherries $\sum_{i,j_1\le j_2}\mu_1^{j_1j_2}(n)$ add up to $n/3$, corresponding to the known mean  number of cherries in a single-type ERM tree. 
\end{rem}

\begin{prop}
\label{prop:varcherries}
Assume that $c_1-c_2\notin\{-2,-1,0,1,3/2,2\}$ for $c_1,c_2$ as in Lemma~\ref{lem:meanleaves}. Then, $\forall n\geq5$, for 
$$\sigma_{1}^{11}(n):=\mbb{V}[C^{11}_1(n)], \;\; \sigma_{1}^{12}(n):=\mbb{V}[C^{12}_1(n)], \;\; \sigma_{1}^{22}(n):=\mbb{V}[C^{22}_1(n)]$$
we have
$$\sigma_{1}^{11}(n), \sigma_{1}^{12}(n), \sigma_{1}^{22}(n)\sim\mathrm{O}(n)+\mathrm{O}(n^{c_1-c_2-1})+\mathrm{O}(n^{2(c_1-c_2-1)}).$$
The same asymptotics hold for $$\sigma_{2}^{11}(n):=\mbb{V}[C_{2}^{11}(n)],\;\; \sigma_{2}^{12}(n):=\mbb{V}[C_{2}^{12}(n)],\;\; \sigma_{2}^{22}(n):=\mbb{V}[C_{2}^{22}(n)]$$  as the exponents $c_1'-c_2'=c_1-c_2$ are the same in these cases.
\end{prop}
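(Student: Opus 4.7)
The plan is to extend the generating function approach of Proposition~\ref{prop:meanscherries} by differentiating twice rather than once. Starting from the recurrences for $f_n^{ij_1j_2}(n_1,k)$ already established in that proof, one forms the joint generating function $F_n^{ij_1j_2}(x,y)$ and evaluates its second derivatives at $(x,y)=(1,1)$. Applying $\partial_y^2$ yields a linear recurrence for the factorial second moment $\mbb{E}[C_1^{j_1j_2}(n)(C_1^{j_1j_2}(n)-1)]$ whose forcing term is a linear combination of the cross moment $\mbb{E}[N_1(n)C_1^{j_1j_2}(n)]$ and of the mean $\mu_1^{j_1j_2}(n)$; applying $\partial_x\partial_y$ gives a recurrence for the cross moment $\mbb{E}[N_1(n)C_1^{j_1j_2}(n)]$ with forcing driven by $\mbb{E}[N_1(n)(N_1(n)-1)]$, $\nu_1(n)$ and $\mu_1^{j_1j_2}(n)$; and applying $\partial_x^2$ (which decouples from $y$ and reduces to the univariate recurrence from Lemma~\ref{lem:meanleaves}) yields $\mbb{E}[N_1(n)(N_1(n)-1)]$ with forcing $\nu_1(n)$. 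Together these form a triangular cascade of three coupled recurrences that can be solved in the indicated order.

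Each recurrence has the scalar form $a_{n+1}=(1+\alpha/n)\,a_n+b_n$ for an explicit constant $\alpha$ (determined by how many edges of the tree are involved, namely $\alpha\in\{c_1-c_2-1,\,c_1-c_2-2,\,-4\}$ for the three moments respectively) and with $b_n$ a combination of the quantities from earlier stages of the cascade. The general solution is obtained by variation of parameters using the Pochhammer-type product $\prod_{k=k_0}^{n-1}(1+\alpha/k)\sim n^\alpha/\Gamma(\alpha+1)$, giving closed-form expressions in terms of ratios of gamma functions exactly as in Lemma~\ref{lem:meanleaves}. Since $\nu_1(n)=O(n)+O(n^{c_1-c_2-1})$, substituting back and collecting asymptotic orders one finds $\mbb{E}[N_1(n)^2]=O(n^2)+O(n^{c_1-c_2})+O(n^{2(c_1-c_2-1)})$; feeding this into the cross-moment recurrence yields $\mbb{E}[N_1(n)C_1^{j_1j_2}(n)]$ of the same three orders, and similarly for $\mbb{E}[C_1^{j_1j_2}(n)^2]$.

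Finally, using the variance decomposition
\[
\sigma_1^{j_1j_2}(n)=\mbb{E}[C_1^{j_1j_2}(n)(C_1^{j_1j_2}(n)-1)]+\mu_1^{j_1j_2}(n)-\mu_1^{j_1j_2}(n)^2,
\]
the coefficients of the dominant $n^2$ terms cancel (as they must, for consistency with the fact that the total cherry count has mean $n/3$ and variance $2n/45$ of order $n$). What remains is a linear combination with orders $O(n)$, $O(n^{c_1-c_2-1})$ and $O(n^{2(c_1-c_2-1)})$, yielding the claimed asymptotics. The same computation with $c_1,c_2$ replaced by $c_1',c_2'$ (and initial conditions interchanged) handles the type-2 cherries and gives identical orders since $c_1'-c_2'=c_1-c_2$.

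The main obstacle is twofold. Algebraically, the bookkeeping after the second differentiation is substantial because each of the many transition coefficients appearing in the recurrences for $f_n^{ij_1j_2}(n_1,k)$ contributes to the forcing of each of the three second moments, and one must carefully track which contributions are genuine $O(n^2)$ leading terms (and therefore must cancel in the variance) versus strict subleading contributions. Analytically, the delicate point is identifying when the forcing exponent in one of the three recurrences coincides with the homogeneous exponent $\alpha$ of that recurrence, producing a resonance that would generate a logarithmic correction and change the asymptotic order: the exceptional set $\{-2,-1,0,1,3/2,2\}$ for $c_1-c_2$ corresponds precisely to those resonances (together with the two degenerate cases already ruled out in Lemma~\ref{lem:meanleaves}), and these are exactly the values that must be excluded for the stated asymptotic expansion to hold.
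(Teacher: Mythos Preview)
Your approach is essentially the paper's: differentiate the bivariate generating functions $F_n^{1j_1j_2}$ from Proposition~\ref{prop:meanscherries} twice, derive the coupled recurrences for $R_{yy}$, $R_{xy}$, and $R_{xx}$ (the paper's notation for your three second moments), solve the resulting triangular cascade, and read off the variance via $\sigma=R_{yy}+\mu-\mu^2$. One correction: the homogeneous exponents that actually appear are $\alpha=-4$ for $R_{yy}$, $\alpha=c_1-c_2-3$ for $R_{xy}$, and $\alpha=2(c_1-c_2-1)$ for $R_{xx}$ (not the values you list; note in particular that the $\partial_x^2$ moment inherits twice the leaf-count exponent, not $-4$), and the paper simply hands the resulting system to Maple rather than carrying out the asymptotic bookkeeping you sketch---but the structure, the variance identity, and the final orders are the same.
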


\begin{proof}
Using recurrence relations for the generating functions $F^{111}_n(x,y),\,F^{112}_n(x,y), F^{122}_n(x,y)$ from the proof of Proposition~\ref{prop:meanscherries}, taking second derivatives in $x,y$ and evaluating them at $x=y=1$ yields recurrence equations for the variances for the number of cherries for each of the types $111,112$ and $113$, respectively. For $j_1\le j_2\in\{1,2\}$ let 
$$R^{1j_1j_2}_{yy}(n):=\frac{\partial^2F^{1j_1j_2}_n(x,y)}{\partial y^2}\Big|_{x=1,y=1}, \quad R^{1j_1j_2}_{xy}(n):=\frac{\partial^2F^{1j_1j_2}_n(x,y)}{\partial x\partial y}\Big|_{x=1,y=1}.$$ 
From equations for $F^{1j_1j_2}_n(x,y)$ we obtain the recurrence relation for $R^{1j_1j_2}_{yy}(n), R^{1j_1j_2}_{xy}(n)$ as
$$
R^{1j_1j_2}_{yy}(n)=\frac{\Gamma(n-4)}{\Gamma(n)}\Big(\sum_{n_1=1}^{n-1}\frac{2q_1^{j_1j_2}(R^{1j_1j_1}_{xy}(n_1)-2\mu_1^{j_1j_2}(n_1))\Gamma(n_1+1)+24R^{1j_1j_2}_{yy}(5) }{n_1\Gamma(n_1-3)} \Big)
$$
The equations for $R^{1j_1j_2}_{xy}(n)$ satisfy the recurrence relations, for any $n\ge 4$
\begin{eqnarray*}
R^{111}_{xy}(n+1)\!\!\!\!\!&=&\!\!\!\!\!\frac1n\Big((2-2c_1+c_2n)\mu_1^{11}(n)+(n-3+c_1-c_2)R^{111}_{xy}(n)+2q_1^{11}\nu_1(n)+q_1^{11}R(n)\Big)\\
R^{112}_{xy}(n+1)\!\!\!\!\!&=&\!\!\!\!\!\frac1n\Big((1-c_1+c_2n-c_2)\mu_1^{12}(n)+(n-3+c_1-c_2)R^{112}_{xy}(n)+2q_1^{12}\nu_1(n)+q_1^{11}R(n)\Big)\\
R^{122}_{xy}(n+1)\!\!\!\!\!&=&\!\!\!\!\!\frac1n\Big((c_2n-2c_2)\mu_1^{22}(n)+(n-3+c_1-c_2)R^{112}_{xy}(n)+q_1^{22}R(n)\Big)
\end{eqnarray*}
where $R(n)=\frac{\partial^2 G_n}{\partial x^2}|_{x=1,y=1}$ is the second moment for the number of leaves $N_1$ of type $1$ (see Lemma~\ref{lem:meanleaves}) and satisfies the recurrence, for $n\ge 3$
$$
R(n)=\frac1n\Big(2nq_2^{11}+(2nc_2+2q_1^{11}-6q_2^{11}-2q_2^{12})\nu_1(n)+(n-2+2c_1-2c_2)R(n)\Big)
$$
From this we have the variances to be
$$
\sigma_i^{j_1j_2}(n)=R^{ij_1j_2}_{yy}(n)+\mu_i^{j_1j_2}(n)-(\mu_i^{j_2j_2}(n))^2.
$$
Explicitly solving these equations for the variances requires much more complicated calculations than for the means. Using Maple yields formulae which are quite long and cluttered. However,  expanding these formulae with respect to $n$ we obtain the asymptotic results above.
\end{proof}

\subsection{Special cases of multi-type ERM models}\label{subsec:ERMcases}
\vspace{-2mm}
To illustrate how multi-type ERM trees for different speciation models give different cherry distributions, we 
consider particular cases corresponding to specific values of $\{q_i^{j_1j_2}\}_{i,j_1\le j_2\in\{1,2\}}$:\\
\indent (a) The {\it `single type'} model is the trivial one in which the only type in the tree is the initial one:\, $q_1^{11}=q_2^{22}=1$ \;($c_1-c_2=2$);\\
\indent (b) The `{\it alternating type'} model is one in which one type can only attach to itself leaves of the other type:\, $q_1^{22}=q_2^{11}=1$ \;($c_1-c_2=-2$)\\
\indent (c) In the {\it `neutral to type'} model the branch-point type does not determine the probabilities of leaf types:\, for each $j_1\le j_2$:\, $q_i^{j_1j_2}$ is independent of whether $i=1,2$\; ($c_1-c_2=0, c_1+c_1'=2$);\\
\indent (d) In the {\it `only mixed type'} model each type has only mixed types attached to it:\, $q_i^{12}=1$ for both $i=1,2$\; ($c_1-c_2=0$);\\
\indent (e) The {\it `asymmetric change in type'} represents a model where one type can be randomly gained from the other but once gained can no longer be lost:\, $\{q_1^{11}=1, q_2^{11}=q_2^{22}=(1-q_2^{12})/2\}$ or $\{q_2^{22}=1, q_1^{11}=q_1^{22}=(1-q_1^{12})/2\}$\; ($c_1-c_2=1$);


\label{table:mean-particular}
\begin{tabular}{|l|c|c|} 
\hline & \\
{\it Single type}: & $\displaystyle\mu_1^{11}(n)=\frac{n}{3}$\, all other $\displaystyle\mu_i^{j_1j_2}=0$,\, if $N_1(1)=1$ \\  \quad\qquad\qquad $q_1^{11}=q_2^{22}=1$ ($ c_1-c_2=2$) &
\\   &	$\displaystyle\mu_2^{22}(n)=\frac{n}{3}$\, all other $\displaystyle\mu_i^{j_1j_2}=0$,\, if $N_2(1)=1$  \\
						 &\\ \hline
						 &\\	
{\it Alternating type}:  & 
\\  \quad\qquad\quad $q_1^{22}=q_2^{11}=1$ ($ c_1-c_2=-2$) & $\displaystyle\mu_1^{22}(n)=\mu_2^{11}(n)=\frac{n}{6}$,\, all other $\displaystyle\mu_i^{j_1j_2}=0$ 	 \\
						 &\\ \hline
						 &\\	
{\it Neutral to type}:    & $\mu_1^{11}(n)=\displaystyle\frac{nq_1^{11}c_1}{6}$,\;\; $\mu_2^{11}(n)=\displaystyle\frac{nq_2^{11}c_1'}{6}$, \\ &
\\ \quad\qquad $q_1^{11}=q_2^{11},\, q_1^{12}=q_2^{12},\, q_1^{22}=q_2^{22}$   & $\mu_1^{12}(n)=\displaystyle\frac{nq_1^{12}c_1}{6}$,\;\; $\mu_2^{12}(n)=\displaystyle\frac{nq_2^{12}c_1'}{6}$,\\ & \\ \,\;\qquad\qquad\qquad\qquad\qquad ($ c_1-c_2=0$) & $\mu_1^{22}(n)=\displaystyle\frac{nq_1^{22}c_1}{6}$,\;\; $\mu_2^{22}(n)=\displaystyle\frac{nq_2^{22}c_1'}{6}$\\
				 		 &   \\ \hline  
						        &\\					       
{\it Only mixed type}:  &   
\\ \;\;\qquad\qquad $q_1^{12}=q_2^{12}=1$ ($ c_1-c_2=0$) &  $\displaystyle\mu_1^{12}(n)=\mu_2^{12}(n)=\frac{n}{6}$,\, all other $\displaystyle\mu_i^{j_1j_2}=0$\\ 
    						 &   \\ \hline  
						        &\\
{\it Asymmetric change}:  & $\displaystyle\mu_1^{11}(n)=\frac{n}{3}$,\, all other $\displaystyle\mu_i^{j_1j_2}=0$,\,\\ &  \qquad\qquad\qquad\qquad\qquad\qquad\;\; if $N_1(1)=1$\\ 
\quad\qquad $q_1^{11}=1, q_2^{11}=q_2^{22}$ ($ c_1-c_2=1$) & 	 $\displaystyle\mu_1^{11}(n)=\frac{n}{3}-\frac12$,\, $\displaystyle\mu_2^{11}=\mu_2^{22}=\frac14(1-q_2^{12})$,\\ 
& \qquad\qquad\qquad\qquad$\displaystyle\mu_2^{12}=q_2^{12}$,\, if $N_2(1)=1$\\
						    &\\ \hline
\end{tabular}\\

For cases (a),(b) we could not use Proposition~\ref{prop:meanscherries} and we calculated the means directly.
The value of $c_1-c_2=c_1'-c_2'$ reflects the tendency of leaves to attach to leaves of their own type - the higher it is, the more weight is given to attaching to leaves of its own type as opposed to leaves of the opposite type (the two extreme cases are the single type and the alternating type). The sum of means for all different types of cherries coincides with the mean ($n/3$) of a (single-type) ERM tree as found by McKenzie and Steel \cite{McKSteel}.
For these cases we can get exact values for the variances of the numbers of cherries (directly from their generating functions) instead of relying only on asymptotics as in Proposition~\ref{prop:varcherries}. Note that the sum of variances for all different types of cherries only coincides with the variance ($2n/45$) of a single-type ERM tree \cite{McKSteel} in the extreme cases (a),(b) when the covariances are zero. 
\label{table:variance-particular}
\begin{tabular}{|l|c|c|} 
\hline & \\
{\it Single type}:  & $\displaystyle\sigma_1^{11}(n)=\frac{2n}{45}$,\, all other $\displaystyle\sigma_i^{j_1j_2}(n)=0$, if $N_1(1)=1$ \\ \quad\qquad\qquad $q_1^{11}=q_2^{22}=1$ ($ c_1-c_2=2$)   &
\\    &	$\displaystyle\sigma_2^{22}(n)=\frac{2n}{45}$,\, all other $\displaystyle\sigma_i^{j_1j_2}(n)=0$, if $N_2(1)=1$ \\
						 &\\ \hline
						 &\\
{\it Alternating type}:  & 
\\  \quad\qquad\quad $q_1^{22}=q_2^{11}=1$ ($ c_1-c_2=-2$) & $\displaystyle\sigma_1^{22}(n)=\sigma_2^{11}(n)=\frac{2n}{90}$,\, all other $\displaystyle\sigma_i^{j_1j_2}=0$ 	 \\
						 &\\ \hline
						 &\\	
{\it Neutral to type}:   & $\sigma_1^{11}(n)=\displaystyle\frac{nq_1^{11}(6(q_1^{11})^2+15c_1-8q_1^{11}c_1^2)}{90}$,\\ & 
\\ \quad\qquad $q_1^{11}=q_2^{11},\, q_1^{12}=q_2^{12},\, q_1^{22}=q_2^{22}$  & 	$\sigma_1^{12}(n)=\displaystyle\frac{nq_1^{12}(6q_1^{11}q_1^{12}+15c_1-8q_1^{12}c_1^2)}{90}$,\\ & \\
\,\;\qquad\qquad\qquad\qquad\qquad ($ c_1-c_2=0$)  & 	$\sigma_1^{22}(n)=\displaystyle\frac{nq_1^{22}(6q_1^{11}q_1^{22}+15c_1-8q_1^{22}c_1^2)}{90}$,\\
						       &   \\ \hline  
						       &\\
{\it Only mixed type}:  &   
\\ \;\;\qquad\qquad $q_1^{12}=q_2^{12}=1$ ($ c_1-c_2=0$) &	$\displaystyle\sigma_1^{12}(n)=\sigma_2^{12}(n)=\frac{7n}{90}$, \, all other $\displaystyle\sigma_i^{j_1j_2}(n)=0$ \\
    						       &   \\ \hline  
						       &\\				
{\it Asymmetric change} & $\displaystyle\sigma_1^{11}(n)=\frac{2n}{45}$,\, all other $\displaystyle\sigma_i^{j_1j_2}=0$,\\ &  \qquad\qquad\qquad\qquad\qquad\qquad\qquad\qquad if $N_1(1)=1$\\ & \\
\quad\qquad $q_1^{11}=1, q_2^{11}=q_2^{22}$ ($ c_1-c_2=1$) & $\displaystyle\sigma_1^{11}(n)=\frac{2n}{45}+{\rm o(n)}$, \; $\sigma_2^{11}(n)=\sigma_2^{22}(n)=\frac{1}{16}(1-q_2^{11})^3\!\!,$\\ & \\ & \qquad\qquad $\sigma_2^{12}(n)=\frac14(q_2^{11})^2(1-q_2^{11})$,\, if $N_2(1)=1$\\
						 &\\ \hline
\end{tabular}\\

\subsection{Asymptotic results for the number of cherries and pendants}\label{subsec:ERMlimits}

To consider the full structure (with correlations) of all the cherries in a multi-type ERM, we also need to keep track of the number of different pendants $L_i^j(n)$ of type $ij$ in a tree with $n$ leaves.  Let $\bm X(n)$ be a single vector representing different types of cherries and pendants\\
\begin{equation*}\label{eq:Polyavector}
\bm X(n)=(C_1^{11}(n), C_1^{12}(n), C_1^{22}(n), C_2^{22}(n), C_2^{12}(n), C_2^{11}(n), L_1^1(n), L_1^2(n), L_2^2(n), L_2^1(n))
\end{equation*}
Its asymptotic behaviour as $n\to\infty$ can be characterized in terms of a strong law. 
\begin{thm}
\label{thm:polya-coro-1}
Assume the probabilities $\{q_i^{j_1j_2}\}_{i,j_1\le j_2\in\{1,2\}}$ are such that to every cherry it is possible to eventually attach every other cherry \mbox{\rm{($\ast$)}}. Then, as ${n\rightarrow\infty}$

$$\frac{\bm X_n}{n}\mathop{\longrightarrow}\limits^{ \mbox{ a.s }} \bm v_1:=\frac{1}{3(2-c_1+c_2)}\left[\displaystyle\begin{array}{c}q_1^{11}c_2\\
						    q_1^{12}c_2\\
						   q_1^{22}c_2\\
						   q_2^{11}(2-c_1)\\
						  q_2^{12}(2-c_1)\\
						  q_2^{22}(2-c_1)\\
						  (c_1c_2)/2\\
						   (2-c_1)c_2/2\\ 
						   (2-c_1)(2-c_2)/2\\ 
						      (2-c_1)c_2/2 \end{array}\right] .$$
where $c_1:=2q_1^{11}+q_1^{12}, c_2:=2q_2^{11}+q_2^{12}$ are as in Lemma~\ref{lem:meanleaves}. 
\end{thm}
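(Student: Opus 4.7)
The plan is to recognize $\bm X(n)$ as a deterministic linear image of a generalized P\'olya urn process and then apply a classical strong law for irreducible urns, in the spirit of Janson \cite{J04}. Enlarge the state vector by assigning each leaf a color from a finite palette $\mathcal C$ that encodes its local neighborhood: the leaf's own type $t$, the type $p$ of its parent branch-point, and either the type $s$ of its sibling leaf (if the leaf sits in a cherry) or the marker $\ast$ (if the leaf is a pendant). Each step of the tree construction draws one ball uniformly from the urn, removes it, and inserts two new balls whose types are sampled from $\{q_t^{j_1 j_2}\}_{j_1 \le j_2}$; additionally, if the picked leaf was part of a cherry then the color of its (still-present) sister changes from ``cherry'' to ``pendant''. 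All these transitions are Markovian in the color counts, so the process is a generalized P\'olya urn with a well-defined mean replacement matrix $A$ on $\mathbb R^{|\mathcal C|}$. The vector $\bm X(n)$ is then a fixed linear image of the color-count vector $\bm Y(n)$: each cherry entry $C_i^{j_1 j_2}(n)$ aggregates balls of the appropriate cherry-colors, and each pendant entry $L_i^j(n)$ is simply the count of the corresponding pendant-color.

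Because every step removes one ball and inserts two, the urn is balanced and the Perron eigenvalue of $A$ equals $1$. The hypothesis $(\ast)$, that every cherry type can be reached from every other, is precisely the statement that the reachable colors form a single irreducible class for $A$; this is the irreducibility assumption required by the strong law for balanced generalized P\'olya urns (see Smythe \cite{S96} and Janson \cite{J04}). Under this assumption,
\[
\frac{\bm Y(n)}{n} \xrightarrow{\text{a.s.}} \bm u,
\]
where $\bm u$ is the unique strictly positive left $1$-eigenvector of $A$, normalized to have unit sum. Pushing forward by the linear extraction, $\bm X(n)/n$ converges a.s.\ to a deterministic vector $\bm v$.

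It remains to identify $\bm v$ with the explicit $\bm v_1$ of the statement. For the six cherry entries this is essentially already in hand: dividing the closed-form expression for $\mu_i^{j_1 j_2}(n)$ from Proposition~\ref{prop:meanscherries} by $n$ and passing to the limit produces the leading term $q_i^{j_1 j_2} c_2/(3(2-c_1+c_2))$ (resp.\ $q_i^{j_1 j_2}(2-c_1)/(3(2-c_1+c_2))$ for $i=2$), while the correction $C(n)$ is $o(n)$ since $c_1 - c_2 < 2$ under $(\ast)$; a.s.\ convergence of $\bm X(n)/n$ then upgrades this convergence of means to the claimed pointwise identification. For the four pendant entries one derives analogous recurrences for $\mbb E[L_i^j(n)]$ directly from the step dynamics and solves them; equivalently, the balance equation $\bm u A = \bm u$ at the color level, combined with the leaf-proportion limit $N_1(n)/n \to c_2/(2-c_1+c_2)$ from Lemma~\ref{lem:meanleaves}, pins down the pendant coordinates of $\bm v_1$ after a short algebraic computation.

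The main obstacle is the careful setup of the enlarged color process, because a single draw alters not only the color of the drawn ball and the colors of its two replacements but also the color of the drawn leaf's sister (a cherry-sister is converted into a pendant). Once this finer Markovian structure is in place, the strong law for irreducible balanced generalized P\'olya urns is essentially off the shelf, and the identification of the limit reduces to a finite linear calculation matching the first-moment formulas of Lemma~\ref{lem:meanleaves} and Proposition~\ref{prop:meanscherries}.
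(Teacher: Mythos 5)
Your high-level strategy --- encode the growing tree as a generalized P\'olya urn and invoke the strong law of Smythe/Janson --- is the same as the paper's. The problem is the specific urn you build. You take the balls to be individual \emph{leaves}, colored by (own type, parent type, sibling type or pendant marker), and you correctly note that a single draw must also convert the drawn leaf's cherry-sister from a cherry color to a pendant color. But that conversion is the removal of a ball of a color \emph{different} from the one drawn (together with the insertion of a differently colored ball), i.e.\ it forces $\xi_{ij}=-1$ for some $j\neq i$ (or $\xi_{ii}=-2$ when the two cherry leaves carry the same color). The strong laws you cite are proved under the tenability assumptions $\xi_{ij}\ge 0$ for all $j\neq i$ and $\xi_{ii}\ge -1$: only the drawn ball may ever be removed. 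So the theorem is not ``essentially off the shelf'' for your leaf-level urn, and the step where you apply it fails as written; you have identified the obstacle but not resolved it.

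The repair is exactly the construction the paper uses: take the balls to be the cherries and pendants themselves ($d=10$ colors for two types), give cherry-balls activity $a_i=2$ and pendant-balls activity $a_i=1$ so that the uniform choice of a leaf is reproduced, and then every draw removes only the drawn ball (a selected cherry is destroyed and replaced by a new cherry plus a pendant; a selected pendant is destroyed and replaced by a new cherry). With this choice $\xi_{ij}\ge0$ for $j\ne i$ and $\xi_{ii}\ge-1$ hold, the generating matrix $\bm A=(a_j\E(\xi_{ji}))$ has Perron eigenvalue $\lambda_1=1$ with second eigenvalue $\lambda_2=c_1-c_2-1$, and Janson's Theorem~3.21 gives $\bm X_n/n\to\bm v_1$ directly --- no pushforward through a linear extraction map and no appeal to the moment formulas is needed, since all ten coordinates (pendants included) are read off the normalized right Perron eigenvector of $\bm A$. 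Your identification of the six cherry coordinates via Proposition~\ref{prop:meanscherries} is fine as a consistency check, but the pendant coordinates, which you leave as ``a short algebraic computation,'' would still have to be derived; in the paper they come for free from the eigenvector.
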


\begin{rem}
The condition \mbox{\rm{($\ast$)}} is a form of {\it irreducibility} of the cherry state space. It can be relaxed for multi-type tree models in which certain types of cherries are not at all appearing in the tree. We get the same strong law results on a state space (the vector $\bm X$, the matrix $\bm A$) that is restricted to the set of cherries that can appear in the tree, on which the condition \mbox{\rm{($\ast$)}} holds. 
\end{rem}

\begin{proof}
The proof relies on a P\'oya urn representation of the different types of cherries and pendants: 
an \emph{extended P\'olya urn process} $(\bm X(n))_{n\geq0}$ is a Markov chain on $\mbb Z_+^d$ where the coordinates of the random vector $\bm X(n)=(X_1(n),\ldots,X_d(n))$ represent the number of balls of type $i\in\{1,\ldots,d\}$ in an urn at step $n$. The process starts at $\bm X(0)$ and at each step balls of different types are added or removed from it. Each ball type has associated to it a positive weight $a_i\ge 0, i\in\{1,\ldots,d\}$ and a random vector $\bm \xi_i=(\xi_{i1},\ldots, \xi_{il})$ taking values in  $\mbb Z_+^d$, such that:
$\xi_{ij}\geq0,\forall j\neq i\mbox{ and }\xi_{ii}\geq-1, \forall i$ as well as $\mathbb{E}(\xi_{ij}^2)<\infty$.

The weights and random vectors together characterize the distribution of the transition matrix for the Markov chain:\\
\indent (i) at each step a ball is randomly selected from the urn with the probability of selecting a ball of type $i$ proportional to its weight $a_i$, that is the probability of drawing a ball of type $i$ at time $n\geq1$ is ${a_iX_{(n-1)i}}/{\sum_j a_jX_{(n-1)j}}$;\\
\indent (ii) if a ball of type $i$ was selected, then the number of balls of different types to be added to the urn is drawn according to the distribution $\xi_{ij}, j=1,\ldots,d$.
The condition $\xi_{ii}\geq-1$ means the selected ball that is removed from the urn may or may not be replaced on that step. It is useful to assume the urn never becomes empty, $|\bm X(n)|>0, \forall n\ge0$. Let $\bm a=(a_1,\ldots, a_d)$. The generating matrix of a P\'olya urn is defined as $\bm A:=(a_j\E(\xi_{ji}))_{i,j=1}^d$, whose eigenvalues in decreasing order of real parts are denoted by $\lambda_1> \mathrm{Re}(\lambda_2)\ge \mathrm{Re}(\lambda_3) \cdots$ (Perron-Frobenius implies that $\lambda_1$ is real valued). The urn is called irreducible if, for any $i,j$, given the urn starts with a single ball of type $i$ it is eventually possible to add a ball of type $j$ to the urn.

An complete treatment of extended P\'olya urns is given in \cite{J04}. We state here only the results stated that are key for our proof. Assume the urn is such that: (a) it is irreducible; (b) $\lambda_1>0$, (c) $\lambda_1$ and $\lambda_2$ are simple eigenvalues with left and right eigenvectors $\bm u_1, \bm v_1$ and $\bm u_2, \bm v_2$ satisfying $\bm u_1 \cdot \bm v_1=\bm u_2 \cdot \bm v_2=1$ and $\bm a \cdot \bm v_1=1$; (d) $\mathrm{Re}(\lambda_2)> \mathrm{Re}(\lambda_3)$. The last condition implies that the set of eigenvectors $\lambda$ satisfying $\mathrm{Re}(\lambda)>\lambda_1/2$ consists either only of $\lambda_2$ or it is empty. Under these assumptions Theorem~ 3.21 of \cite{J04} insures that, in the limit as $n\to\infty$,
$$\frac{\bm X_n}{n}\mathop{\longrightarrow}\limits^{ \mbox{ a.s }} \bm v_1.$$

The process of constructing a multi-type ERM tree can be viewed as a P\'olya urn process: the balls of different types are all the different types of cherries and different types of pendants. For $\mathcal{K}=\{1,2\}$ we have $d= 10$, as shown in Figure 1. The ball types corresponding to any of the cherries have a weight $a_i=2$, and those corresponding to pendants have a weight $a_i=1$, as it is twice as likely to choose a cherry than a pendant when a leaf is picked uniformly at random. Careful consideration of the multi-type ERM construction rules (a cherry being selected means that a new pair of leaves is to be added to a randomly chose one of its leaves) shows that the generating matrix of this P\'olya urn process is:
$$\bm A:=\left[\begin{array}{ccccccccccc}
								-2(q_1^{12}+q_1^{22}) &  q_1^{11} &0  & 0 & q_1^{11} & 2 q_1^{11} &q_1^{11} &0 &0 & q_1^{11}  \\
                                   			2q_1^{12} & -(2-q_1^{12})& 0 & 0 &  q_1^{12} & 2q_1^{12} & q_1^{12} &0 &0 & q_1^{12}\\
				  			2q_1^{22} & q_1^{22} & -2 & 0      &  q_1^{22}& 2q_1^{22} & q_1^{22}&0&0 & q_1^{22} \\
							0 & q_2^{22} & 2q_2^{22} & -2(q_2^{11}+q_2^{12})& q_2^{22}& 0 & 0& q_2^{22}&q_2^{22}&0\\
							0 & q_2^{12} & 2q_2^{12} & 2q_2^{12}  & -(2-q_2^{12}) & 0 & 0& q_2^{12}&q_2^{12} &0\\
							0& q_2^{11} & 2q_2^{11} &2 q_2^{11} & q_2^{11} & -2 & 0 & q_2^{11}&q_2^{11}&0\\
							2 & 1 & 0 & 0 & 0 & 0 & -1 &0 & 0  &0\\
							0 & 1 & 2 & 0 & 0 & 0 & 0 &-1 & 0  &0\\
							0 & 0  &0 & 2 & 1 & 0  &0& 0&-1 &0\\
							0 & 0  &0 & 0 & 1 & 2  &0& 0&0 &-1\\\end{array}\right]$$
whose eigenvalues can be shown to be: $\lambda_1=1$, $\lambda_2=c_1-c_2-1=2q_1^{11}+q_1^{12}-2q_2^{11}-q_2^{12}-1$, $\lambda_3=\lambda_4=-1$ and $\lambda_5=\cdots=\lambda_{10}=-2$. 
The  normalized right and left eigenvectors of the largest real eigenvalue can be calculated in terms of the multi-type probabilities to be 
$$\bm v_1=\frac{1}{3(2-c_1+c_2)}\left[\displaystyle\begin{array}{c}
							   q_1^{11}c_2\\
						   q_1^{12}c_2\\
						   q_1^{22}c_2\\
						   q_2^{22}(2-c_1)\\
						   q_2^{12}(2-c_1)\\
						   q_2^{11}(2-c_1)\\
						   c_1c_2/2\\
						   (2-c_1)c_2/2\\ 
						   (2-c_1)(2-c_2)/2\\ 
						   (2-c_1)c_2/2 \end{array}\right],
						   \quad \bm u_1 = \left[\displaystyle\begin{array}{c} 2\\ 2\\ 2\\2\\ 2\\ 2\\ 1\\ 1\\ 1\\1\\ \end{array}\right]$$
If we assume that $q_1^{11}, q_2^{22}\neq1$, this excludes the case when the generated ERM tree is of a single type only, the urn process is irreducible, and also $\lambda_2<1$ is a simple eigenvalue. 
As all the assumptions are satisfied, applying the Theorem for P\'olya urns we obtain the claimed results.
\end{proof}

\begin{rem}
This agrees with our earlier result from Proposition~\ref{prop:meanscherries} as the means of the number of cherries obtained earlier in fact satisfy $\mbb{E}[\bm X_i(n)]/n\to \bm v_{1i}$, for $i=1,\ldots, 6$ as $n\to\infty$. Also, by restricting the state space to possible types of cherries the strong law result can be used on all of our special cases of multi-type ERM models, and compared to the means calculated for finite $n$, except for the case of `asymmetric change' in which type 111 is a sink for the process.
\end{rem}

A central limit law for its (normalized) asymptotic distribution holds as well.
\begin{thm}
\label{thm:polya-coro-2}
Assume $\{q_i^{j_1j_2}\}_{i,j_1\le j_2\in\{1,2\}}$ satisfy $q_1^{11}, q_2^{22}\neq1$, \mbox{and $c_1-c_2\neq 0$}. Then,\\ 
{\bf (i)} If $c_1-c_2=3/2$, as $n\rightarrow\infty$, 
$$\frac{\bm X_n-n\bm v_1}{n\ln(n)}\mathop{\Rightarrow}^{d}N(0,\bm \Sigma),$$
with
$$\bm \Sigma=C\left[\begin{array}{cccccccccc}(q_1^{11})^2&q_1^{11}q_1^{12}&q_1^{11}q_1^{22}&-q_1^{11}q_2^{22} &-q_1^{11}q_2^{12} &-q_1^{11}q_2^{11} &*&*&*&*\\												       \checkmark & (q_1^{12})^2& q_1^{12}q_1^{22}&-q_1^{12}q_2^{22} &-q_1^{12}q_2^{12} &-q_1^{12}q_2^{11} & * & *& * &*\\											       \checkmark&\checkmark& (q_1^{22})^2&-q_1^{22}q_2^{22} &-q_1^{22}q_2^{12} &-q_1^{12}q_2^{11}&*&*&*&*\\
												\checkmark&\checkmark&\checkmark& (q_2^{22})^2& q_2^{22}q_2^{12}& q_2^{22}q_2^{11}&*&*&*&*\\
												 \checkmark&\checkmark&\checkmark& \checkmark&(q_2^{12})^2& q_2^{12}q_2^{11}&*&*&*&*\\
												  \checkmark&\checkmark&\checkmark& \checkmark&\checkmark&(q_2^{11})^2&*&*&*&*\\												  					           *&*&*&*&*&*&*&*&*&*\\
												   *&*&*&*&*&*&*&*&*&*\\
												    *&*&*&*&*&*&*&*&*&*\\
												   *&*&*&*&*&*&*&*&*&*\\\end{array}\right]$$
where the constant $C$is given by 
$$C:=-8(9+12(q_1^{11})^2+2q_2^{11}q_1^{12}+4q_1^{11}q_2^{11}+4(q_1^{12})^2+14q_1^{11}q_1^{12}-4q_2^{11}-12q_1^{12}-21q_1^{11})/25;$$ the explicit expressions for entries marked by $*$ are omitted as they represent the covariances between cherries and the pendants; and expressions for the entries marked by a $\checkmark$ are omitted because they are follow from the symmetry of the covariance matrix.\\
{\bf (ii)} If $c_1-c_2<3/2$, as $n\rightarrow\infty$, 
$$\frac{\bm X_n-n\bm v_1}{\sqrt{n}}\mathop{\Rightarrow}^{d} N(0,\bm \Sigma').$$
where $\bm \Sigma'$ can be obtained explicitly only in some special cases.
\end{thm}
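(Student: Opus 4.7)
The plan is to invoke Janson's central limit theorem for extended P\'olya urns (\cite{J04}, Theorem~3.22) using the urn representation already set up in the proof of Theorem~\ref{thm:polya-coro-1}. Recall that the generating matrix $\bm A$ has eigenvalues $\lambda_1=1$, $\lambda_2=c_1-c_2-1$, $\lambda_{3,4}=-1$, and $\lambda_{5,\ldots,10}=-2$. The critical threshold in Janson's theorem is whether $\mathrm{Re}(\lambda_2)$ exceeds, equals, or falls below $\lambda_1/2=1/2$; here this is exactly the dichotomy $c_1-c_2\gtrless 3/2$. When $c_1-c_2=3/2$ we sit at the boundary and the normalization picks up the logarithmic factor (case (i), with rate $\sqrt{n\ln n}$), while for $c_1-c_2<3/2$ we get the usual diffusive $\sqrt{n}$-rate (case (ii)).

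First I would verify the hypotheses of Janson's theorem: irreducibility of the urn (established from condition $(\ast)$ together with $q_1^{11},q_2^{22}\ne 1$ as in Theorem~\ref{thm:polya-coro-1}), boundedness of the increments $\bm\xi_i$ (hence finite second moments), simplicity of $\lambda_1=1$ (by Perron-Frobenius, and of $\lambda_2$ since it is strictly separated from the rest of the spectrum), and the genericity condition $c_1-c_2\ne 0$, which ensures $\lambda_2\ne\lambda_1$ and keeps the relevant eigenspaces one-dimensional. These put us squarely in the scope of Janson's CLT in both regimes.

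For $c_1-c_2<3/2$, Janson's formula gives the limiting covariance as the convergent integral
$$\bm\Sigma' \;=\; \int_0^\infty e^{-s}\, P_I\, e^{s\bm A}\, \bm B\, e^{s\bm A^\top}\, P_I^\top \, ds,$$
where $\bm B=\sum_i v_{1,i}\, a_i\,\mathrm{Cov}(\bm\xi_i)$ is the stationary increment covariance and $P_I$ is the spectral projector onto the invariant subspace complementary to $\bm v_1$. For generic $\{q_i^{j_1j_2}\}$ this integral does not admit a closed form, matching the qualification in the statement. In the critical case $c_1-c_2=3/2$ the same integrand diverges logarithmically along the $\lambda_2=1/2$ direction; after the $\sqrt{n\ln n}$ rescaling, Janson's theorem isolates only a rank-one contribution along $\bm v_2$, namely $\bm\Sigma=c\,\bm v_2\bm v_2^\top$, where $\bm v_2$ is the right eigenvector of $\bm A$ at $\lambda_2$ and $c=\bm u_2^\top\bm B\bm u_2$ is the scalar determined by the matching left eigenvector $\bm u_2$.

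The main remaining task is the explicit computation of $\bm v_2$, $\bm u_2$, and $\bm B$. Direct eigenanalysis of the cherry block of $\bm A$ shows that the cherry coordinates of $\bm v_2$ are proportional to $(q_1^{11},q_1^{12},q_1^{22},-q_2^{22},-q_2^{12},-q_2^{11})^\top$, which immediately explains the outer-product structure $(\bm\Sigma)_{ij}\propto v_{2,i}v_{2,j}$ and the sign flips between the ``type $1$'' and ``type $2$'' cherry entries displayed in the statement. The constant $C$ then arises as the normalized value of $\bm u_2^\top\bm B\bm u_2$. Reducing $C$ to the stated polynomial in the $q_i^{j_1j_2}$ is lengthy but routine algebraic simplification (most cleanly performed symbolically, as for the variances in Proposition~\ref{prop:varcherries}); this bookkeeping --- computing $\bm u_2$ from $\bm A$, assembling $\bm B$ from the ten increment distributions, and collecting terms --- is the main obstacle. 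The entries marked $*$, corresponding to cherry-pendant and pendant-pendant covariances, arise from the pendant block of $\bm v_2$ by exactly the same rank-one mechanism, and the checkmarks are filled in by symmetry.
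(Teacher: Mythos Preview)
Your approach is the paper's: apply Janson's limit theorems (Theorems~3.22 and~3.23 of \cite{J04}) to the urn built in the proof of Theorem~\ref{thm:polya-coro-1}, splitting on whether $\lambda_2=c_1-c_2-1$ equals or falls below $\lambda_1/2=1/2$, and then computing the covariance from the eigenvectors of $\bm A$ and the second-moment matrix $\bm B$.

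Two details to correct. First, the cherry coordinates of the right eigenvector $\bm v_2$ are (up to a scalar) $(q_1^{11},q_1^{12},q_1^{22},q_2^{22},q_2^{12},q_2^{11})$, \emph{all with the same sign}; the paper computes $\bm v_2$ explicitly and there is no sign flip between the type-$1$ and type-$2$ cherry blocks. The alternating signs you read off from $\bm\Sigma$ do not come from $\bm v_2$ alone: in the critical case the paper's covariance is $\bm\Sigma=(\bm I-\bm T)\bm\Sigma_{II}(\bm I-\bm T)^{\mathsf T}$ with $\bm T=\lambda_2^{-1}\lambda_1\bm v_1\bm a^{\mathsf T}\bm v_2\bm u_2^{\mathsf T}$ and $\bm\Sigma_{II}=\bm v_2\bm u_2^{\mathsf T}\bm B(\bm v_2\bm u_2^{\mathsf T})^{\mathsf T}$, so the rank-one direction is $(\bm I-\bm T)\bm v_2=\bm v_2-(\lambda_1/\lambda_2)(\bm a\!\cdot\!\bm v_2)\,\bm v_1$, and it is this subtraction of the $\bm v_1$-component that produces the opposite signs between the two cherry blocks. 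Your simplified formula $\bm\Sigma=c\,\bm v_2\bm v_2^{\mathsf T}$ misses this correction. Second, $\bm B=\sum_i v_{1,i}a_i\,\mathbb E(\bm\xi_i\bm\xi_i^{\mathsf T})$, not $\mathrm{Cov}(\bm\xi_i)$; the paper works with raw second moments. With these fixes your plan matches the paper's proof, including the deferral of the case-(ii) matrix $\bm\Sigma'$ (given by Janson's integral formula involving $\psi(s,\bm A)=e^{s\bm A}-\lambda_1\bm v_1\bm a^{\mathsf T}\int_0^s e^{t\bm A}\,dt$) to special cases.
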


\begin{proof} The proof again relies on the corresponding result for our specific P\'olya urn described in the proof of Theorem~\ref{thm:polya-coro-1}: if we assume all the conditions there plus $\mathrm{Re}\lambda_2\le \lambda_1/2$, Theorems~3.22 and~3.23 of \cite{J04} insure that, as $n\to\infty$:\\
(i) if $\mathrm{Re}(\lambda_2)= \lambda_1/2$, then 
$$\frac{\bm X_n-n\lambda_1\bm v_1}{n\ln(n)}\mathop{\Rightarrow}^{d} N(0,\bm \Sigma),$$
where the covariance matrix is given by\;
$\bm \Sigma=(\bm I-\bm T)\bm \Sigma_{II}(\bm I-\bm T^\mathsf{T}),$
with \;$ \bm T:=\lambda_2^{-1}\lambda_1\bm v_1\bm a^\mathsf{T}\bm  v_2\bm u_2^\mathsf{T}$,\;\; $\bm \Sigma_{II}:=\bm v_2\bm u_2^\mathsf{T}\bm B(\bm v_2\bm u_2^\mathsf{T})$, \;and\;  $\bm B:=\displaystyle\sum_{i=1}^l\bm v_{1i}a_i\mathbb E(\bm\xi_i\bm\xi_i^\mathsf{T})$;\\
(ii) if $\mathrm{Re}(\lambda_2)< \lambda_1/2$, then 
$$\frac{\bm X_n-n\lambda_1\bm v_1}{\sqrt{n}}\mathop{\Rightarrow}^{d} N(0,\bm \Sigma').$$
where the covariance matrix is given by\;\;
$\bm \Sigma':=\displaystyle\int_0^\infty \psi(s,\bm A)\bm B\psi(s,\bm A)^\mathsf{T}\e^{-\lambda_1s}\lambda_1ds-\lambda_1^2\bm v_1\bm v_1^\mathsf{T},$ with $\bm B$ as above and 
\mbox{$\psi(s,\bm A):=\displaystyle \e^{s\bm A}-\lambda_1\bm v_1\bm a^\mathsf{T}\displaystyle \int_0^s \e^{t\bm A}dt.$}\\

The two options on the eigenvalues correspond to: \mbox{(i) $c_1-c_2=3/2$, and (ii) $c_1-c_2<3/2$}, respectively. To explicitly calculate the covariance matrix $\bm \Sigma$ in (i) we need to find the normalized right and left eigenvectors corresponding to the second largest eigenvalue $\lambda_2$, which are given in term of the multi-type probabilities as
$$\bm v_2=\frac{c_2}{(2-c_1+c_2)(c_2-c_1-1)}\left[\displaystyle\begin{array}{c}q_1^{11}\\
						    q_1^{12}\\
						   q_1^{22}\\
						   q_2^{22}\\
						  q_2^{12}\\
						  q_2^{11}\\
						  c_1/(c_1-c_2)\\
						   (2-c_1)/(c_1-c_2)\\ 
						   -(2-c_1)/(c_1-c_2)\\\ 
						      c_2/(c_1-c_2)\\ \end{array}\right],\quad \bm u_2=\frac{1}{c_2}\left[\displaystyle\begin{array}{c}-2(2-c_1)\\
						    c_1+c_2-2\\
						   2\\
						   2\\
						  c_1+c_2-2\\
						  -2(2-c_1)\\
						  c_1-2\\
						   1\\ 
						   1\\ 
						    c_1-2\\ \end{array}\right].$$
Computing the matrix $\bm B=(c_1-c_2-2)^{-1}[b_{i,j}]_{1\le i,j\le 10}$ gives lengthy expression for its entries
\begin{eqnarray*}
b_{1,1}\!\!\!&=&\!\!\!-\frac{q_1^{11}}{3}(10q_2^{11}-8q_1^{11}q_2^{11}+5q_2^{12}-4q_1^{11}q_2^{12}), \;
b_{1,2}=q_1^{11}(2q_2^{11}+q_2^{12})q_1^{12}, \\
b_{1,3}\!\!\!&=&\!\!\!-\frac{2q_1^{11}}{3}(2q_2^{11}+q_2^{12})(-1+q_1^{11}+q_1^{12}),\;
b_{1,4}=-\frac{q_1^{11}}{6}(2q_2^{11}+q_2^{12})(-4+2q_1^{11}-q_1^{12}),\\
b_{1,5}\!\!\!&=&\!\!\!-\frac{q_1^{11}}{3}(2q_2^{11}+q_2^{12})q_1^{12},\;  b_{16}=0,\\ 
b_{1,7}\!\!\!&=&\!\!\!-\frac{q_1^{11}}{3}(-2+2q_1^{11}+q_1^{12})q_2^{12},\;
b_{1,8}=-\frac{2q_1^{11}}{3}(-2+2q_1^{11}+q_1^{12})q_2^{11}, \\ 
b_{1,9}\!\!\!&=&\!\!\!\frac{q_1^{11}}{6}(-2+2q_1^{11}+q_1^{12})(2q_2^{11}-q_2^{12}),\;
b_{1,10}=\frac{q_1^{11}}{3}(-2+2q_1^{11}+q_1^{12})q_2^{12}\\
b_{2,1}\!\!\!&=&\!\!\!q_1^{12}(2q_2^{11}+q_2^{12})q_1^{11}, \;
b_{2,2}=-\frac{q_1^{12}}{3}(10q_2^{11}+5q_2^{12}-4q_1^{12}q_2^{11}-2q_1^{12}q_2^{12})\\
b_{2,3}\!\!\!&=&\!\!\!-\frac{q_1^{12}}{3}(2q_2^{11}+q_2^{12})(-1+q_1^{11}+q_1^{12}), \;
b_{2,4}=-\frac{q_1^{12}}{3}(2q_2^{11}+q_2^{12})(2q_1^{11}-2-q_1^{12}),\\
 &\cdots&\\
 b_{10, 7}\!\!\!&=&\!\!\! -\frac{q_2^{12}}{6}(-2+2q_1^{11}+q_1^{12})(2q_2^{11}+3q_2^{12}), \;
 b_{10, 8}=-\frac{q_2^{11}}{6}(-2+2q_1^{11}+q_1^{12})(-2+2q_2^{11}+3q_2^{12})\\
b_{10,9}\!\!\!&=&\!\!\!0,\; q_{10,10}=-\frac{1}{2}(-2+2q_1^{22}+q_1^{12})(q_2^{12}-2+2q_2^{11})
\end{eqnarray*}
Further lengthy and cumbersome linear algebra (computed using Maple) provides the given entries for the variances and covariances of different types of cherries in $\bm \Sigma$ as claimed. 

Calculating the matrix $\bm \Sigma'$ in (ii) is even more involved, due to its integral expressions, and can not be made to simplify other than in some very special cases.\\
\end{proof}

\begin{rem}
The results above are consistent with our calculations of asymptotics for the variances of the number of cherries in Proposition~\ref{prop:varcherries}. When $c_1-c_2<3/2$ implies $c_1-c_2-1<1/2$ and $2(c_1-c_2-1)<1$, and the individual variances are $\rm{O}(n)$. When $c_1-c_2=3/2$ the additional factor $\ln{n}$ comes from covariances in numbers of different types of cherries.
\end{rem}

The asymptotic strong law allows us to approximate unknown multi-type probabilities for ERM trees with a large number of leaves using counts of different types of cherries on the tree.

\begin{cor}\label{cor:ERMinferrence}
If the proportion of different types of cherries in a multi-type ERM tree is given by ${\bm x_n}=(\bm X_{n1}/n, \ldots, \bm X_{n6}/n)$
and the number of leaves $n$ in the tree is large, one can approximately recover the multi-type probabilities of the model to be 
$$q_1^{11}=\frac{\bm x_{1}}{\bm x_{1}+\bm x_{2}+\bm x_{3}},\, q_1^{12}=\frac{\bm x_{2}}{\bm x_{1}+\bm x_{2}+\bm x_{3}},\, q_1^{22}=\frac{\bm x_{3}}{\bm x_{1}+\bm x_{2}+\bm x_{3}},$$
$$q_2^{22}=\frac{\bm x_{4}}{\bm x_{4}+\bm x_{5}+\bm x_{6}},\, q_1^{12}=\frac{\bm x_{5}}{\bm x_{4}+\bm x_{5}+\bm x_{6}},\, q_1^{22}=\frac{\bm x_{6}}{\bm x_{4}+\bm x_{5}+\bm x_{6}},$$
as long as the total number of cherries with branch-point of type 1 and of type 2 are non-zero.
\end{cor}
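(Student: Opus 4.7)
The plan is essentially a direct inversion of the limit vector $\bm v_1$ supplied by Theorem~\ref{thm:polya-coro-1}. By that theorem, under the stated assumption ($\ast$), the first six components of $\bm X_n/n$ converge almost surely to
\[
\bm v_{1,1}=\tfrac{q_1^{11}c_2}{3(2-c_1+c_2)},\;\; \bm v_{1,2}=\tfrac{q_1^{12}c_2}{3(2-c_1+c_2)},\;\; \bm v_{1,3}=\tfrac{q_1^{22}c_2}{3(2-c_1+c_2)},
\]
and analogously for the three type-$2$ branch-point components with factor $(2-c_1)/[3(2-c_1+c_2)]$. The key observation is that within each block of three components (fixed branch-point type) the unknown common prefactor depends only on $c_1,c_2$ and not on the individual $q$'s, so it cancels when one normalizes by the sum of the three entries.

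First I would invoke Theorem~\ref{thm:polya-coro-1} to assert $\bm x_n \to \bm v_1$ a.s. as $n\to\infty$. Next I would compute, for the first block,
\[
\bm v_{1,1}+\bm v_{1,2}+\bm v_{1,3}=\frac{(q_1^{11}+q_1^{12}+q_1^{22})\,c_2}{3(2-c_1+c_2)}=\frac{c_2}{3(2-c_1+c_2)},
\]
using the fact that the probabilities $\{q_1^{j_1j_2}\}_{j_1\le j_2}$ sum to one. Therefore the ratio $\bm v_{1,i}/(\bm v_{1,1}+\bm v_{1,2}+\bm v_{1,3})$ equals $q_1^{11}$, $q_1^{12}$, $q_1^{22}$ for $i=1,2,3$ respectively. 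The same calculation applied to the second block yields $q_2^{22}, q_2^{12}, q_2^{11}$ as the corresponding normalized ratios of $\bm v_{1,4},\bm v_{1,5},\bm v_{1,6}$. Combining with the a.s.\ convergence of $\bm x_n$ and the continuous mapping theorem applied to the ratio map, the claimed approximations follow for large $n$.

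The only subtle point—and what the ``non-zero branch-point counts'' hypothesis takes care of—is that to pass from $\bm x_n\to \bm v_1$ to convergence of ratios, the denominators must stay bounded away from $0$. Under ($\ast$) the limiting sums $c_2/[3(2-c_1+c_2)]$ and $(2-c_1)/[3(2-c_1+c_2)]$ are strictly positive (since $c_1\ne 2$ means not all children of type-$1$ parents are of type $1$, and symmetrically for $c_2\ne 0$), so for $n$ large enough almost surely the totals of cherries with branch-point of type $1$ and of type $2$ are positive. Assuming these totals are non-zero in the observed data is precisely the hypothesis stated in the corollary, so the inversion is well defined. No further estimates are needed; the argument is purely the continuous mapping theorem applied to $(y_1,\dots,y_6)\mapsto (y_i/(y_1+y_2+y_3),\,y_j/(y_4+y_5+y_6))$ at the point $\bm v_1$.
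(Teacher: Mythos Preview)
Your argument is correct and is exactly the approach the paper takes: the paper does not give a formal proof of this corollary at all, merely remarking that it is ``completely intuitive from a law of large numbers perspective'' via Theorem~\ref{thm:polya-coro-1}. Your write-up supplies the details the paper omits---computing the block sums of $\bm v_1$ using $\sum_{j_1\le j_2} q_i^{j_1j_2}=1$ and invoking the continuous mapping theorem for the ratio map---so it is a strictly more complete version of the same idea.
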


This result is completely intuitive from a law of large numbers perspective: the multi-type probabilities for having a branch-point of type $ij_1j_2$ are given by the limiting fraction of cherries of type $ij_1j_2$. Our results on the variability of the number of cherries allows one to make a more precise statement about the error one is making using such an approximation when the number of leaves is finite.

In the standard Markov propagation model on trees the probabilities for the types of two leaves attaching to the same branch-point are independent. These are given by a stochastic transition matrix $\bm S=[s_{ij}]_{i,j\in\{1,\ldots,k\}}$ where $s_{ij}$ is the probability that a leaf of type $j$ will attach to a type $i$. In our notation this gives probabilities  $q_i^{j_1j_2}=2s_{ij_1}s_{ij_2}, \; j_1<j_2$ and $q_i^{jj}=s_{ij}^2$. Reconstruction of types for Markov propagation models has been extensively studied (see \cite{MossSteel1}). We only illustrate how the information on cherries can be used as a proxy to determine whether robust reconstruction is possible or not. Without going into all the details we recall that `reconstruction problem is solvable' if there exist two different types which when used at the root of the tree propagate asymptotically different  distributions (measured by total variation) on the leaves of the tree. This 
 roughly means that the leaf types contain a non-vanishing amount of information on the type of the root of the tree as the number of leaves $n\rightarrow\infty$. A key result (\cite{MosselS2}) then states that  on a binary tree the reconstruction problem is solvable when $\lambda_2>1/\sqrt{2}$, where $\lambda_2$ is the second largest eigenvalue of the propagation matrix $\bm S$. When $k=2$ this condition becomes $|s_{11}+s_{22}-1|>1/\sqrt{2}$ which, using Corollary~\ref{cor:ERMinferrence}, is equivalent to 
 $$\big|\sqrt{v_{1,1}/(v_{1,1}+v_{1,2}+v_{1,3})}+\sqrt{v_{1,4}/(v_{1,4}+v_{1,5}+v_{1,6})}-1\big|>1/\sqrt{2}.$$

\

\section{Ancestral tree of a multi-type birth-death process}

Consider a random tree with edge lengths, constructed from an originating node, using a pure birth process. By rescaling time one can relate any such tree to one whose birth rate is 1, even when the rate is time varying. The distribution of this tree is called 'Yule' tree (first considered in the biological context by \cite{Yule}), and has been used extensively as a null model in investigating speciation process. This is due to the fact that its distribution is precisely that of the ancestral tree reconstructed from any birth-death branching process with constant rates (\cite{Neeetal}) - an {\it ancestral tree} is obtained from a full tree of the process by pruning away all the branches without any extant species.
When the branch lengths of a Yule tree are ignored (given the same length) this produces the uniform distribution on ranked tree shapes (a {\it ranked tree} is one in which the order of branching events matters) with labelled tips, and when the ranking is also ignored it produces the (single-type) ERM distribution on binary trees (\cite{Aldous1}).

We consider a multi-type version of this tree obtained as the ancestral tree reconstructed  from a multi-type birth-death process.  
Let $\bm Z=(\bm Z(t))_{t\ge 0}$ denote a multi-type birth-death process on $\mathcal{K}=\{1,\dots,k\}$ types, whose coordinates provide the count of different types in the population $\bm Z(t)=(Z_1(t),\dots, Z_k(t))$. Let $T>0$ and let $\mathcal Z$ denote the full tree of $(\bm Z(t))_{0\le t\le T}$. Let $\mathcal W$ denote the ancestral tree obtained by pruning away all lineages of $\mathcal Z$ which do not have any extant lineages at time  $T$ (the law of $\mathcal W$ depends on $T$ but for simplicity we omit $T$ from its notation). An illustration of an ancestral tree associated with a multi-type birth-death process is shown in Figure~\ref{fig:birthdeathtrees}. Let $\bm W=(\bm W(t))_{0\le t\le T}, \bm W(t)=(W_1(t),\dots, W_k(t))$ denote the population size process of the ancestral tree $\mathcal W$ (clearly we have $\forall i$, $\forall t\in[0,T]$: $W_i(t)\le Z_i(t)$). We call $\bm W$ the {\it reconstructed ancestral process} of $\bm Z$ and derive its law, which turns out to be a multi-type pure birth process with time varying rates and an added ability to switch types along a single lineage.

\begin{figure}
\centering
{\includegraphics[width=15cm]{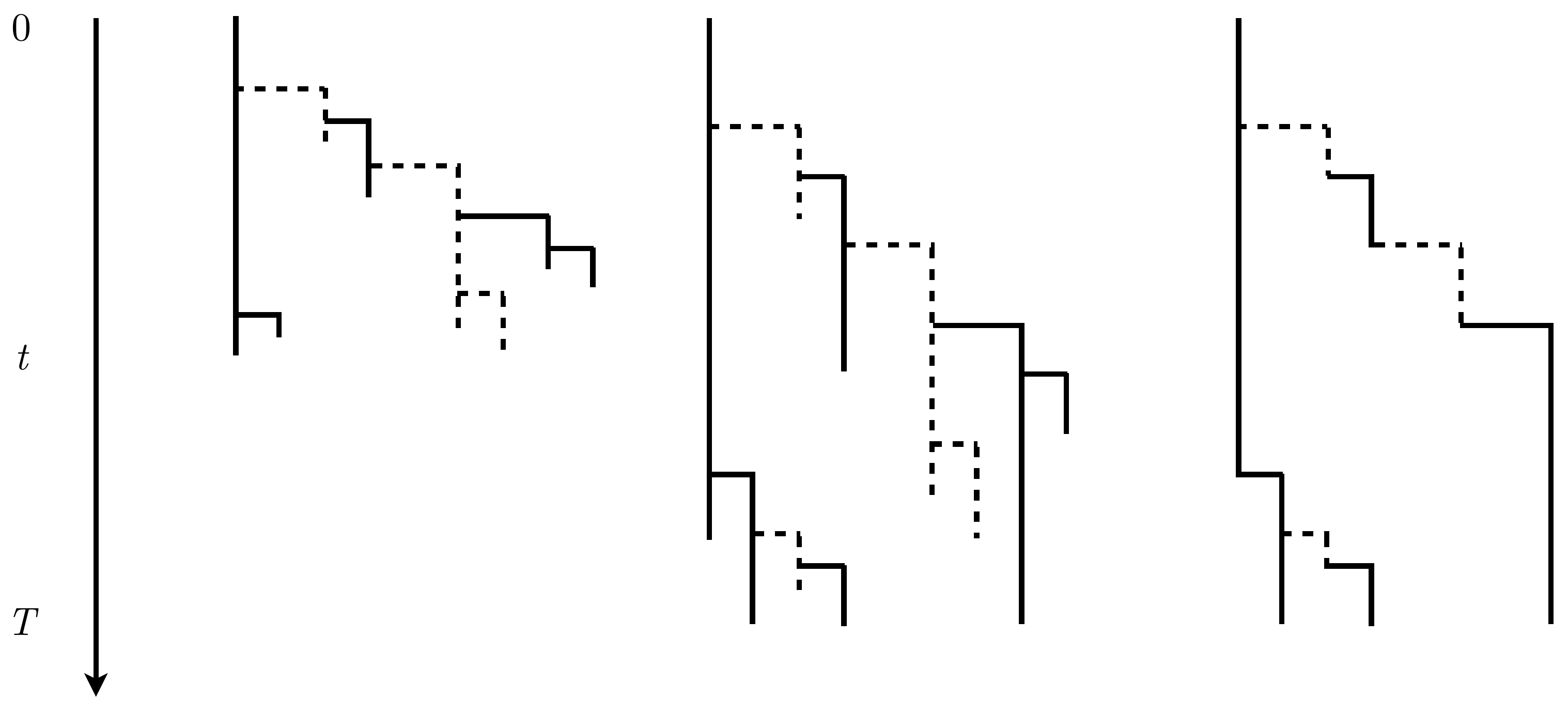}}\\
\caption{(from left to right) A tree of a two-type birth-death process; the tree of the same birth-death process until time $T$; the ancestral tree associated with the  process surviving to $T$.}\label{fig:birthdeathtrees}
\end{figure}

\begin{lem}\label{lem:ancestral-markovian}
The reconstructed ancestral process $\bm W$ of $\bm Z$ is a Markov process.
\end{lem}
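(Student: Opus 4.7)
The plan is to derive the Markov property of $\bm W$ from the Markov/branching property of the underlying process $\bm Z$ together with the single crucial observation that the pruning event ``has a descendant alive at $T$'' is measurable with respect to the forward subtree of an individual lineage. Thus, although the construction of $\mathcal W$ uses information from the whole of $[0,T]$, the dependence on the future decouples along the independent subtrees rooted at the lineages alive in $\bm Z$ at time $s$.

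First I would identify what information is contained in the past $\mathcal F^{\bm W}_s:=\sigma(\bm W(u):0\le u\le s)$. The restriction of $\mathcal W$ to $[0,s]$ is obtained from $\mathcal Z|_{[0,s]}$ by keeping only those lineages $\ell$ alive at time $s$ for which the survival indicator $S_\ell:=\mathbf{1}_{\{\ell\text{ has a descendant at }T\}}$ equals $1$ (since any lineage in $\mathcal W$ alive before $s$ has, by construction, such a descendant in its subtree). Hence $\mathcal F^{\bm W}_s\subseteq\sigma\bigl(\mathcal Z|_{[0,s]},\{S_\ell\}_{\ell\text{ alive at }s}\bigr)$, and the only piece of this information relevant to the forward evolution past $s$ is the multiset of types of the surviving lineages, which is exactly $\bm W(s)$.

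Next, I would invoke the branching property of $\bm Z$ at time $s$: conditionally on the types of the $|\bm Z(s)|$ lineages alive at $s$, the forward subtrees $(\mathcal Z^\ell)_\ell$ are independent, with $\mathcal Z^\ell$ distributed as the process started from a single lineage of type $\tau(\ell)$ and run for time $T-s$. Since both $S_\ell$ and the pruned forward subtree $\mathcal W^\ell$ are functionals of $\mathcal Z^\ell$ alone, the pairs $(S_\ell,\mathcal W^\ell)$ are independent and the conditional law of $(S_\ell,\mathcal W^\ell)$ given $\{S_\ell=1\}$ depends only on $\tau(\ell)$ and $T-s$. Consequently, restricted to $[s,T]$, $\bm W$ decomposes as a superposition of $\sum_i W_i(s)$ independent pruned subtrees, the ones rooted at a type-$i$ lineage following a common conditional law $\mathcal L_i^{T-s}$ determined only by $i$ and $T-s$.

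Combining the two observations gives, for every $s<t\le T$ and every bounded measurable $f$,
\[
\EE\bigl[f(\bm W(t))\mid \mathcal F^{\bm W}_s\bigr]=g(\bm W(s);s,t,T),
\]
which is the (time-inhomogeneous) Markov property. The main obstacle I foresee is the first step: rigorously justifying that no hidden information from the future of $\bm Z$ leaks into $\mathcal F^{\bm W}_s$ beyond the types of the lineages surviving at $s$. Once this is made precise by factoring the survival events $S_\ell$ over the independent forward subtrees, the remainder reduces to a standard conditioning argument combining the branching property of $\bm Z$ with the conditional law $\mathcal L_i^{T-s}$.
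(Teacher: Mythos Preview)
Your approach is correct and rests on the same two ingredients as the paper's proof: the branching property of $\bm Z$ at an intermediate time, and the observation that each survival indicator $S_\ell$ is a functional of the forward subtree $\mathcal Z^\ell$ alone. Where you and the paper differ is in the packaging. The paper works directly with the finite-dimensional laws $P_{t_0;t_1,\ldots,t_n}(\bm z_0;\bm w_1,\ldots,\bm w_n)$ and proves by induction on $n$ that they factor as required, summing over the hidden value $\bm Z(t_1)=\bm z_1$ and splitting the $\bm z_1$ individuals into a surviving part $\bm w_1$ and an extinct part $\bm z_1-\bm w_1$ via a combinatorial factor and the extinction probabilities $p^{\bm 0}_{\bm z}(t,T)$. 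You instead embed $\mathcal F^{\bm W}_s$ into the larger $\sigma$-algebra $\mathcal G_s:=\sigma(\mathcal Z|_{[0,s]},\{S_\ell\})$, argue that $\EE[f(\bm W(t))\mid\mathcal G_s]$ depends only on $\bm W(s)$ because the conditioned forward subtrees $(\mathcal W^\ell\mid S_\ell=1)$ are independent with law determined by $\tau(\ell)$, and then descend to $\mathcal F^{\bm W}_s$ by the tower property (which is legitimate since $\bm W(s)\in\mathcal F^{\bm W}_s\subseteq\mathcal G_s$). Your route is cleaner and avoids the induction; the paper's route has the compensating advantage that the explicit decompositions it writes down (over $\bm z\ge\bm w$, with the factors $C_{\bm z,\bm w}$ and $p^{\bm 0}_{\bm z-\bm w}$) are reused verbatim in the next proposition to read off the transition rates of $\bm W$.
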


\begin{proof} In the event that $\bm Z(T)=0$ there is nothing to prove, so we consider $\bm W$ on the event $\bm Z(T)\neq 0\Leftrightarrow \bm W(0)\neq 0$ (and $\bm W(T)\neq 0$ as well).

For any $n\ge 1$ let $0\leq t_0\leq t_1\leq\cdots\leq t_n\leq T$, we denote the joint distribution of $\bm W$ at these times by 
$$P_{t_0;t_1,\ldots,t_n}(\bm z_0;\bm w_1,\ldots,\bm w_n)=\P\left[ \bm W({t_j})=\bm w_j,\,1\le j\le n\,\big|\,\bm Z(t_0)=\bm z_0\right].$$
We first show, by induction, that $\forall n\geq 1$
\begin{equation}
\label{eq:markovian-induc}
P_{t_0;t_1,\ldots,t_n}(\bm z_0;\bm w_1,\ldots,\bm w_n)=
P_{t_0;t_1,\ldots,t_{n-1}}(\bm z_0;\bm w_1,\ldots,\bm w_{n-1})\frac{
P_{t_0;t_{n-1},t_n}(\bm z_0;\bm w_{n-1},\bm w_n)
}{
P_{t_0;t_{n-1}}(\bm z_0;\bm w_{n-1})
}.
\end{equation}
This is evident for $n=2$. Assume the equation 
is true $\forall i\le n-1$ with $n>2$. Notice that
\begin{equation}
\label{eq:markovian-cond-0}
P_{t_0;t_1,\ldots,t_n}(\bm z_0;\bm w_1,\ldots,\bm w_n)=\sum_{\bm z_1\ge \bm w_1}\P[\bm Z({t_1})=\bm z_1|\bm Z({t_0})=\bm z_0]P_{t_1;t_1,\ldots,t_n}(\bm z_1;\bm w_1,\ldots,\bm w_n).
\end{equation}
The branching property of the birth-death process $\bm Z$ guarantees independence of its subtrees originating from non-overlapping subsets of individuals present at any time $t_1$. Since all individuals surviving at time $T$ must be descendants of the process $\bm W$, we have  
\begin{eqnarray}
\label{eq:markovian-cond-1}
P_{t_1;t_1,\ldots,t_n}(\bm z_1;\bm w_1,\ldots,\bm w_n)&=&\P\left[ \bm W({t_j})=\bm w_j,\,1\le j\le n\,\big|\,\bm Z({t_1})=\bm z_1\right]\nonumber\\
&=&C_{\bm z_1,\bm w_1}\P\left[ \bm W({t_j})=\bm w_j,\,1\le j\le n\,\big|\,\bm Z({t_1})=\bm w_1\right]p_{\bm z_1-\bm w_1}^{\bm 0}(t_1,T)\nonumber\\
&=&C_{\bm z_1,\bm w_1}P_{t_1;t_1,\ldots,t_n}(\bm w_1;\bm w_1,\ldots,\bm w_n)p_{\bm z_1-\bm w_1}^{\bm 0}(t_1,T)
\end{eqnarray}
where $C_{\bm z_1,\bm w_1}$ denotes the combinatorial number of distinct ways of choosing $\bm w_1$ out of $\bm z_1$ individuals, and $p_{\bm z}^{\bm 0}(t,T)=\P[\bm Z(T)=0|\bm Z(t)=\bm z]$ is the extinction probability  by time $T$ of the process $\bm Z$ started at time $t$ with $\bm Z(t)=\bm z$.

Given $\bm Z({t_1})=\bm w_1$, the process $(\bm Z(t))_{t\geq t_1}$ is the sum of birth-death  processes defined by subtrees $\{\mathcal T^{(i)}\}, i=1,\ldots,|\bm w_1|$, originated by one of each of the $|\bm w_1|$ individuals at time $t_1$. We may assume that each $\mathcal T^{(i)}$ is started by an individual of type $\tau^{(i)}$, where $\tau^{(1)},\ldots,\tau^{(|\bm w_1|)}$ is some ordering of the $|\bm w_1|$ surviving originator types. Probability for the surviving lineages is
\begin{eqnarray*}
P_{t_1;t_1,\ldots,t_n}(\bm w_1;\bm w_1,\ldots,\bm w_n)
&=&\P\left[ \bm W({t_j})=\bm w_j,\,1\le j\le n\,\big|\,\bm Z({t_1})=\bm w_1\right]\\
&=&\P\Big[ \bm W({t_j})(\mathcal T^{(i)})\neq 0\, \forall i,\;\, \sum_{i=1}^{|\bm w_1|} \bm W({t_j})(\mathcal T^{(i)})=\bm w_j, \,\forall 2\le j\le n\Big]
\end{eqnarray*}
where $\bm W(t)(\mathcal T^{(i)})$ denotes the number of individuals of $\mathcal T^{(i)}$ at time $t$ which have a surviving lineage at time $T$. Since the subtrees $\mathcal T^{(i)}$ are independent
\begin{equation}
\label{eq:markovian-ind-trees}
P_{t_1;t_1,\ldots,t_n}(\bm w_1;\bm w_1,\ldots,\bm w_n)=
\displaystyle \sum_{\substack{\forall 2\le j\le n,\; (\bm w_j^{(i)})_{1\le i\le|\bm w_1|}:\\\bm w_j^{(i)}>0,\; \sum_{i=1}^{|\bm w_1|}\bm w_j^{(i)}  =  \bm w_j }}
\prod_{i=1}^{|\bm w_1|}P_{t_1;t_2,\ldots,t_n}(\bm e_{\tau^{(i)}};\bm w^{(i)}_2,\ldots,\bm w^{(i)}_n),
\end{equation}
where $\bm e_i$ denotes the unit $k$-dimensional vector whose $i$-th coordinate is 1 and  all other coordinates are 0, and the summation is over all possible decompositions of $\bm w_j$ into vectors $(\bm w_j^{(i)})_{i=1,\ldots,|\bm w_1|}$ with all nonzero coordinate values, for each $j=2,\dots, n$.
By the inductive hypothesis (\ref{eq:markovian-induc}) for $n-1$, the probabilities in the product on the right side are equal to
\begin{eqnarray*}
\!\!\!\!\!\!&&\!\!\!\! P_{t_1;t_2,\ldots,t_n}(\bm e_{\tau^{(i)}};\bm w^{(i)}_2,\ldots,\bm w^{(i)}_n)\\
\!\!\!&&=\displaystyle
P_{t_1;t_2,\ldots,t_{n-1}}(\bm e_{\tau^{(i)}};\bm w^{(i)}_2,\ldots,\bm w^{(i)}_{n-1})
\frac{
P_{t_1;t_{n-1},t_n}(\bm e_{\tau^{(i)}};\bm w^{(i)}_{n-1},\bm w^{(i)}_n)
}{
P_{t_1;t_{n-1}}(\bm e_{\tau^{(i)}};\bm w^{(i)}_{n-1})
}\\
\!\!\!&&=
P_{t_1;t_2,\ldots,t_{n-1}}(\bm e_{\tau^{(i)}};\bm w^{(i)}_2,\ldots,\bm w^{(i)}_{n-1})
\P[\bm W({t_n})=\bm w_n^{(i)}|\bm W({t_{n-1}})=\bm Z({t_{n-1}})=\bm w_{n-1}^{(i)}]
\end{eqnarray*}
where the last equality follows from (\ref{eq:markovian-cond-0}) and (\ref{eq:markovian-cond-1}) since
\begin{eqnarray*}
\!\!\!\!\!\!&&\!\!\!\! P_{t_1;t_{n-1},t_n}(\bm e_{\tau^{(i)}};\bm w^{(i)}_{n-1},\bm w^{(i)}_n)\\
\!\!\!\!&&=\,\P[\bm W({t_{n-1}})=\bm w_{n-1}^{(i)}, \bm W({t_n})=\bm w_n^{(i)}|\bm Z({t_1})=\bm e_{\tau^{(i)}}]\\
\!\!\!\!\!\!&&=\!\!\!\!\sum_{\bm z_{n-1}\ge \bm w_{n-1}} \!\!\!\!\!\!\!\!
\P[Z({t_n-1})=\bm z_{n-1}|\bm Z({t_1})=\bm e_{\tau^{(i)}}]C_{\bm z_{n-1};\bm w_{n-1}^{(i)}} p^{\bm 0}_{\bm z_{n-1}-\bm w_{n-1}}(t_{n-1},T) \\
&&\qquad\qquad\qquad\qquad\qquad\qquad\qquad\qquad\qquad\qquad\qquad\cdot\P[\bm W({t_n})=\bm w_n^{(i)}|\bm W({t_{n-1}})=\bm Z({t_{n-1}})=\bm w_{n-1}^{(i)}]\\ \\
\!\!\!\!&&=\,\P[\bm W({t_n})=\bm w_n^{(i)}|\bm W({t_{n-1}})=\bm Z({t_{n-1}})=\bm w_{n-1}^{(i)}]P_{t_1;t_{n-1}}(\bm e_{\tau^{(i)}};\bm w^{(i)}_{n-1}).
\end{eqnarray*}

As the first factor on the right side above does not depend on $(\bm w_n^{(i)})_{i=1,\ldots,|\bm w_1|}$ the sum in (\ref{eq:markovian-ind-trees}) may be split into outer sums, over $2\le j\le n-1$, and an inner sum, over $j=n$ that is equal to
$$
\sum_{\substack{(\bm w_n):\bm w_n^{(i)}>0,\\ \sum_{i=1}^{|\bm w_1|}\bm w_n^{(i)}  =  \bm w_n}}
\prod_{i=1}^{|\bm w_1|}\P[\bm W({t_n})=\bm w_n^{(i)}|\bm W({t_{n-1}})=\bm Z({t_{n-1}})=\bm w_{n-1}^{(i)}].
$$
By the same argument using splitting over independent subtrees, but this time splitting the individuals at time $t_{n-1}$ into subsets of sizes $(\bm w_{n-1}^{(i)})_{i=1,\ldots,|\bm w_1|}$, we can show that this sum contributes to the outer sums a factor of $$\P[\bm W({t_n})=\bm w_n|\bm W({t_{n-1}})=\bm Z({t_{n-1}})=\bm w_{n-1}]=
\frac{
P_{t_0;t_{n-1},t_n}(\bm z_0;\bm w_{n-1},\bm w_n)
}{
P_{t_0;t_{n-1}}(\bm z_0;\bm w_{n-1})
},$$
where the last equality follows 
again from equations (\ref{eq:markovian-cond-0}) and (\ref{eq:markovian-cond-1}), and combining with the outer sums in (\ref{eq:markovian-ind-trees}) implies
$$
P_{t_1;t_1,\ldots,t_n}(\bm w_1;\bm w_1,\ldots,\bm w_n) = P_{t_1;t_1,\ldots,t_{n-1}}(\bm w_1;\bm w_1,\ldots,\bm w_{n-1})
\frac{
P_{t_0;t_{n-1},t_n}(\bm z_0;\bm w_{n-1},\bm w_n)
}{
P_{t_0;t_{n-1}}(\bm z_0;\bm w_{n-1})
},
$$
as wanted. By using once again equations (\ref{eq:markovian-cond-0}) and (\ref{eq:markovian-cond-1}), this becomes equation (\ref{eq:markovian-induc}) for step $n$. Equation (\ref{eq:markovian-induc}) may be written in terms of conditional probabilities as
\begin{eqnarray*}
\P\left[\bm W({t_n})=\bm w_n\,\big|\,\bm W({t_j})=\bm w_j,\,1\le j\le n-1,\,\bm Z({t_0})=\bm z_0\right]\\
=\P\left[\bm W({t_n})=\bm w_n\,\big|\,\bm W({t_{n-1}})=\bm w_{n-1},\,\bm Z({t_0})=\bm z_0\right]
\end{eqnarray*}
which implies the Markov property for $(\bm W(t))_{t\geq0}$.
\end{proof}

\begin{prop}\label{prop:ancestral} Assume the multi-type birth-death process $\bm Z$  has birth rates $\{b_{i}^{ij}\}_{i,j\in\{1,\dots, k\}}$ ($b_{i}^{ij}$= rate at which any type $i$ gives birth to a type $j$) and death rates $\{d_i\}_{i\in\{1,\dots, k\}}$ ($d_i$= rate at which any type $i$ dies). Then, for any $T>0$, the reconstructed ancestral process $\bm W$  is a pure birth process with birth rates $\{q_i^{ij}(t)\}_{i,j\in\{1,\dots, k\}}$ ($q_{i}^{ij}$= rate at which type $i$ gives birth to type $j$) and mutation rates $\{q_i^{j}(t)\}_{i\in\{1,\dots, k\}}$ ($q_i^{j}$= rate at which type $i$ changes into type $j$) at time $t\in[0, T)$, given by 
\begin{equation}\label{eq:ancestral-rates} 
q_i^{ij}(t)=b_{i}^{ij}(1-p^{\bm 0}_{\bm e_j(t,T)})\; \forall i,j, \quad  q_i^{j}(t)=b_{i}^{ij}(1-p^{\bm 0}_{\bm e_j(t,T)})\frac{p^{\bm 0}_{\bm e_i(t,T)}}{1-p^{\bm 0}_{\bm e_i(t,T)}}\; \forall i\neq j
\end{equation}
where $p^{\bm 0}_{\bm e_i(t,T)}=\P[\bm Z(T)=0|\bm Z(t)=\bm e_i]$ are the extinction probabilities for $\bm Z$.
\end{prop}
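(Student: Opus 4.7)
The plan is to combine the Markov property of $\bm W$ from Lemma~\ref{lem:ancestral-markovian} with the branching property of $\bm Z$ to reduce the proposition to computing the infinitesimal transition rates of $\bm W$ out of a single-lineage state $\bm e_i$ at a fixed time $t<T$. Because the subtrees of $\bm Z$ rooted at the $|\bm W(t)|$ individuals of $\bm W$ evolve as independent copies of $\bm Z$, each conditioned on having an extant descendant at time $T$, $\bm W$ inherits this branching structure and it is enough to identify the rates of each event type affecting one typical type-$i$ lineage.

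First I would condition on ``$\bm Z(t)=\bm e_i$ and $\bm Z(T)\neq 0$'', an event of probability $1-p^{\bm 0}_{\bm e_i(t,T)}$, and decompose the evolution over a short interval $[t,t+h]$ by the first event of $\bm Z$: no event, a death (rate $d_i$), or a birth to some type $j$ (rate $b_i^{ij}$). Immediately after a birth to $j$, the continuing type-$i$ particle and the new type-$j$ offspring evolve as independent subtrees by the branching property, so the joint probability that \emph{both} have descendants at $T$ factors as $(1-p^{\bm 0}_{\bm e_i(t+h,T)})(1-p^{\bm 0}_{\bm e_j(t+h,T)})$, which by continuity converges to $(1-p^{\bm 0}_{\bm e_i(t,T)})(1-p^{\bm 0}_{\bm e_j(t,T)})$ as $h\to 0$.

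Next I would sort the visible transitions of $\bm W$ on $[t,t+h]$. A branching $\bm e_i\to\bm e_i+\bm e_j$ in $\bm W$ corresponds to a birth $i\to j$ in $\bm Z$ in which \emph{both} offspring have descendants at $T$; a ``mutation'' $\bm e_i\to\bm e_j$ for $j\neq i$ corresponds to a birth $i\to j$ in $\bm Z$ in which \emph{only the $j$ offspring} has descendants at $T$ while the $i$-continuation dies out before $T$. Dividing the first-order numerators by the normalizer $1-p^{\bm 0}_{\bm e_i(t,T)}$ and letting $h\to 0$ should yield
\begin{eqnarray*}
q_i^{ij}(t) &=& b_i^{ij}\,\frac{(1-p^{\bm 0}_{\bm e_i(t,T)})(1-p^{\bm 0}_{\bm e_j(t,T)})}{1-p^{\bm 0}_{\bm e_i(t,T)}} \;=\; b_i^{ij}\bigl(1-p^{\bm 0}_{\bm e_j(t,T)}\bigr),\\
q_i^{j}(t) &=& b_i^{ij}\,\frac{p^{\bm 0}_{\bm e_i(t,T)}\bigl(1-p^{\bm 0}_{\bm e_j(t,T)}\bigr)}{1-p^{\bm 0}_{\bm e_i(t,T)}}\qquad (j\neq i),
\end{eqnarray*}
matching (\ref{eq:ancestral-rates}). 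All remaining cases---the parent dying, a birth in which only the $i$-continuation survives, or one in which neither offspring survives---are either excluded by the conditioning, invisible in $\bm W$, or of conditional probability zero; in particular $\bm W$ acquires no death rate and is a pure birth process with superimposed type switches.

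The main obstacle I foresee is the bookkeeping that translates the single event ``the parent lineage has a descendant at $T$'' into two independent survival events of the two offspring just after the birth. This requires a careful appeal to the branching/Markov property of $\bm Z$ at the (random) birth time to split the conditioning across the two subtrees, together with controlling the $o(h)$ remainders so that they can be dropped cleanly when reading off the rates. Once that decomposition is justified, the remaining work is routine first-order conditional-probability algebra.
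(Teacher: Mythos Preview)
Your proposal is correct and reaches the same rates as the paper, but the organization is genuinely different. The paper does \emph{not} first reduce to a single lineage; instead, for a general state $\bm W(t)=\bm w$ it computes $\P[\bm W(t+\Delta t)=\bm w+\bm e_j\mid \bm W(t)=\bm w]$ (and similarly for $\bm w+\bm e_j-\bm e_i$) by summing over all hidden values $\bm Z(t)=\bm z\ge\bm w$, using the decomposition from the proof of Lemma~\ref{lem:ancestral-markovian} (equation~(\ref{eq:markovian-cond-1})) to factor each term as a combinatorial factor times $(\bm 1-\bm p^{\bm 0})^{\bm w}\bm p^{\bm 0}{}^{\bm z-\bm w}$; the common $\bm z$-sum cancels between numerator and denominator, leaving $\sum_i \bm w_i\,b_i^{ij}(1-p^{\bm 0}_{\bm e_j})\Delta t+o(\Delta t)$ and the analogous mutation expression. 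Your route instead invokes the branching property to pass to a single type-$i$ lineage conditioned on survival, then does a first-event decomposition on $[t,t+h]$. That is arguably cleaner conceptually and makes the interpretation (``both offspring survive'' versus ``only the $j$-offspring survives'') transparent, but it requires you to justify up front that $\bm W$ itself inherits the branching structure (so that rates computed from $\bm e_i$ really give the per-lineage rates at any $\bm w$); this is exactly the content of equations~(\ref{eq:markovian-cond-0})--(\ref{eq:markovian-ind-trees}) in the proof of Lemma~\ref{lem:ancestral-markovian}, so you should cite them rather than leave that step implicit. The paper's direct computation avoids this extra step at the cost of carrying the sum over $\bm z$.
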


\begin{rem} When there is only one type, for example $i$, this reduces to a pure birth process with time varying birth rate $b_i(1-p^{\bm 0}_{\bm e_i})$ as previously established (\cite{Neeetal}).
The extinction probabilities $\{p^{\bm 0}_{\bm e_i}(t,T)\}_{i\in\{1,\ldots,k\}}$ can be shown to satisfy a system of differential equations (\cite{M08}, \cite{J11})
$$\frac{d p_{\bm e_i}^{\bm 0}(t,T)}{dt}=d_i-(\sum_{j=1}^kb_{i}^{ij}+d_i)p_{\bm e_i}^{\bm 0}(t,T)+\sum_{j=1}^k b_i^{i j}p_{\bm e_i}^{\bm 0}(t,T)p_{\bm e_j}^{\bm 0}(t,T),\quad  i=1,\ldots,k.
$$
\end{rem}

\begin{proof}
By Lemma~\ref{lem:ancestral-markovian}, the reconstructed ancestral process $(\bm W(t))_{t\geq0}$ is Markov, so it suffices to show that its only transitions are changes of the form $\{\bm e_i, { i=1,\ldots, k}\}$ and $\{\bm e_j-\bm e_i, { i\neq j=1,\ldots,k}\}$ and calculate their  rates. The set of possible transition changes for $\bm Z$, and the fact that $|\bm W(t)|$ is non-decreasing, imply the form of changes for $\bm W$: an addition of $\bm e_i$ occurs iff there is a birth event and both the new lineage and the parent lineage survive to $T$, an addition of $\bm e_i-\bm e_j,$ occurs iff there is a birth event and only the new lineage survives to $T$ (see Figure~\ref{fig:birthdeathtrees} for an example). 

Considering the possible values of the underlying birth-death process $\bm Z$ for a transition in $(t,t+\Delta t]$, using (\ref{eq:markovian-cond-1}), we get
\begin{eqnarray}\label{eq:ancestral-transition-birth}
&&\!\!\!\!\!\!\!\!\!\!\!\!\!\!\!\!\P[\bm { W}_{t+\Delta t}=\bm w+\bm e_j\,|\, \bm W(t)=\bm  w]\nonumber\\ 
&&=\displaystyle\frac{\displaystyle\sum_{\bm z\ge \bm w}\P[\bm W(t+\Delta t)=\bm w+\bm e_j,\bm W(t)=\bm w,\bm Z(t)=\bm z]}{\displaystyle\sum_{\bm z\ge w}\P[\bm W(t)=\bm w,\bm Z(t)=\bm z]}\nonumber\\
&&=\displaystyle\frac{\displaystyle\sum_{\bm z\ge \bm w}\P[\bm Z(t)=\bm z]C_{\bm z,\bm w}\sum_{i=1}^k \bm w_i{b}_i^{ij}\Delta t\big(\bm 1-\bm p^{\bm 0}(t+\Delta t,T)\big)^{\bm w+\bm e_i}\bm p^{\bm 0}(t+\Delta t,T)^{\bm z-\bm w}+o(\Delta t)}{\displaystyle\sum_{\bm z}\P[\bm Z(t)=\bm z]C_{\bm z,\bm w}(\bm 1-\bm p^{\bm 0}(t,T))^{\bm w}\bm p^{\bm 0}(t,T)^{\bm z-\bm w}}\nonumber\\ 
&&=\sum_{i=1}^k\bm w_i {b}_i^{ij}(1-p_{\bm e_j}^{\bm 0}(t,T))\Delta t+o(\Delta t).
\end{eqnarray}
where we used notation\; $\bm p^{\bm 0}(t,T)^{\bm w}:=\displaystyle\prod_{i=1}^k\bm p^{\bm 0}_{\bm e_i}(t,T)^{\bm w_i} $,\, $(\bm 1-\bm p^{\bm 0}(t,T))^{\bm w}:=\displaystyle\prod_{i=1}^k(1-\bm p^{\bm 0}_{\bm e_i}(t,T))^{\bm w_i} .$ \\
Similarly for $i\neq j$
\begin{eqnarray}\label{eq:ancestral-transition-mutation}
&&\!\!\!\!\!\!\!\!\!\!\!\!\!\!\!\!\P[\bm W(t+\Delta t)=\bm w+\bm e_j-\bm e_i\,|\, \bm W(t)=\bm  w] \nonumber\\
&&=\displaystyle\frac{\displaystyle\sum_{\bm z}\P[\bm Z(t)=\bm z]C_{\bm z,\bm w}\, \bm w_i{b}_i^{ij}\Delta t\big(\bm 1-\bm p^{\bm 0}(t+\Delta t,T)\big)^{\bm w+\bm e_j-\bm e_i}\bm p^{\bm 0}(t+\Delta t,T)^{\bm z-\bm w+\bm e_i}+o(\Delta t)}{\displaystyle\sum_{\bm z}\P(\bm Z(t)=\bm z)C_{\bm z,\bm w}(\bm 1-\bm p^{\bm 0}(t,T))^{\bm w}\bm p^{\bm 0}(t,T)^{\bm z-\bm w}}\nonumber\\ 
&&=\displaystyle\frac{\bm w_j {b}_i^{ij}(1-p_{\bm e_j}^{\bm 0}(t,T))p_{\bm e_i}^{\bm 0}(t,T)}{1-p_{\bm e_i}^{\bm 0}(t,T)}\Delta t+o(\Delta t).
\end{eqnarray}

Transition rates (\ref{eq:ancestral-transition-birth}) and (\ref{eq:ancestral-transition-mutation}) correspond to those of a pure birth process allowing for mutations along the lineages as claimed in (\ref{eq:ancestral-rates}).
\end{proof}

In continuous time $t\in[0,T)$ nodes of different types have different time varying {\it weights}, such that at any time the probability of a node of certain type is chosen to be the next node with a branch-point (binary or unary) is proportional to this weight. The weight of a node of type $i$ is $a_{i}(t)={q_{i}(t)}/{\sum_{\ell=1}^kq_{\ell}(t)}$ where 
$$  q_i(t)=\frac{1}{1-p^{\bm 0}_{\bm e_i}(t,T)}\Big(\sum_{j=1}^k b_{i}^{ij}(1-p^{\bm 0}_{\bm e_j}(t,T))-b_i^{ii}p^{\bm 0}_{\bm e_i}(t,T)(1-p^{\bm 0}_{\bm e_i}(t,T))\Big) $$
is the overall rate of events for type $i$. 
The probabilities of a node of type $i$ having a binary branch-point (of type $i$ and $j$) versus a unary branch-point (of type $j\neq i$) are
\begin{eqnarray*}
p_{i}^{i j}(t)&&=\frac{b_i^{i j}(1- p_{\bm e_{j}}^{\bm 0}(t,T) )( 1- p_{\bm e_{i}}^{\bm 0}(t,T))}{\sum_{\ell=1}^k b_i^{i \ell}(1- p_{\bm e_{\ell}}^{\bm 0}(t,T))-b_i^{ii}p^{\bm 0}_{\bm e_i}(t,T)(1-p^{\bm 0}_{\bm e_i}(t,T))}\; \forall i,j,\\ 
p_{i}^j(t)&&=\frac{ \displaystyle b_i^{ i j}(1-p_{\bm e_j}^{\bm 0}(t,T))p_{\bm e_{i}}^{\bm 0}(t,T)}{\sum_{\ell=1}^k b_i^{i \ell}(1- p_{\bm e_{\ell}}^{\bm 0}(t,T))-b_i^{ii}p^{\bm 0}_{\bm e_i}(t,T)(1-p^{\bm 0}_{\bm e_i}(t,T))}\; \forall i\neq j.
\end{eqnarray*}

Contrary to the single type case, it is not possible to rescale time and relate this to a Yule process with constant rates of birth and mutations, because the rate at which the time needs to be rescaled depends on the type of the node that was involved in the last branching event. This information is dependent on the randomness of the tree and is not simply a  deterministic function of time as it is in the single type case. 
Consequently, ignoring the edge lengths and possibly the ranking of branching events in these trees does not produce any logical model on multi-type discrete trees. Topologically it results in multi-type discrete trees which are no longer regular binary ones, as in addition to binary branch-points they also have unary branch-points (with the type attached being necessarily different). Figure~\ref{fig:yuletrees} illustrates obtaining such a discrete tree.

 \begin{figure}
\centering
{\includegraphics[width=10cm]{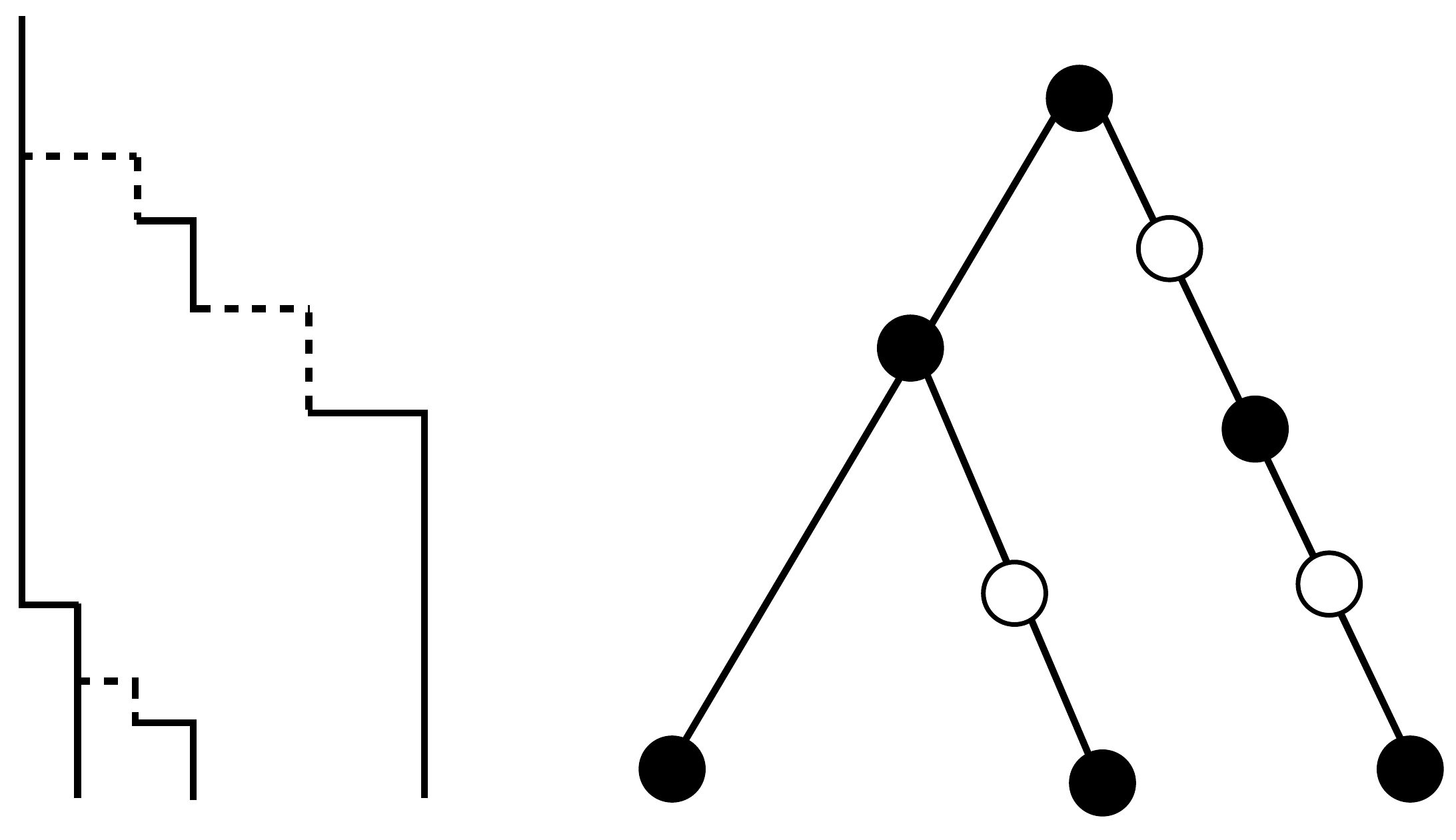}}\\
\caption{The ancestral tree from Figure~\ref{fig:birthdeathtrees} and the corresponding discrete two-type tree with branch-points and mutations, obtained by ignoring edge-lengths and ranking in the former.}\label{fig:yuletrees}
\end{figure}

However, as in the single-type case, near the present ($t\approx T$) probabilities of extinction $\bm p^0(t,T)$ are approximately zero, and birth and mutation rates in the ancestral tree are approximately constant $q_{i}^{ij}\approx b_{i}^{ij}, \forall i, j$, and  $q_i^j\approx 0, \forall i\neq j$. This allows one to infer birth rates of the process using results on constant rate multi-type Yule trees described in the next Section (see Corollary~\ref{cor:contsinferrence}). Knowing the values of lineage through time plots for different types ($\bm Z(t),0\le t\le T$)  will then allow one to also infer death rates of the process.

\begin{rem}\label{rem:fromYuletoERM}
If we consider multi-type Yule trees with mutations whose birth rates $\{q_i^{ij}\}_{i,j\in\{1,\ldots, k\}}$ and mutation rates $\{q_i^j\}_{i\neq j\in\{1,\ldots,k\}}$ are constant,  ignoring edge lengths results in a useful model on multi-type discrete trees:\; 
each node of type $i$ is chosen to be the next branch-point with probability proportional to its weight $$a_i=\frac{q_i}{\sum_{\ell=1}^k q_\ell},\; \mbox{ where }\; q_i=\sum_{j=1}^k q_i^{ij}+\sum_{\substack{j=1\\ j\neq i}}^kq_i^j;$$ once chosen the branch-point is binary with attached leaves of types $i,j$, or unary with attached leaf of type $j\neq i$, respectively, with probabilities $$p_{i}^{ij}=\frac{q_i^{ij}}{q_i}\;\forall i,j,\; \mbox{ and }\;  p_{i}^{j}=\frac{q_i^j}{q_i}\;\forall i\neq j.$$ 
The distribution of different types of cherries and pendants in the tree should provide information about its birth and mutation rates. However, the approaches for obtaining their distribution using generating functions and recursive relations (when the number of leaves is finite), as well as the Polya urn approach for their asymptotic distribution (as the number of leaves grows), are completely unwieldly. The more appropriate approach is to analyze distributions of different types of cherries and pendants in the original continuous time  trees as shown in the next Section.
\end{rem}

\section{Multi-type Yule trees with mutations}
We consider a multi-type birth process with mutations constructed using time-dependent birth rates $\{q_i^{j_1j_2}(t)\}_{i,j_1,j_2\in\{1,\ldots,k\}}$ and mutation rates $\{q_i^j(t)\}_{i\neq\in\{1,\ldots,k\}}$, and call its associated tree a {\it multi-type Yule tree with mutations}. For generality, we allow for the birth events to result in an instantaneous change of type for the parent node as well, so that birth rates for a parent node of type $i$ are indexed  in the superscript by any $j_1, j_2\in\{1,\ldots,k\}$  giving a birth event of type $ij_1j_2$ (rather than only having birth events of types $iij$ as in ancestral trees of the previous Section). Consequently, each birth event is a branch-point (with no special designation in the continuing lineages) and in order not to distinguish between different  planar embeddings we will w.l.o.g. assume that $j_1\le j_2$  (as in the multi-type ERM case). For $k$ types this model has $k^2(k+1)/2+k(k-1)$  parameters. 

Due to mutations in the model (producing unary branch-points) we need to precise a  definition of multi-type cherries and pendants in such a tree. Since the sequence of mutation events along a lineage is typically not  available in data, we will focus on the types at the topological end-points of the structure. We first let the topology of the tree be defined only by binary branch-points, while unary branch-points are ignored. The cherries and pendants are then defined in this topology as they would be in a regular binary tree. This means that the type of each cherry and each pendant is defined by the type values at the end nodes of the cherry or pendant. respectively. Figure~\ref{fig:yuletrees} illustrates a two-type Yule tree with mutations which has only one cherry of type 222 and only one pendant of type 22.  In general there are $k^2(k+1)/2$ different types of cherries (we don't differentiate between different planar embeddings of a cherry type), $k^2$ different types of pendants (sequence of mutations along a lineage can revert to the original type), and $k$ different types of leaves. 

\subsection{Moments of the number of different types of cherries and pendants}\label{subsec:Yulemoments}

For a multi-type Yule tree with mutations, we let $N_1(t), \ldots, N_k(t)$ denote the number of leaves of types $1,\ldots, k$, respectively, at time $t$. Let $C_{i}^{j_2j_2}(t)$ denote the number of cherries of type $ij_1j_2$, and $L_i^j(t)$ the number of pendants of type $ij$ at time $t$. We next consider their means, which are relatively straightforward, although quite complicated, to calculate.

\begin{lem}\label{lem:meannudependant-eq}
Let $\bm {\nu}(t)=(\nu_1(t),\ldots,\nu_k(t))$ be the vector of leaf means, 
$\nu_i(t):=\mbb E[N_i(t)], \forall i$. Then, $\forall t\ge 0$
$$\frac{d\bm{\nu}(t)}{dt}=\bm B(t)\bm{\nu}(t)
$$ 
where $\bm B(t)$ is the $k\times k$ matrix with entries
$$[\bm B(t)]_{\ell_1,\ell_2}=\left\{\begin{array}{lc} \displaystyle q_{\ell_1}^{\ell_1\ell_1}(t)-\sum_{\substack{i\leq j \\ i,j\neq\ell_1}} q_{\ell_1}^{ij}(t)-\sum_{i\neq\ell_1} q_{\ell_1}^{i}(t) &\mbox{ when } \ell_1=\ell_2\\ \\
\displaystyle 2 q_{\ell_2}^{\ell_1\ell_1}(t)+ q_{\ell_2}^{\ell_1}(t)+\sum_{j < \ell_1} q_{\ell_2}^{j\ell_1}(t)+\sum_{j > \ell_1} q_{\ell_2}^{\ell_1 j}(t)& \mbox{ when } \ell_1\neq \ell_2.\end{array}\right.$$
\end{lem}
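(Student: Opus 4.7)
The plan is to obtain the ODE by writing the infinitesimal generator of the leaf-count process $(N_1(t),\ldots,N_k(t))$ acting on the coordinate function $N_{\ell_1}$, and then taking expectations. Since every event in the multi-type Yule tree with mutations changes the leaf counts by a bounded amount and occurs at a rate linear in the current leaf counts, the resulting forward equation for $\nu_{\ell_1}(t)$ is linear in $\bm\nu(t)$, so the system must take the matrix form $\frac{d\bm\nu}{dt}=\bm B(t)\bm\nu(t)$; the content of the lemma is the identification of the matrix entries.

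First, for a fixed target type $\ell_1$, I would condition on the tree at time $t$ and enumerate events in $[t,t+dt]$ by the type $\ell_2$ of the leaf at which they occur. For a leaf of type $\ell_2$ the possibilities are (a) a branching event of type $\ell_2 j_1 j_2$ with $j_1\le j_2$ at rate $q_{\ell_2}^{j_1 j_2}(t)$, which replaces the parent leaf by two children of types $j_1, j_2$, and (b) a mutation to a type $j\neq\ell_2$ at rate $q_{\ell_2}^{j}(t)$, which changes the leaf's type from $\ell_2$ to $j$. In either case the induced change in $N_{\ell_1}$ equals (number of new $\ell_1$-children produced) minus $\mathbf{1}\{\ell_2=\ell_1\}$. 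Summing rate\,$\times$\,change over all events and over the $N_{\ell_2}$ leaves of each type, and then taking expectations, gives $\frac{d\nu_{\ell_1}(t)}{dt}=\sum_{\ell_2=1}^{k}[\bm B(t)]_{\ell_1,\ell_2}\,\nu_{\ell_2}(t)$.

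Next, I would identify the entries of $\bm B(t)$ case by case. The diagonal entry ($\ell_2=\ell_1$) splits according to whether the branching produces $0$, $1$, or $2$ new $\ell_1$-children, with net changes $-1$, $0$, $+1$ respectively; combined with a net change of $-1$ per mutation away from $\ell_1$, this reproduces $q_{\ell_1}^{\ell_1\ell_1}-\sum_{i\le j,\,i,j\neq\ell_1}q_{\ell_1}^{ij}-\sum_{i\neq\ell_1}q_{\ell_1}^{i}$. For the off-diagonal ($\ell_2\neq\ell_1$), only branchings producing at least one $\ell_1$-child, or a mutation $\ell_2\to\ell_1$, contribute, with coefficients $+2$, $+1$, $+1$, matching the stated formula.

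The main place where care is needed is the ordered-pair convention $j_1\le j_2$ on birth-rate indices: when $\ell_2\neq\ell_1$ and the branching produces exactly one $\ell_1$-child together with a sibling of type $j\neq\ell_1$, this event is carried by $q_{\ell_2}^{j\ell_1}$ when $j<\ell_1$ and by $q_{\ell_2}^{\ell_1 j}$ when $j>\ell_1$, which is why the off-diagonal appears as two separate sums; overlooking the ordering would either double-count or miss half of the relevant branchings. Everything else is a routine generator calculation, essentially the same kind of linear balance already used (in discrete time) in the proof of Lemma~\ref{lem:meanleaves}.
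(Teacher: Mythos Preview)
Your proposal is correct and follows essentially the same approach as the paper: both compute the infinitesimal expected change in $N_{\ell_1}$ by enumerating the birth and mutation events at leaves of each type $\ell_2$, weighting by rate times increment, and reading off the coefficients of $\nu_{\ell_2}(t)$. The paper's argument is phrased as an informal $(t,t+\Delta t)$ balance rather than in generator language, but the bookkeeping---including the careful handling of the $j_1\le j_2$ ordering you flag---is identical.
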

\begin{proof}
The matrix formulation is equivalent to the claim that each $\nu_{\ell}(t)$ for $1\le \ell\le k$ satisfies
$$\frac{d\nu_{\ell}(t)}{dt}=\sum_{i\neq\ell} \Big(2 q_{i}^{\ell\ell}(t)+ q_{i}^{\ell}(t)+\sum_{j<\ell} q_{i}^{j\ell}(t)+\sum_{j>\ell} q_{i}^{\ell j}(t)\Big)\nu_i(t) +\Big( q_{\ell}^{\ell\ell}(t)-\sum_{\substack{i\leq j\\ i,j\neq\ell}} q_{\ell}^{ij}(t)-\sum_{i\neq\ell} q_{\ell}^{i}(t)\Big) \nu^{\ell}(t).$$ 
To see why this is true, observe that in the time interval $(t,t+\Delta t)$ the number of leaves of type $\ell$ increases by 2 iff we have a birth event of type $i\ell\ell$ for some $i\neq\ell$. It increases by 1 iff we have a birth event of type $ij\ell$ for some $i,j\neq\ell$ or for $i=j=\ell$, or if we apply a change of type $i\ell$ for some $i\neq\ell$. The number of leaves of type $\ell$ decreases by 1 only by having birth events of types $\ell ij$ or by having type changes $\ell i$ for $i,j\neq\ell$.
\end{proof}

\begin{rem}\label{rem:meannudependant}
 If $\bm B(t)$ in Lemma~\ref{lem:meannudependant-eq} is such that it commutes with $\int_0^t\bm B(\tau) dt\tau$ $\forall t\geq0$, then the vector of leaf means can be given explicitly as
$$\bm{\nu}(t)=\int_0^t\exp\{\bm B(\tau)\}d\tau\,\bm \nu(0), \;\mbox{ and }\; \bm{\nu}(t)=\exp\{\bm Bt\}\,\bm \nu(0)
$$
if $\bm B(t)$ is a constant (time-independent) matrix $\bm B$.\\
\end{rem}

\indent Let $\rho(t):=\sum_{i=1}^k\nu_i(t)$. By adding up counts for all different leaves we obtain the following.
\begin{cor}\label{cor:meanrho}
Assume $\bm B(t)$ commutes with $\int_0^t\bm B(\tau) d\tau$ $\forall t\geq0$, then 
$$\rho(t)=\bm 1^\mathsf{T}\, \int_0^t\exp\{\bm B(\tau)\}d\tau\, \bm \nu(0),  \;\mbox{ and }\;\; \rho(t)=\bm 1^\mathsf{T}\exp\{\bm Bt\}\bm \nu(0)\; \mbox{ if }\bm B(t)\equiv \bm B\, \forall t\ge 0.$$
\end{cor}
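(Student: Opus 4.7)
The plan is to reduce this directly to Remark~\ref{rem:meannudependant} by a single left-multiplication. By definition, $\rho(t) = \sum_{i=1}^k \nu_i(t)$, which is precisely $\bm 1^\mathsf{T}\bm\nu(t)$. Under the stated commutativity hypothesis, Remark~\ref{rem:meannudependant} provides an explicit closed form for $\bm\nu(t)$ as the solution of the matrix ODE from Lemma~\ref{lem:meannudependant-eq}, so substituting that form and pulling the fixed row vector $\bm 1^\mathsf{T}$ inside the integral (it does not depend on $\tau$) gives the first identity. For the constant case, one simply notes that any constant $\bm B$ commutes with $\int_0^t \bm B\,d\tau = t\bm B$ trivially, so the commutativity hypothesis holds automatically and the second assertion of Remark~\ref{rem:meannudependant} applies.

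Concretely, the steps in order are: (i) write $\rho(t) = \bm 1^\mathsf{T}\bm\nu(t)$; (ii) invoke Remark~\ref{rem:meannudependant} to substitute the integrated exponential expression for $\bm\nu(t)$; (iii) verify (trivially) that a constant matrix satisfies the commutativity hypothesis and specialize to get the second formula. No recursion or generating-function machinery is needed, since Lemma~\ref{lem:meannudependant-eq} already gives the linear ODE $d\bm\nu/dt = \bm B(t)\bm\nu(t)$ and Remark~\ref{rem:meannudependant} already integrates it under the commutation assumption.

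The only substantive point worth a brief sanity check is the step already embedded in Remark~\ref{rem:meannudependant}: when $\bm B(t)$ commutes with $\int_0^t \bm B(\tau)\,d\tau$, the matrix exponential $\exp\{\int_0^t \bm B(\tau)\,d\tau\}$ differentiates to $\bm B(t)\exp\{\int_0^t \bm B(\tau)\,d\tau\}$ (the commutativity is exactly what is required for the usual scalar chain rule to carry over to the matrix exponential), so the claimed $\bm\nu(t)$ indeed solves the ODE with the correct initial value $\bm\nu(0)$. Since this is asserted in the remark, the corollary itself is one line of algebra once $\rho(t) = \bm 1^\mathsf{T}\bm\nu(t)$ is written down.

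There is no real obstacle here: the statement is a direct corollary, and the only care needed is to track the commutativity hypothesis (inherited in the time-varying case, automatic in the constant case) and to keep the row/column conventions consistent so that $\bm 1^\mathsf{T}$ multiplies the solution on the left and $\bm\nu(0)$ on the right.
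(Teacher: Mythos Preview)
Your approach is exactly the paper's: the corollary is introduced with the sentence ``By adding up counts for all different leaves we obtain the following,'' i.e., it is obtained from Remark~\ref{rem:meannudependant} simply by writing $\rho(t)=\bm 1^{\mathsf T}\bm\nu(t)$ and left-multiplying. (As your sanity-check paragraph implicitly notices, the intended closed form under the commutation hypothesis is $\exp\{\int_0^t \bm B(\tau)\,d\tau\}$ rather than $\int_0^t\exp\{\bm B(\tau)\}\,d\tau$; this typo is inherited verbatim from the Remark and does not affect the one-line reduction you describe.)
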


\vspace{5mm}
We next give the mean number of cherries whose branch-point is of type $\ell$. The mean number of cherries with branch-points of other types can be obtained analogously.

\begin{prop}\label{prop:meanmudependant}
Let $\bm \mu_{\ell}(t)=(\mu_{\ell}^{11}(t),\ldots,\mu_{\ell}^{kk}(t))$ be the vector of cherry means \mbox{$\mu_{\ell}^{ij}(t):=\mbb E[C_{\ell}^{11}(t)]$} of types $\ell ij$, for $i\leq j\in\{1,\ldots, k\}$. Then, $\forall t\ge 0$
$$\frac{d\bm \mu_\ell(t)}{dt}= \bm A_\ell(t)\bm \mu_\ell(t)+\bm {q}_{(\ell)}(t) \nu_\ell(t),$$
where $$\bm {q}_{(\ell)}(t):=[q_{\ell}^{11}(t),q_{\ell}^{12}(t), \ldots, q_{\ell}^{kk}(t)]^\mathsf{T}$$
and $\bm A_\ell(t)$ is a ${{k+1}\choose 2}\times {{k+1}\choose 2}$ matrix with entries
$$[\bm A_\ell(t)]_{\ell ij,\ell m n}=\left\{\begin{array}{ll} -(q_{i}(t)+q_{j}(t)), &\mbox { when } (m,n)=(i,j)\\ \delta_{m,i} q_{n}^i(t)+ \delta_{n,i}q_{m}^i(t), & \mbox { when } (m,n)\neq (i,j),\, i= j\\   \delta_{m,i}q_{n}^j(t)+ \delta_{m,j}q_{n}^i(t)+ \delta_{n,i}q_{m}^j(t)+ \delta_{n,j}q_{m}^i(t),& \mbox { when } (m,n)\neq (i,j),\, i\neq j.\end{array}\right.$$
where, for $i\in\{1,\ldots,k\}$, $q_i(t)$ is the overall rate of events occurring to a lineage of type $i$
$$q_i(t):=\sum_{j\le \ell}q_i^{j\ell}+\sum_{j\neq i}q_i^j,$$ 
and where entries in the matrix $\bm A_\ell(t)$ are ordered in a consistent way with that of types in the vectors $\bm\mu_\ell(t)$, $\bm {q}_{(\ell)}(t)$.
\end{prop}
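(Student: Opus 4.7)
The plan is to derive the ODE for $\bm \mu_\ell(t)$ by writing an infinitesimal balance equation for each coordinate $\mu_\ell^{ij}(t)=\mbb{E}[C_\ell^{ij}(t)]$, following the same strategy as in the proof of Lemma~\ref{lem:meannudependant-eq} but now for pairs of leaves sitting on a common branch-point. Fix $\ell$ and $i\le j$. In a time window $(t,t+\Delta t]$, up to terms of order $o(\Delta t)$, the count $C_\ell^{ij}$ can change in only three qualitatively different ways: (i) a type-$\ell$ leaf undergoes a birth event of type $ij$, creating a new cherry with branch-point type $\ell$ and leaves of types $i,j$; (ii) one of the two leaves of an existing cherry of type $\ell ij$ undergoes any event at all (birth or mutation), destroying it as a cherry of type $\ell ij$; (iii) one leaf of an existing cherry of some type $\ell mn\neq \ell ij$ mutates in such a way that the resulting pair of end-leaves has types $\{i,j\}$.

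The first mechanism contributes a source term. Since each type-$\ell$ leaf gives birth to offspring of types $ij$ at rate $q_\ell^{ij}(t)$, and there are in expectation $\nu_\ell(t)$ such leaves, mechanism (i) contributes $q_\ell^{ij}(t)\,\nu_\ell(t)$ to $\tfrac{d}{dt}\mu_\ell^{ij}(t)$, which is precisely the $i,j$-entry of $\bm q_{(\ell)}(t)\,\nu_\ell(t)$. Mechanism (ii) is a pure loss: a leaf of type $i$ (resp.\ $j$) experiences some event at total rate $q_i(t)$ (resp.\ $q_j(t)$), so each existing cherry of type $\ell ij$ is lost at total rate $q_i(t)+q_j(t)$, giving the diagonal entry $-(q_i(t)+q_j(t))$ of $\bm A_\ell(t)$.

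Mechanism (iii) produces the off-diagonal entries, and is where the bookkeeping must be done carefully. Given a cherry of type $\ell mn$ with $m\le n$ and $(m,n)\neq(i,j)$, the ways to convert it into a cherry of type $\ell ij$ by a single mutation are exactly those in which one of the two leaf labels $m,n$ is already one of $i,j$ and the other leaf mutates into the remaining label. When $i=j$ this forces either $m=i$ with $n$ mutating to $i$ (rate $q_n^i(t)$) or $n=i$ with $m$ mutating to $i$ (rate $q_m^i(t)$), giving the entry $\delta_{m,i}q_n^i(t)+\delta_{n,i}q_m^i(t)$. When $i\neq j$ there are four distinguishable cases: $m=i$ with $n\to j$ at rate $q_n^j(t)$; $m=j$ with $n\to i$ at rate $q_n^i(t)$; $n=i$ with $m\to j$ at rate $q_m^j(t)$; $n=j$ with $m\to i$ at rate $q_m^i(t)$. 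Summing yields $\delta_{m,i}q_n^j(t)+\delta_{m,j}q_n^i(t)+\delta_{n,i}q_m^j(t)+\delta_{n,j}q_m^i(t)$, matching the stated off-diagonal entries. The convention $i\le j,\ m\le n$ ensures each unordered pair is counted exactly once, and the Kronecker deltas ensure that unreachable transitions (where neither endpoint of $(m,n)$ already coincides with an endpoint of $(i,j)$) contribute zero.

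Assembling these three contributions, taking expectations, dividing by $\Delta t$ and letting $\Delta t\to 0$ yields $\tfrac{d}{dt}\mu_\ell^{ij}(t)=-(q_i(t)+q_j(t))\mu_\ell^{ij}(t)+\sum_{(m,n)\neq(i,j)}[\bm A_\ell(t)]_{ij,mn}\mu_\ell^{mn}(t)+q_\ell^{ij}(t)\nu_\ell(t)$, which is exactly the coordinate form of the claimed matrix equation. The only technical obstacle is a consistent book-keeping of the canonical ordering $i\le j$ and the exhaustive enumeration of mutation transitions that respects it; the source and loss terms are essentially forced by the construction of the process. The leaf mean $\nu_\ell(t)$ that appears on the right-hand side is taken as given from Lemma~\ref{lem:meannudependant-eq}, so the system is closed in $\bm\mu_\ell$ once $\bm\nu$ is known.
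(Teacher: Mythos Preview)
Your argument is correct and follows essentially the same approach as the paper: both proceed by an infinitesimal balance on $C_\ell^{ij}$, identifying the birth-at-a-type-$\ell$-leaf source term, the $-(q_i+q_j)$ loss term from any event on either leaf, and the off-diagonal gains from single mutations on cherries of type $\ell mn$. Your enumeration of the four (resp.\ two) mutation transitions in the $i\neq j$ (resp.\ $i=j$) case is in fact more explicit than the paper's, which simply states the resulting coordinate ODEs and justifies them with a one-sentence description of the increase/decrease events.
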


\begin{proof}
We show that each $\mu_{\ell}^{ ij}(t)$ satisfies the following differential equation when $i=j$
$$
\frac{d\mu_{\ell}^{ij}(t)}{dt}=\!\!\!\!\!\sum_{\substack{m\leq n\\ (m,n)\neq(i,i)}}\!\!\!\! \big( \delta_{m,i} q_{n}^i(t)+ \delta_{n,i} q_{m}^i(t)\big)\mu_{\ell}^{ m n}-2 q_{i}(t)\mu_{\ell}^{ ii}(t)+ q_{\ell} ^{ii}(t)\nu_\ell(t),$$
and when $i\neq j$ it satisfies
$$
\frac{d\mu_{\ell}^{ij}(t)}{dt}=\!\!\!\!\!\sum_{\substack{m\leq n\\ (m,n)\neq(i,j)}}\!\!\!\! \big( \delta_{m,i}q_{n}^j(t)+ \delta_{m,j}q_{n}^i(t)+ \delta_{n,i}q_{m}^j(t)+ \delta_{n,j}q_{m}^i(t)\big)\mu_{\ell}^{ m n}(t)-(q_{i}(t)+q_{j}(t))\mu_{\ell}^{ ij}(t)+q_{\ell} ^{ij}(t)\nu_\ell(t).
$$
This can be seen from the fact that the number of cherries of type $\ell ij$ will increase by 1 iff a cherry of type $\ell ij$ is added by a birth event to a lineage of type $\ell$, or there is a mutation along a lineage of a cherry which from a cherry of some different type produces a cherry of type $\ell ij$.  The number of cherries of type $\ell ij$ will decrease by 1 iff there is a mutation along a lineage of a type $\ell ij$ cherry, or there is a birth event along one of its lineages producing a cherry of some different type.
\end{proof}

\begin{rem}\label{rem:Adiagdominant}
The matrix $\bm A_\ell(t)$ is diagonally dominant by columns, as:\, in a column $\ell m n$ every rate of the form $q_{m}^i(t)$ and every rate of the form $ q_{n}^i(t)$ appears exactly once (when $n=m$ each one appears twice) and the sum of these rates is less than or equal to $q_{n}(t)+q_{m}(t)$. We will use this fact in upcoming proofs.
\end{rem} 

\begin{prop}\label{prop:meangamma}
Let $\bm \gamma(t)=(\gamma_1^{1}(t),\ldots, \gamma_k^{k}(t))$ be the vector of pendant means \mbox{$\gamma_i^j(t):=\mbb E[L_i^j(t)]$}. Then, $\forall t\ge 0$
$$\frac{d\bm \gamma(t)}{dt}=\bm C(t)\bm \gamma(t) +\bm U(t)\bm \mu(t), $$
where $\bm C(t)$ is a $k^2\times k^2$ matrix with entries
$$[\bm C(t)]_{\ell m,ij}=\left\{\begin{array}{rl} -q_{m}(t)& \mbox { when } (\ell,m)=(i,j)\\   q_{j}^m(t)& \mbox { when } \ell=i,\, m\neq j\\ 0 &\mbox{ otherwise. }\end{array}\right.$$ and $\bm U(t)$ is a $k^2\times {{k+1}\choose 2}$ matrix with entries
$$[\bm U(t)]_{\ell m,\ell' i j}=\left\{\begin{array}{rl}\displaystyle 2\sum_{j_1\leq j_2}q_{m}^{j_1j_2}(t)& \mbox { when } \ell=\ell',\, m=i=j\\   \displaystyle \sum_{j_1\leq j_2}\ q_{i}^{j_1j_2}(t)& \mbox { when } \ell=\ell',\, m=j> i\\   \displaystyle \sum_{j_1\leq j_2}\  q_{j}^{j_1j_2}(t)& \mbox { when } \ell=\ell',\, m=i<j\\  0 &\mbox{ otherwise}.\end{array}\right.$$ 
\end{prop}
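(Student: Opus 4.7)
The plan is to derive a differential equation for each mean $\gamma_\ell^m(t)=\mbb{E}[L_\ell^m(t)]$ by enumerating the infinitesimal events in $(t,t+\Delta t]$ that affect $L_\ell^m(t)$, and then assemble the resulting $k^2$ scalar equations into matrix form to read off $\bm{C}(t)$ and $\bm{U}(t)$. This follows the same strategy used in the proof of Proposition~\ref{prop:meanmudependant}, with the extra twist that pendants can be created out of cherries by a birth event at one of the cherry's leaves.

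I would classify the relevant events into three groups. \emph{(i)} An event at the leaf of an existing pendant of type $\ell m$: either a birth event of any type $j_1j_2$ at that leaf, which destroys the pendant (its end-leaf ceases to be a leaf, becoming instead the branch-point of a newly formed cherry), or a mutation $m\to m'$ at that leaf, which turns the pendant into one of type $\ell m'$. Summed, these produce the per-pendant loss rate $q_m(t)=\sum_{j_1\le j_2}q_m^{j_1j_2}(t)+\sum_{m'\ne m}q_m^{m'}(t)$, consistent with the definition of $q_m(t)$ from Proposition~\ref{prop:meanmudependant}. \emph{(ii)} A mutation at the leaf of a pendant of type $\ell m'$ with $m'\ne m$ converts it to one of type $\ell m$ at rate $q_{m'}^{m}(t)$ per such pendant. \emph{(iii)} A birth event at one of the two leaves of a cherry of type $\ell' i j$ simultaneously spawns a new cherry at the birth site \emph{and} a new pendant whose branch-point type is $\ell'$ and whose leaf type is that of the non-active leaf of the former cherry.

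The key bookkeeping step is to determine, for each cherry type $\ell' i j$, exactly when an event in \emph{(iii)} contributes to $\gamma_\ell^m$: one needs $\ell'=\ell$ and the surviving leaf to be of type $m$. If $i=j=m$ then either of the two leaves may carry the birth event, yielding total rate $2\sum_{j_1\le j_2}q_m^{j_1j_2}(t)$; if $i<j$ with $j=m$ only the leaf of type $i$ contributes, at rate $\sum_{j_1\le j_2}q_i^{j_1j_2}(t)$; if $i<j$ with $i=m$ only the leaf of type $j$ contributes, at rate $\sum_{j_1\le j_2}q_j^{j_1j_2}(t)$; all other cherries contribute zero. These are precisely the nonzero entries of $\bm{U}(t)$ as given. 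Combining \emph{(i)}--\emph{(iii)}, taking conditional expectation, and letting $\Delta t\to 0$ then yields
$$\frac{d\gamma_\ell^m(t)}{dt}=-q_m(t)\,\gamma_\ell^m(t)+\sum_{m'\ne m}q_{m'}^{m}(t)\,\gamma_\ell^{m'}(t)+\sum_{i\le j}[\bm{U}(t)]_{\ell m,\ell ij}\,\mu_\ell^{ij}(t),$$
which is exactly the $(\ell,m)$-th row of the claimed matrix identity; the block-diagonal structure of $\bm{C}(t)$ (entries vanish unless $\ell=i$) reflects the fact that only leaf mutations move a pendant between types sharing the same branch-point type, since the branch-point type is fixed at the original birth event and no admissible event alters it later.

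The main obstacle I anticipate is the careful enumeration in group \emph{(iii)}: the factor of $2$ for the diagonal case $i=j=m$ is easy to miss, and one must verify that mutations occurring along the interior of the pendant edge (as opposed to at its leaf) do not change the pendant's type, since the topology here is defined through binary branch-points only and the pendant's type is read off its two end nodes alone. Once these points are handled, the remainder is a direct infinitesimal calculation entirely parallel to that in the proof of Proposition~\ref{prop:meanmudependant}.
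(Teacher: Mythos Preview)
Your proposal is correct and follows essentially the same approach as the paper: both derive the scalar ODE for $\gamma_\ell^m(t)$ by enumerating the infinitesimal events---loss via any event at the leaf of a type-$\ell m$ pendant, gain via mutation $m'\to m$ at the leaf of a type-$\ell m'$ pendant, and gain via a birth event at one leaf of a type-$\ell ij$ cherry (with the factor~$2$ when $i=j=m$)---and then read off the matrix form. Your discussion is in fact slightly more detailed than the paper's, which states the same scalar equation and justifies it with a one-paragraph enumeration of the same three event classes.
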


\begin{proof}
We show that each $\gamma_{\ell}^{ m}(t)$ satisfies
\begin{eqnarray*}
\frac{d\gamma_{\ell}^{ m}(t)}{dt}= \sum_{j\neq m}q_{j}^m(t)\gamma_{\ell}^{ j}(t)-q_{m}(t)\gamma_{\ell}^{ m}(t)+\sum_{i<m}\Big(\sum_{j_1\leq j_2}q_{i}^{j_1 j_2}(t)\Big)\mu_{\ell}^{ i m}(t)\\+\sum_{i>m}\Big(\sum_{j_1\leq j_2}q_{i}^{j_1 j_2}(t)\Big)\mu_{\ell}^{ m i}(t)+2\sum_{j_1\leq j_2} q_{m}^{j_1j_2}(t)\mu_{\ell}^{ m m}(t).\end{eqnarray*}
To see this, observe that the number of pendant edges of type $\ell m$ will increase by 1 if a mutation of type $jm$ occurs on a pendant edge of type $\ell j$. Also, it will increase by 1 if a birth event happens adding any cherry of type $ij_1j_2$ to any cherry of type $\ell i m$ $(i\neq m)$, or if a cherry of type $m j_1j_2$ is added to a cherry of type $\ell mm$. The number of pendant edges of type $\ell m$ will decrease by 1 iff any birth event adding a cherry or mutation occurs on a pendant edge of type $\ell m$. 
\end{proof}

\subsection{Long time asymptotics for the number of cherries and pendants}\label{subsec:Yulelimits}

We next consider what happens to the tree structure of the multi-type Yule process with mutations as $t\to\infty$. The random total number of leaves $\sum_{i=1}^k N_i(t)$ grows as well, so we need to consider the fraction of different types of cherries and pendants. We start with results in case the birth $\{q_i^{j_1j_2}\}_{i,j_1\le j_2\in\{1,\ldots,k\}}$ and mutation $\{q_i^j\}_{i\neq j\in\{1,\ldots,k\}}$ rates in the process are constant (time independent) and then generalize to the time varying case.

The matrix $\bm B(t)$ from Lemma~\ref{lem:meannudependant-eq} has nonnegative entries, except possibly for those on the diagonal. By the Perron-Frobenius theorem, if it is irreducible, there exists a largest simple eigenvalue $\lambda(t)$ of $\bm B(t)$ with right and left eigenvectors $\bm u(t), \bm v(t)$, respectively. We can assume that $\bm 1\cdot \bm u(t)=1$. In case that $\bm B(t)\equiv\bm B$ is time independent, we have the following result. Recall $\rho(t)$ is the mean total number of leaves in the tree.

\begin{lem}
\label{lem:limit-eta-tind}
If $\bm B(t)\equiv\bm B$ and is irreducible,  then $\bm \eta_\ell(t):= \displaystyle {\rho(t)}^{-1}{\bm \mu_\ell(t)}$, if it converges, satisfies
$$\lim_{t\rightarrow\infty} \bm \eta_\ell(t)=-u_\ell(\bm A_\ell-\lambda \bm I)^{-1} \bm {q}_{(\ell)},$$
where $\lambda$ is the largest real eigenvalue of $\bm B$ with coresponding right eigenvector $\bm u=(u_1,\ldots,u_k)$.
Furthermore, if $\bm \eta(t)=(\bm \eta_1(t),\ldots,\bm\eta_k(t))$, then $\displaystyle \bm \eta^{\star}(t):= {\rho(t)}^{-1}{\bm \gamma(t)}$, if it converges, satisfies
$$\lim_{t\rightarrow\infty} \bm \eta^\star(t)=-(\bm C-\lambda \bm I)^{-1}\bm U\lim_{t\rightarrow\infty}\bm \eta(t).$$
\end{lem}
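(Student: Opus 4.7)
The plan is to divide each of the ODEs for $\bm\mu_\ell(t)$ and $\bm\gamma(t)$ by $\rho(t)$, which converts them into ODEs for $\bm\eta_\ell(t)$ and $\bm\eta^\star(t)$ whose driving coefficients stabilize as $t\to\infty$; since the statement grants us convergence, the derivative on the left must tend to zero and the limits are identified by solving linear algebraic systems. The key inputs from $\bm B$ are the two asymptotic identities $\nu_\ell(t)/\rho(t)\to u_\ell$ and $\rho'(t)/\rho(t)\to\lambda$. Both come from Perron--Frobenius applied to $\bm B$: the off-diagonal entries given by Lemma~\ref{lem:meannudependant-eq} are nonnegative, so $\bm B$ is an irreducible Metzler matrix, and shifting by a scalar produces an irreducible nonnegative matrix whose Perron eigenvalue corresponds to a simple real dominant eigenvalue $\lambda$ of $\bm B$ with positive right eigenvector $\bm u$ (normalized so that $\bm 1\cdot\bm u=1$). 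The solution $\bm\nu(t)=e^{\bm Bt}\bm\nu(0)$ from Remark~\ref{rem:meannudependant} then has $\bm\nu(t)/\rho(t)\to\bm u$ and $\rho(t)\sim c\,e^{\lambda t}$, giving both limits at once.

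Applied to the equation from Proposition~\ref{prop:meanmudependant}, the quotient $\bm\eta_\ell(t)=\bm\mu_\ell(t)/\rho(t)$ satisfies
$$
\frac{d\bm\eta_\ell}{dt}\;=\;\bm A_\ell\bm\eta_\ell(t)\;-\;\frac{\rho'(t)}{\rho(t)}\,\bm\eta_\ell(t)\;+\;\bm q_{(\ell)}\,\frac{\nu_\ell(t)}{\rho(t)}.
$$
Sending $t\to\infty$ under the convergence hypothesis kills the left-hand side and produces $(\bm A_\ell-\lambda\bm I)\bm\eta_\ell^\infty=-u_\ell\,\bm q_{(\ell)}$, which inverts to the claimed formula. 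Invertibility is where Remark~\ref{rem:Adiagdominant} earns its keep: $\bm A_\ell$ is weakly diagonally dominant by columns with strictly negative diagonal entries $-(q_i+q_j)$, so Gershgorin applied to $\bm A_\ell^{\mathsf T}$ localizes its spectrum in the closed left half-plane, and the strict positivity of $\lambda$ then excludes it from the spectrum.

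The pendant claim is obtained by the same recipe starting from Proposition~\ref{prop:meangamma}: dividing $d\bm\gamma/dt=\bm C\bm\gamma+\bm U\bm\mu$ by $\rho(t)$ gives
$$
\frac{d\bm\eta^\star}{dt}\;=\;\bm C\bm\eta^\star(t)\;-\;\frac{\rho'(t)}{\rho(t)}\,\bm\eta^\star(t)\;+\;\bm U\,\frac{\bm\mu(t)}{\rho(t)},
$$
and the limit equation $(\bm C-\lambda\bm I)\bm\eta^{\star,\infty}=-\bm U\bm\eta^\infty$ comes from combining the stated convergence with the already established $\bm\mu(t)/\rho(t)\to\bm\eta^\infty$. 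Each column of $\bm C$ has diagonal entry $-q_m$ and off-diagonal mass $\sum_{j\neq m}q_j^m\le q_m$, so the same Gershgorin argument confines the spectrum of $\bm C$ to the closed left half-plane, making $\bm C-\lambda\bm I$ nonsingular for $\lambda>0$. The main obstacle that I expect to need the most care is verifying $\lambda>0$ in the first place, so that the Gershgorin bounds genuinely exclude $\lambda$ from both spectra; this follows because the column sums of $\bm B$ equal the total birth rates $\sum_{j_1\le j_2}q_{\ell_2}^{j_1j_2}$, which are strictly positive whenever the Yule tree produces any branching at all, and positive column sums of an irreducible Metzler matrix force its dominant real eigenvalue to be strictly positive.
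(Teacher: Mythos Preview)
Your argument is correct and follows essentially the same route as the paper: rewrite the ODEs from Propositions~\ref{prop:meanmudependant} and~\ref{prop:meangamma} in terms of $\bm\eta_\ell$ and $\bm\eta^\star$, use Perron--Frobenius on $\bm B$ to identify $\rho'(t)/\rho(t)\to\lambda$ and $\nu_\ell(t)/\rho(t)\to u_\ell$, pass to the limit, and invert using the column diagonal dominance of $\bm A_\ell$ and $\bm C$. The only substantive difference is that the paper argues $\lambda\ge 0$ from the monotonicity of $\rho(t)$, whereas you argue $\lambda>0$ strictly from the observation that the column sums of $\bm B$ equal the overall birth rates $\sum_{j_1\le j_2}q_{\ell_2}^{j_1j_2}>0$; your version is slightly sharper, since Remark~\ref{rem:Adiagdominant} only asserts weak diagonal dominance, and weak dominance alone would not exclude $0$ from the spectrum. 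Your Gershgorin phrasing is just a repackaging of the same diagonal-dominance fact.
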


\begin{proof}
Using $\bm \mu_\ell(t)=\rho(t)\bm \eta_\ell(t)$ in the differential equation for $\mu_\ell(t)$ from Proposition~\ref{prop:meanmudependant} we get
$$\frac{d\bm \eta_\ell(t)}{dt}=\big(\bm A_\ell-{\rho(t)}^{-1}\frac{d\rho(t)}{dt} \bm I\big)\bm \eta_\ell(t)+ {\rho(t)}^{-1}\bm {q}_{(\ell)}(t) \nu_\ell(t).$$
Assuming that $\lim_{t\rightarrow\infty}\bm \eta_\ell(t)$ exists, 
taking limit as $t\rightarrow\infty$ on both sides and using the fact that $\bm \eta_\ell(t)$ is continuous, we get
\begin{equation}\label{eq:differentialeta}
0=\Big(\bm A_\ell-\lim_{t\rightarrow\infty}{\rho(t)}^{-1}\frac{d\rho(t)}{dt} \bm I\Big)\lim_{t\rightarrow\infty}\bm \eta_\ell(t)+\bm {q}_{(\ell)}\lim_{t\rightarrow\infty}{\rho(t)}^{-1} \nu_\ell(t).
\end{equation}

Let $\bm J$ denote the Jordan representation form of the matrix $\bm B$, so that $\bm B=\bm P\bm J\bm P^{-1}$ and $\exp\{\bm B\}=\bm P\exp\{\bm J\}\bm P^{-1}$. By Corollary \ref{cor:meanrho},
\begin{eqnarray*}
\lim_{t\rightarrow\infty}{\rho(t)}^{-1}\frac{d\rho(t)}{dt}&=&\lim_{t\rightarrow\infty} \frac{\bm 1^\mathsf{T} \bm B\exp\{\bm Bt\}\bm \nu(0)}{\bm 1^\mathsf{T}\exp\{\bm Bt\}\bm \nu(0)}\\
&=&\frac{\bm 1^\mathsf{T}\lambda \bm u\bm v^\mathsf{T}\bm \nu(0)}{\bm 1^\mathsf{T} \bm u\bm v^\mathsf{T}\bm \nu(0)}=\lambda.
\end{eqnarray*}
Similarly, using Lemma~\ref{lem:meannudependant-eq}, 
\begin{eqnarray*}
\lim_{t\rightarrow\infty}{\rho(t)}^{-1}{\nu_\ell(t)}&=&\frac{\bm e_\ell\bm u\bm v^\mathsf{T}\bm e_a}{\bm 1^\mathsf{T} \bm u\bm v^\mathsf{T}\bm e_a}=u_\ell.
\end{eqnarray*}
We claim that $(\bm A_\ell-\lambda \bm I)$ is invertible. This is true because $\bm A_\ell$ is diagonally dominant by columns (see Remark \ref{rem:Adiagdominant}), and $\lambda\geq0$ ($\rho(t)$ is positive and increasing), which means that $(\bm A_\ell-\lambda \bm I)$ is diagonally dominant by columns as well. Hence, from (\ref{eq:differentialeta}) we get
$$\lim_{t\rightarrow\infty} \bm \eta_\ell(t)=-u_\ell(\bm A_\ell-\lambda \bm I)^{-1} \bm {q}_{(\ell)}.$$
The proof for $\lim_{t\rightarrow\infty} \bm\eta^\star(t)$ follows in the analogous steps, using $\bm \gamma(t)=\rho(t)\bm \eta^\star(t)$, the differential equation for $\bm \gamma(t)$ from Proposition~\ref{prop:meangamma} and the fact that $\bm C-\lambda \bm I$ is also diagonally dominant.
\end{proof}

To obtain a version of this result in the time varying case we need to make some assumptions on the behaviour of birth and mutation  rates in the long time limit.

\begin{thm}
\label{thm:limit-eta-tdepnotcomm}
If all the birth rates and mutation rates in the long term converge to 
limits $\{\lim_{t\rightarrow\infty} q_{i}^{j_1j_2}(t)\}_{i,j_1\le j_2\in\{1,\ldots,k\}}$ and $\{\lim_{t\rightarrow\infty}  q_{i}^{j}(t)\}_{i\neq j\in\{1,\ldots,k\}}$ 
such that the matrix $\lim_{t\rightarrow\infty} \bm B(t)$ is irreducible with maximum eigenvalue $\lambda$ and corresponding right and left eigenvectors $\bm u$ and $\bm v$ respectively;
then, assuming the limits below exist, 
$$\bm w_\ell:=\lim_{t\rightarrow\infty} \bm \eta_\ell(t)=-u_\ell \lim_{t\rightarrow\infty}(\bm A_\ell(t)-\lambda \bm I)^{-1} \lim_{t\rightarrow\infty}\bm {q}_{(\ell)}(t),$$
and
$$\bm w^\star:=\lim_{t\rightarrow\infty} \bm \eta^\star(t)=-\lim_{t\rightarrow\infty}(\bm C(t)-\lambda \bm I)^{-1}\bm U(t)\lim_{t\rightarrow\infty}\bm \eta(t),$$
\end{thm}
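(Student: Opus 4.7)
The plan is to mirror the structure of the proof of Lemma~\ref{lem:limit-eta-tind}, the key difference being that now $\bm B(t)$, $\bm A_\ell(t)$ and $\bm C(t)$ are time-dependent and do not in general commute with their own integrals, so matrix exponentials are unavailable. First I would substitute $\bm \mu_\ell(t)=\rho(t)\bm \eta_\ell(t)$ into the ODE of Proposition~\ref{prop:meanmudependant} and use the identity $\frac{d\rho(t)}{dt}=\bm 1^\mathsf{T}\bm B(t)\bm \nu(t)$ (a direct consequence of Lemma~\ref{lem:meannudependant-eq}) to obtain
$$\frac{d\bm \eta_\ell(t)}{dt}=\Big(\bm A_\ell(t)-\rho(t)^{-1}\tfrac{d\rho(t)}{dt}\bm I\Big)\bm \eta_\ell(t)+\rho(t)^{-1}\bm {q}_{(\ell)}(t)\,\nu_\ell(t).$$
Since the limit $\bm w_\ell=\lim_{t\to\infty}\bm \eta_\ell(t)$ is assumed to exist, the left side tends to $\bm 0$, so passing to the limit on the right will give a linear algebraic equation for $\bm w_\ell$, provided the two scalar limits $\lim_{t\to\infty}\rho(t)^{-1}\tfrac{d\rho(t)}{dt}$ and $\lim_{t\to\infty}\rho(t)^{-1}\nu_\ell(t)$ can be identified.

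The central step is to show that these two scalar limits equal $\lambda$ and $u_\ell$ respectively, where $\lambda$ and $\bm u$ are the Perron eigenvalue and (normalized) right eigenvector of $\bm B_\infty:=\lim_{t\to\infty}\bm B(t)$. In the time-independent case of Lemma~\ref{lem:limit-eta-tind} this followed directly from the spectral decomposition of $\exp(\bm B t)$. Here I would invoke the theory of asymptotically autonomous linear ODEs: since $\bm B_\infty$ is irreducible with a simple dominant eigenvalue $\lambda$ that strictly exceeds the real parts of all its other eigenvalues (Perron--Frobenius), and since $\bm B(t)-\bm B_\infty\to \bm 0$, a Levinson/Hartman-type result gives that the fundamental solution of $d\bm \nu/dt=\bm B(t)\bm \nu$ factors, up to lower-order terms, as a rank-one projection onto $\bm u\bm v^\mathsf{T}$ times a scalar factor whose logarithmic derivative tends to $\lambda$. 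Consequently $\bm \nu(t)/\bigl(\bm 1^\mathsf{T}\bm \nu(t)\bigr)\to \bm u$, yielding $\rho(t)^{-1}\nu_\ell(t)\to u_\ell$, and $\rho(t)^{-1}\tfrac{d\rho(t)}{dt}\to \bm 1^\mathsf{T}\bm B_\infty\bm u=\lambda$ (using $\bm 1^\mathsf{T}\bm u=1$).

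Combining these limits with the stationarity equation gives
$$\bm 0=(\bm A_\ell(\infty)-\lambda\bm I)\bm w_\ell+u_\ell\,\bm {q}_{(\ell)}(\infty),$$
and it remains to invert $\bm A_\ell(\infty)-\lambda\bm I$. This is available from Remark~\ref{rem:Adiagdominant}: since $\bm A_\ell(t)$ is diagonally dominant by columns for every $t$, the same holds in the limit, and subtracting $\lambda\bm I$ with $\lambda\ge 0$ (because $\rho(t)$ is positive and nondecreasing) only strengthens the diagonal dominance, hence invertibility. Solving yields the first formula for $\bm w_\ell$. The pendant statement then follows by the analogous substitution $\bm \gamma(t)=\rho(t)\bm \eta^\star(t)$ in the ODE of Proposition~\ref{prop:meangamma}, using the limits already established for $\bm \eta(t)$ together with the diagonal dominance of $\bm C(\infty)-\lambda\bm I$.

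The main obstacle is the middle paragraph: confirming that in the non-autonomous regime the two ratio limits still coincide with the Perron data of $\bm B_\infty$, despite the fact that solutions of $d\bm \nu/dt=\bm B(t)\bm \nu$ cannot be written as a single matrix exponential. The cleanest route is to cite an asymptotically autonomous theorem exploiting the spectral gap of $\bm B_\infty$; the only subtlety is that $\bm \nu(0)$ must have nonzero projection onto the dominant left eigenvector $\bm v$, which is automatic because $\bm \nu(0)$ has a nonnegative nontrivial entry and $\bm v$ is componentwise positive by Perron--Frobenius. Once this is in hand, the algebraic inversion step is identical to the time-independent proof.
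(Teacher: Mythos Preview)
Your proposal is correct and follows the same overall architecture as the paper's proof: substitute $\bm\mu_\ell=\rho\,\bm\eta_\ell$ into the ODE of Proposition~\ref{prop:meanmudependant}, let $t\to\infty$ to get a linear equation for $\bm w_\ell$, identify the two scalar limits as $\lambda$ and $u_\ell$, then invert using the column diagonal dominance of $\bm A_\ell-\lambda\bm I$ (Remark~\ref{rem:Adiagdominant}); the pendant case is handled identically.

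The one substantive difference is how the scalar limits $\rho(t)^{-1}\frac{d\rho}{dt}\to\lambda$ and $\rho(t)^{-1}\nu_\ell(t)\to u_\ell$ are obtained. You appeal to Levinson/Hartman-type asymptotics for non-autonomous linear systems, exploiting the spectral gap of $\bm B_\infty$. The paper instead argues directly and more elementarily: it sets $\bm\beta(t):=\rho(t)^{-1}\bm\nu(t)$, writes the ODE
\[
\frac{d\bm\beta}{dt}=\bm B(t)\bm\beta(t)-\rho(t)^{-1}\frac{d\rho}{dt}\,\bm\beta(t),
\]
passes to the limit (under the standing ``assuming the limits below exist'' hypothesis), and observes that the resulting equation says $\lim\bm\beta$ is a positive eigenvector of $\bm B_\infty$ with eigenvalue $\lim\rho^{-1}\rho'$; Perron--Frobenius uniqueness then forces these to be $\bm u$ and $\lambda$. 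This avoids any external asymptotic-ODE machinery and keeps the proof self-contained, at the cost of treating the existence of $\lim\bm\beta(t)$ as part of the hypothesis rather than deriving it. Your route, by contrast, actually \emph{establishes} that limit from the spectral gap, which is more informative but imports a heavier theorem than the paper needs.
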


\begin{proof} The proof is similar to that in the constant rate case. 
Replacing $\bm \mu_\ell(t)=\rho(t)\bm \eta_\ell(t)$ in the differential equation  for $\mu_\ell(t)$ from Proposition~\ref{prop:meanmudependant},  we get
$$
\frac{d\bm \eta_\ell(t)}{dt}=\big(\bm A_\ell(t)-{\rho(t)}^{-1}\frac{d \rho(t)}{dt} \bm I\big)\bm \eta_\ell(t)+ {\rho(t)}^{-1}{\bm {q}_{(\ell)}(t) \nu_\ell(t)}
$$
Since $\lim_{t\rightarrow\infty}\bm \eta_\ell(t)$ exists, taking $t\rightarrow\infty$ on both sides, we get
\begin{equation}\label{eq:differentialetadependant}
0=\Big(\lim_{t\rightarrow\infty} \bm A_\ell(t)-\lim_{t\rightarrow\infty}{\rho(t)}^{-1}\frac{d \rho(t)}{dt}\bm I\Big)\lim_{t\rightarrow\infty}\bm \eta_\ell(t)+\lim_{t\rightarrow\infty}\bm {q}_{(\ell)}(t)\lim_{t\rightarrow\infty}{\rho(t)}^{-1}{ \nu_\ell(t)}
\end{equation}
From Lemma~\ref{lem:meannudependant-eq} we have $\frac{d\bm{\nu}(t)}{dt}=\bm B(t)\bm{\nu}(t)$, and defining $\bm{\beta}(t):={\rho(t)}^{-1}{\bm{\nu}(t)}$, we have 
$$
\frac{d\bm{ \beta}(t)}{dt}=\bm B(t) \bm{\beta}(t)-{\rho(t)}^{-1}\frac{d\rho(t)}{dt}\bm{\beta}(t),$$
which taking ${t\rightarrow\infty}$  on both sides gives
$$\lim_{t\rightarrow\infty}{\rho(t)}^{-1}\frac{d \rho(t)}{dt}\,\lim_{t\rightarrow\infty}\bm{ \beta}(t)=\lim_{t\rightarrow\infty} \bm B(t)\,\lim_{t\rightarrow\infty}\bm{\beta}(t).$$
By assumption the matrix $\lim_{t\rightarrow\infty} \bm B(t)$ has all finite entries and is irreducible, hence the vector $\lim_{t\rightarrow\infty}\bm{ \beta}(t)$ only has positive entries and the Perron-Frobenious Theorem implies that this vector is the eigenvector $\bm u$ and that $\lambda = \lim_{t\rightarrow\infty}{\rho(t)}^{-1}\frac{d\rho(t)}{dt}$.

We now claim that $(\bm A_\ell(t)-\lambda \bm I)$ is invertible. This is true because $\bm A_\ell(t)$ is diagonally dominant by columns (again see Remark \ref{rem:Adiagdominant}), and $\lambda \geq0$ (since $\rho(t)$ is positive and increasing), which means that $(\bm A_\ell(t)-\lambda \bm I)$ is diagonally dominant by columns as well. Hence, (\ref{eq:differentialetadependant}) implies
$$\lim_{t\rightarrow\infty} \bm \eta_\ell(t)=-u_\ell\lim_{t\rightarrow\infty}(\bm A_\ell(t)-\lambda \bm I)^{-1} \lim_{t\rightarrow\infty}\bm {q}_{(\ell)}(t)$$
as claimed.

The proof for $\lim_{t\rightarrow\infty} \bm\eta^\star(t)$ follows in the analogous steps, replacing $\bm \gamma(t)=\rho(t)\bm \eta^\star(t)$ in the differential equation for $\bm \gamma(t)$ from Proposition~\ref{prop:meangamma} and using the fact that $\bm C-\lambda \bm I$ is also diagonally dominant.
\end{proof}

\begin{rem}
In the special case that $\forall t\ge 0$ the matrices $\bm B(t)$ are irreducible, mutually diagonalizable, to matrices $\bm D(t)$, and have the same right and left eigenvectors $\bm u,\bm v$ for their corresponding maximum eigenvalues $\lambda(t)$, we can give a shorter proof: from Corollary~\ref{cor:meanrho};
\begin{eqnarray*}
\lim_{t\rightarrow\infty}\frac{\frac{d\rho(t)}{dt}}{\rho(t)}\!\!\!\!\!&=&\!\!\!\!\lim_{t\rightarrow\infty} \frac{\bm 1^\mathsf{T}\bm B(t)\exp\{\int_0^t \bm B(\tau) d\tau\}\bm \nu(0)}{\bm 1^\mathsf{T}\exp\{\int_0^t \bm B(\tau) d\tau\}\bm  \nu(0)}
=\lim_{t\rightarrow\infty} \frac{\bm 1^\mathsf{T}\bm P\bm D(t)\bm P^{-1}\bm P\exp\{\int_0^t \bm D(\tau) d\tau\}\bm P^{-1}\bm  \nu(0)}{\bm 1^\mathsf{T}\bm P\exp\{\int_0^t \bm D(\tau) d\tau\}\bm P^{-1}\bm  \nu(0)}\\
&=&\!\!\!\lim_{t\rightarrow\infty} \frac{\bm 1^\mathsf{T}\bm P\bm D(t)\exp\{\int_0^t \bm D(\tau) d\tau\}\bm P^{-1}\bm  \nu(0)}{\bm 1^\mathsf{T}\bm P\exp\{\int_0^t \bm D(\tau) d\tau\}\bm P^{-1}\bm  \nu(0)}
=\lim_{t\rightarrow\infty}\lambda(t)\frac{\bm 1^\mathsf{T}\bm u\bm v^\mathsf{T}\bm  \nu(0)}{\bm 1^\mathsf{T} \bm u\bm v^\mathsf{T}\bm  \nu(0)}=\lim_{t\rightarrow\infty}\lambda(t),
\end{eqnarray*}
 since the dominating terms are only those involving $ e^{\int_0^t\lambda(\tau)d\tau}$
with $ \bm u,\, \bm v$ as right and left eigenvectors of $\bm B(t)$ respectively; also,
\begin{eqnarray*}
\lim_{t\rightarrow\infty}\frac{ \nu_\ell(t)}{\rho(t)}=\frac{\bm e_\ell\bm u\bm v^\mathsf{T}\bm e_a}{\bm 1^\mathsf{T} \bm u\bm v^\mathsf{T}\bm e_a}=u_\ell,
\end{eqnarray*}
and substituting these in (\ref{eq:differentialetadependant}) gives the desired result.
\end{rem}

The asymptotic results allow one to infer the birth and mutation rate parameters of the models based on the number of cherries and pendants. Note that in the constant rate case, we have $k^2(k+1)/2+ k(k-1)$ parameters, and we have $k^2(k+1)/2+ k^2$ statistics which satisfy the relation: $2\sum_{\substack{\ell, i\leq j}}\eta_{\ell}^{ij}(t)+\sum_{i\neq j}{\eta}_i^{j}(t)=1$. One nonetheless needs some form of additional information in order to infer the model parameters, as  in the following result. Let $$r_{i}(t):=\sum_{j_1\leq j_2} q_{i}^{j_1,j_2}(t), \;\;i\in\{1,\ldots,k\}$$ denote the overall birth rates for each type.

\begin{cor}\label{cor:contsinferrence}
If the long term birth rates $\lim_{t\rightarrow\infty}r_{i}(t)$ and the maximum real eigenvalue $\lambda$ of $\lim_{t\rightarrow\infty}\bm B(t)$ are known, then the limits of the birth and mutation rates can be expressed in terms of the limiting fractions of cherries and pendants $\bm w=\lim_{t\rightarrow\infty} \bm \eta(t)$ and $\bm w^\star=\lim_{t\rightarrow\infty} \bm \eta^\star(t)$, where $\bm w_\ell=[w_\ell^{11}, \ldots w_\ell^{kk}]^{\rm T}$, $\forall \ell\in\{1,\ldots,k\}$ and $\bm w^\star=[w_1^1, \ldots, w_k^k]$.
\end{cor}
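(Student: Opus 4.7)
The plan is to invert the two limit identities from Theorem~\ref{thm:limit-eta-tdepnotcomm}, treating the birth rates $\{q_\ell^{j_1 j_2}\}$ and mutation rates $\{q_i^j\}_{i\neq j}$ as unknowns and the limiting fractions $\bm w,\bm w^\star$, the eigenvalue $\lambda$, the total birth rates $\{r_i\}$, and the leaf-type fractions $\bm u=\lim_{t\to\infty}\bm\nu(t)/\rho(t)$ as data. Although $\bm u$ is not named in the hypothesis, its coordinates $u_\ell$ equal the long-run proportions of leaves of type $\ell$ and are thus directly observable from the tree, so no extra assumption is needed.

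The first step is to extract the mutation rates from the pendant identity. Rearranging,
\[
\bm C\,\bm w^\star \;=\; \lambda\,\bm w^\star - \bm U\,\bm w .
\]
The key observation is that the entries of $\bm U$ involve only the sums $\sum_{j_1\le j_2} q_i^{j_1 j_2}=r_i$, which are known, so the right-hand side is fully determined by the data. Writing the left-hand side coordinate-wise using the description of $\bm C$ from Proposition~\ref{prop:meangamma},
\[
\bigl[\bm C\,\bm w^\star\bigr]_{\ell m}
\;=\;
-\Bigl(r_m+\sum_{i\neq m} q_m^i\Bigr) w^\star_{\ell m} \;+\; \sum_{j\neq m} q_j^m\, w^\star_{\ell j},
\]
turning the identity into a linear system in the $k(k-1)$ mutation rates. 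Invertibility of the relevant blocks follows from the diagonal dominance of $\bm C-\lambda\bm I$ already exploited in the proof of Theorem~\ref{thm:limit-eta-tdepnotcomm}, so each $q_j^m$ can be solved for as an explicit linear functional of the coordinates of $\bm w^\star$ and $\bm w$ with coefficients depending on $\lambda$ and $\{r_i\}$.

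In the second step, with all mutation rates in hand the matrix $\bm A_\ell$ from Proposition~\ref{prop:meanmudependant} is fully determined for every $\ell$, and the cherry identity from Theorem~\ref{thm:limit-eta-tdepnotcomm} inverts directly to
\[
\bm q_{(\ell)} \;=\; -\frac{1}{u_\ell}\bigl(\bm A_\ell-\lambda\bm I\bigr)\,\bm w_\ell,
\]
reading off each birth rate $q_\ell^{j_1 j_2}$ as a linear functional of $\bm w_\ell$ with coefficients depending on the (now known) mutation rates, on $\lambda$, and on $u_\ell$. Composing the two steps gives the advertised expression of all birth and mutation rates in terms of the limiting cherry and pendant fractions.

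The delicate point is the accounting in the first step: the pendant system supplies $k^2$ scalar equations for only $k(k-1)$ mutation-rate unknowns, so it is overdetermined by exactly $k$ equations. I expect these extra constraints to be absorbed automatically by the normalisations $\sum_\ell u_\ell=1$ and $2\sum_{\ell,i\le j}\eta_\ell^{ij}+\sum_{i\neq j}\eta_i^j=1$, together with the defining relation $q_m=r_m+\sum_{j\neq m}q_m^j$. Verifying that the overdetermined system is consistent and that the block inversion of $\bm C-\lambda\bm I$ agrees across the redundant rows is the only genuinely non-trivial part of the argument; once this is done, everything else is routine linear algebra built on top of Theorem~\ref{thm:limit-eta-tdepnotcomm}.
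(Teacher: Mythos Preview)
Your approach is essentially the same as the paper's: first solve the pendant system $(\bm C-\lambda\bm I)\bm w^\star+\bm U\bm w=\bm 0$ for the mutation rates (using that $\bm U$ depends only on the known $r_i$), then plug these into the cherry system to read off $\bm q_{(\ell)}$. The paper differs only in that it writes $u_\ell$ explicitly as a linear combination of the coordinates of $\bm w$ and $\bm w^\star$ (namely $u_\ell=2\sum_i w_i^{\ell\ell}+\sum_{i,\,j<\ell}w_i^{j\ell}+\sum_{i,\,j>\ell}w_i^{\ell j}+\sum_{i\neq\ell}w_i^{\ell}$), rather than appealing to observability; and it does not raise or resolve the overdetermination issue you flag---it simply asserts solvability. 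One caution: your invocation of the diagonal dominance of $\bm C-\lambda\bm I$ is not quite on point here, since that property establishes invertibility when solving for $\bm w^\star$, not when solving for the mutation rates, which enter the equations in a different pattern; the paper is equally silent on this.
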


\begin{proof}
Observe that we can express $\bm C$ and $\bm U$ in terms of $r_i(t)$ as
$$[\bm C(t)]_{\ell m,ij}=\left\{\begin{array}{cl} -r_{m}(t)-\displaystyle\sum_{i\neq m}q_{m}^i(t)& \mbox { when } (\ell,m)=(i,j).\\    q_{j}^m(t)& \mbox { when } \ell=i,\, m\neq j.\\ \\0 &\mbox{ otherwise. }\end{array}\right.$$ and,
$$[\bm U(t)]_{\ell m,\ell' i j}=\left\{\begin{array}{cl}\displaystyle 2r_{m}(t)& \mbox { when } \ell=\ell',\, m=i=j.\\  \\ r_{i}(t)& \mbox { when } \ell=\ell',\, m=j> i.\\  \\  r_{j}(t)& \mbox { when } \ell=\ell',\, m=i<j.\\ \\ 0 &\mbox{ otherwise}.\end{array}\right.$$ 

Since $\lambda$ is known, Theorem~\ref{thm:limit-eta-tdepnotcomm} implies we have 
$$\lim_{t\rightarrow\infty}(\bm C(t)-\lambda \bm I)\bm w^\star +\lim_{t\rightarrow\infty}\bm U(t)\bm w=\bm 0$$
a linear system which, knowing the values of  $\lim_{t\rightarrow\infty}r_{i}(t)$ and $\lambda$, and given the values of $\bm w$ and $\bm w^\star$ from statistics of cherries and pendants,  depends only on the limits of the mutation rates $\lim_{t\rightarrow\infty}q_{i}^j(t)$, $i\neq j\in\{1,\ldots,k\}$. 

For each solution of this system in terms of the limiting mutation rates, we will have the values of $\lim_{t\rightarrow}q_{(\ell)}(t)$ for $\ell\in\{1,\ldots,k\}$ which can subsequently be used  in each of the systems
$$\lim_{t\rightarrow\infty}(\bm A_\ell(t)-\lambda \bm I)\bm w_\ell + u_\ell \lim_{t\rightarrow\infty}\bm{ q}_{(\ell)}(t)=\bm 0,\;\; \forall \ell\in\{1,\ldots,k\},$$
which is in fact a linear system in the branching rates because 
$$u_\ell=2\sum_{i}w_i^{\ell\ell}+\sum_{\substack{i, j<\ell}}w_i^{j\ell }+\sum_{\substack{i, j>\ell}}w_i^{\ell j}+\sum_{i\neq \ell} w_i^{\ell}.$$ 
It is therefore possible to get solutions of this system in terms of the limiting birth rates by expressing in terms of vectors $\bm w, \bm w^*$ as claimed.
\end{proof}

\begin{rem}\label{rem:reversibility-type-independent}
In the special case  that the overall birth rates $r_i(t)\equiv r_i\, \forall t $ are constants and $r_i\equiv r\, \forall i\in\{1,\ldots,k\}$  are independent of type, the maximum eigenvalue of $\bm B$ is simply $\lambda=r$, so in order to infer the birth and mutation rates we only need to know the overall growth rate $r$ and the statistics on the fractions of cherries and pendants. \end{rem}

\

\subsection{Some special cases of multi-type Yule models}

To illustrate how the asymptotic fractions of cherries and pendants can be used to infer the birth $\{q_i^{j_1j_2}\}_{i,j_1\le j_2\in\{1,\ldots,k\}}$ and mutation $\{q_i^j\}{i\neq j\in\{1,\ldots,k\}}$ rates in the model we consider two particular cases of the {\it `symmetric change of type'} models with $k=2$. We will assume that the two overall birth rates $r_i=\sum_{1\le j_1\le j_2\le 2}q_i^{j_1j_2}$ are independent of the type $r_1=r_2 =: r$, that birth rates are symmetric in parent type \,$\{q_1^{11}=q_2^{22}, q_1^{12}=q_2^{12}\}$, and that the same holds for mutation rates\, $\{q_1^2=q_2^1\}$.  We consider the following two such models:

(a) `cladogenetic change' model in which change in type can only occur at birth events and occurs independently for the offspring and parent: $$q_1^2=q_2^1=0,\quad q_1^{11}=q_2^{22}=r\,(1-p)^2,\, q_1^{12}=q_2^{12}=r\, 2p(1-p),\, q_1^{22}=q_2^{11}= rp^2$$ where $p\in(0,1)$ is the probability of type change at a birth event; 

(b) `anagenetic change' model in which change in type can only occur along the lineage: 
$$q_1^{11}=q_2^{22}=r,\, q_1^{12}=q_1^{22}=q_2^{12}=q_2^{11}=0, \quad q_1^2=q_2^1=rp$$ where $p$ is the relative rate of mutation along a lineage.\\

\noindent (a) Since in the cladogenetic case all mutation rates are zero, by Corollary~\ref{cor:contsinferrence} we only need to solve the system of equations $\{(\bm A_\ell-\lambda\bm I)\bm w_\ell + u_\ell \bm{q}_{(\ell)}=\bm 0\}_{ \ell=1,2}$ for the rates $\bm q_{(1)}, \bm q_{(2)}$. We have that  the matrix $\bm B$ is 
$$\bm B=\left[\begin{array}{cc} q_1^{11}-q_1^{22}-q_1^2 & 2q_2^{11}+q_2^{12}+q_2^1\\ 2q_1^{22}+q_1^{12}+q_1^2 & q_2^{22}-q_2^{11}-q_2^1 \end{array}\right]=r\left[\begin{array}{cc} 1-2p & 2p\\ 2p& 1-2p\end{array}\right],$$
with eigenvalue $\lambda=r$ and corresponding right eigenvector $u=[1/2,1/2]^{\rm T}$.
We have $q_1=q_2=r$ and the matrices $\bm A_1, \bm A_2$ are
$$\bm A_1=\bm A_2=\left[\begin{array}{ccc}-(q_1+q_1)&0&0\\0&-(q_1+q_2)&0\\0&0&-(q_2+q_2)\end{array}\right]=\left[\begin{array}{ccc}-2r&0&0\\0&-2r&0\\0&0&-2r\end{array}\right].$$
Solving the above system of equations for $q_{(1)}, q_{(2)}$ in terms of the asymptotic fractions of cherries and pendants $\bm w_\ell=[w_\ell^{11},w_\ell^{12},w_\ell^{22}]$ for $\ell\in\{1,2\}$ and $\bm w^\star=[w_1^1,w_1^2,w_2^1,w_2^2]$ gives $q_{(\ell)}=-2(\bm A_\ell-r\bm I)\bm w_\ell=-3\bm A_\ell\bm w_\ell=6r[w_\ell^{11},w_\ell^{12},w_\ell^{22}]^{\rm T}$, for $\ell=1,2$. Birth rates then are $$q_\ell^{j_1j_2}=6r w_\ell^{j_1j_2}\;\;\mbox{ for }\, \ell, j_1\le j_2\in\{1,2\}$$
This implies that the asymptotic fractions of cherries together with $p$ satisfy
$$p=1-\sqrt{6w_1^{11}}=\sqrt{6w_1^{22}}=\frac{1}{2}(1\pm\sqrt{1-12w_1^{12}}).$$
Note that if we ignore edge lengths in this tree, we essentially get the random discrete tree arising from the symmetric Markov propagation model, briefly discussed at the end of Subsection~\ref{subsec:ERMlimits}, in which the propagation matrix $\bm S$ is symmetric with $s_{12}=s_{21}=p,\; s_{11}=s_{22}=1-p$.\\

\noindent (b) In the anagenetic case, by Corollary~\ref{cor:contsinferrence} we need to solve the system of equations $(\bm C-\lambda\bm I)\bm w^\star+\bm U\bm w=\bm 0$ for the mutation rates $q_1^2=q_2^1=rp$. The matrix $\bm B$ is 
$$\bm B=\left[\begin{array}{cc} q_1^{11}-q_1^{22}-q_1^2 & 2q_2^{11}+q_2^{12}+q_2^1\\ 2q_1^{22}+q_1^{12}+q_1^2 & q_2^{22}-q_2^{11}-q_2^1 \end{array}\right]=r\left[\begin{array}{cc} 1-p&p  \\ p &1-p\end{array}\right],$$
with eigenvalue $\lambda=r$ and corresponding right eigenvector $u=[1/2,1/2]^{\rm T}$.
Also $q_1=q_2=r+rp$, the matrix $\bm C-\lambda\bm I=\bm C-r\bm I$ is 
$$\bm C-r\bm I=\left[\begin{array}{cccc}-q_1-r&q_2^1&0&0\\q_1^2&-q_2-r&0&0\\0&0&-q_1-r&q_2^1\\0&0&q_1^2&-q_2-r \end{array}\right]=r\left[\begin{array}{cccc}-(2+p)&p&0&0\\p&-(2+p)&0&0\\0&0&-(2+p)&p\\0&0&p&-(2+p) \end{array}\right]$$
and the matrix $\bm U$ is
$$\bm U=\left[\begin{array}{cccccc}2q_1^{11}&q_2^{22}&0&0&0&0\\0&q_1^{11}&2q_2^{22}&0&0&0\\0&0&0&2q_1^{11}&q_2^{22}&0\\0&0&0&0&q_1^{11}&2q_2^{22} \end{array}\right]
=\left[\begin{array}{cccccc}2r&r&0&0&0&0\\0&r&2r&0&0&0\\0&0&0&2r&r&0\\0&0&0&0&r&2r \end{array}\right]$$
Solving the above system for $q_1^2=q_2^1=rp$ in terms of the asymptotic fractions of cherries and pendants $\bm w_\ell=[w_\ell^{11},w_\ell^{12},w_\ell^{22}]$ for $\ell\in\{1,2\}$ and $\bm w^\star=[w_1^1,w_1^2,w_2^1,w_2^2]$ implies that the asymptotic fractions of cherries and pendants as well as $p$ satisfy
$$p=\frac{2w_1^{11}+w_1^{12}-2w_1^1}{w_1^1-w_1^2}=\frac{w_1^{12}+2w_1^{22}-2w_1^2}{w_1^2-w_1^1}=\frac{2w_2^{22}+w_2^{12}-2w_2^1}{w_2^1-w_2^2}=\frac{w_2^{12}+2w_2^{22}-2w_2^2}{w_2^2-w_2^1}.$$
The matrices $\bm A_1, \bm A_2$ are
$$\bm A_1=\bm A_2=\left[\begin{array}{ccc}-(q_1+q_1)&0&0\\0&-(q_1+q_2)&0\\0&0&-(q_2+q_2)\end{array}\right]=r\left[\begin{array}{ccc}-2(1+p)&0&0\\0&-2(1+p)&0\\0&0&-2(1+p)\end{array}\right]$$
and the value of $p$ should make the system of equations $\{(\bm A_\ell-r\bm I)\bm w_\ell + \frac12 \bm{q}_{(\ell)}=\bm 0\}_{ \ell=1,2}$ with $\bm q_{(1)}=[r,0,0]^{\rm T}, \bm q_{(2)}=[0,0,r]^{\rm T}$ a consistent one.
With $p$ as above, birth and mutation rates then are 
$$q_1^{11}=q_2^{22}=r, \, q_1^{12}=q_1^{22}=q_2^{12}=q_2^{11}=0, \quad q_1^2=q_2^1=rp.$$

Our results can be used together with what is previously known about predictive accuracy of a reconstruction method, such as maximum parsimony, majority rule and maximum likelihood, for the ancestral states. Predictive accuracy is measured in terms of the expected value (over all sample trees in the random model) of the probability that the predicted type of the root is correct. There are a number of known results (\cite{GascSteel}, \cite{MossSteel2}) on when a  reconstruction method for the type of the root in the tree is more accurate than a uniform guess on its value. 
For the above models of symmetric change of type (with $k=2$) the results of \cite{GascSteel} state that the predictive accuracy of the maximum parsimony method is asymptotically 1/2 iff $r\le 6s$; and the predictive accuracy of any method is asymptotically 1/2 if $r\le 4s$; where $s=r\, p$ denotes the substitution rate in this symmetric propagation model. Results of \cite{MossSteel2} state that majority rule is more accurate than a uniformly random guess iff $r>4s$. Our expressions for $p=s/r$ allow one to approximately determine whether in a given tree the type of the root can be accurately predicted by one of these methods or not.\\

Unfortunately, inference of birth and mutation rates cannot be used in the {\it `asymmetric change of type'} models, such as:

(c) cladogenetic change with  $$q_1^2=q_2^1=0,\quad q_1^{11}=r, q_1^{12}=q_1^{22}=0,\, q_2^{22}=r\,(1-p)^2, q_2^{12}=r\, 2p(1-p), q_2^{11}= rp^2$$ 
here the matrix $\bm B=r\left[\begin{array}{cc} 1 &2p\\ 0& 1-2p\end{array}\\\right]$ still has maximal eigenvalue $\lambda=r$ but is reducible; and

(d) anagenetic change with
$$q_1^{11}=q_2^{22}=r,\, q_1^{12}=q_1^{22}=q_2^{12}=q_2^{11}=0, \quad q_1^2=0, \, q_2^1=rp$$ 
where the matrix $\bm B=r\left[\begin{array}{cc} 1 &p\\0 &1-p\end{array}\right]$ is reducible as well.

\

\subsection{Comparison for numbers of cherries in different models}

Consider a general multi-type Yule tree on $k=2$ types but without mutations. Its overall birth rates $q_1(t)=\sum_{j_1\le j_2}q_1^{j_1j_2}(t)$ and $q_2(t)=\sum_{j_1\le j_2}q_2^{j_1j_2}(t)$ are generally not the same, which implies that the probabilities at which lineages of each type are chosen to be the next one to give birth are not the same (there is `non-neutrality' in types). Let $a_1(t):=q_1(t)/(q_1(t)+q_2(t))$ and $a_2(t):=q_2(t)/(q_1(t)+q_2(t))=1-a_1(t)$ denote the weights proportional which lineages of types 1 and 2, respectively, get chosen to give birth (see Remark~\ref{rem:fromYuletoERM}. For any two such models $\{q_i^{j_1j_2}(t)\}_{i,j_1\le j_1\in\{1,2\}}$ and $\{{q'}_i^{j_1j_2}(t)\}_{i,j_1\le j_1\in\{1,2\}}$ we can compare the weights $a_1$ and $a'_1$ of choosing type $1$ lineages. We provide a comparison between the asymptotic fraction of different types of cherries $\bm w_1=[w_1^{11}, w_1^{12}, w_1^{22}]^{\rm T}$ and  $\bm w_2=[w_2^{11}, w_2^{12}, w_2^{22}]^{\rm T}$ in the two models based on the comparison of their weights $a_1(t)$ and $a_2(t)=1-a_1(t)$ of choosing a lineage of different types to give birth. 

\begin{prop}\label{prop:comparison} Assume that the birth rates $\{q_i^{j_1j_2}(t)\}_{i,j_1\le j_1\in\{1,2\}}$ and $\{{q'}_i^{j_1j_2}(t)\}_{i,j_1\le j_1\in\{1,2\}}$ in the two models are such that, their limits $q_\ell^{j_1j_2}:=\lim_{t\rightarrow\infty}q_\ell^{j_1j_2}(t), q_\ell:=\lim_{t\rightarrow\infty}q_\ell(t)$ satisfy
\begin{equation}\label{condition-splitting-prob}
\frac{q_1^{11}-q_1^{22}}{q_1}=1+\frac{q_2^{11}-q_2^{22}}{q_2}, \quad \frac{{q'}_1^{11}-{q'}_1^{22}}{{q'}_1}=1+\frac{{q'}_2^{11}-{q'}_2^{22}}{{q'}_2}
\end{equation}
Then, the asymptotic proportions of cherries of type 1 and type 2 in the two models satisfy  monotonicity in terms of weights $a_1$ and $a'_1$ given by
$$a_1<a'_1 \quad\Rightarrow\quad w_1^{j_1j_2}<{w'}_1^{j_1j_2},\,\forall j_1\le j_2 \; \mbox{ and }\;w_2^{j_1j_2}>{w'}_2^{j_1j_2},\,\forall j_1\le j_2$$
where $a_1:=\lim_{t\rightarrow\infty}a_1(t), a'_1:=\lim_{t\rightarrow\infty}a'_1(t)$ denote the limiting weights of type 1 lineages.
\end{prop}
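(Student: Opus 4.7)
The plan is to use the explicit formula for $\bm w_\ell$ from Theorem~\ref{thm:limit-eta-tdepnotcomm} together with the spectral simplification of $\bm B$ (from Lemma~\ref{lem:meannudependant-eq}) forced by condition~\eqref{condition-splitting-prob}. In the absence of mutations the matrix $\bm A_\ell$ of Proposition~\ref{prop:meanmudependant} is diagonal with $(j_1j_2,j_1j_2)$-entry $-(q_{j_1}+q_{j_2})$, so the limit formula specializes to
\begin{equation*}
w_\ell^{j_1j_2} \;=\; \frac{u_\ell\, q_\ell^{j_1j_2}}{q_{j_1}+q_{j_2}+\lambda},
\end{equation*}
where $\lambda$ and $\bm u=(u_1,u_2)$ are the Perron eigenvalue and right eigenvector of $\bm B$, with the normalization $u_1+u_2=1$.

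Setting $\alpha:=q_1^{11}-q_1^{22}$, $\beta:=q_2^{22}-q_2^{11}$, $b_{12}:=2q_2^{11}+q_2^{12}=q_2-\beta$ and $b_{21}:=2q_1^{22}+q_1^{12}=q_1-\alpha$, I would first show that condition~\eqref{condition-splitting-prob} is equivalent to $\alpha/q_1+\beta/q_2=1$, and after expansion this is in turn equivalent to $\det\bm B=\alpha\beta-b_{12}b_{21}=q_1q_2-q_1b_{12}-q_2b_{21}=0$. Thus $\bm B$ has eigenvalues $0$ and $\lambda=\mathrm{tr}\,\bm B=\alpha+\beta$, and its normalized Perron eigenvector admits the closed form $u_1=\alpha/q_1=b_{12}/q_2$, $u_2=\beta/q_2=b_{21}/q_1$. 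Writing $q_\ell^{j_1j_2}=q_\ell p_\ell^{j_1j_2}$ with $p_\ell^{j_1j_2}$ the conditional splitting probabilities (shared between the two compared models, as is implicit in the statement), the $u_\ell$ depend only on the $p_\ell^{j_1j_2}$ while $\lambda=q_1u_1+q_2u_2$ is linear in $(q_1,q_2)$. Each $w_\ell^{j_1j_2}$ is therefore invariant under the scaling $(q_1,q_2)\mapsto(cq_1,cq_2)$ and depends on the overall rates only through $a_1=q_1/(q_1+q_2)$.

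After normalizing to $q_1=a_1$, $q_2=1-a_1$, so that $\lambda=u_2+a_1(u_1-u_2)$, the remainder is a direct differentiation. The pivotal identity $\lambda-a_1(u_1-u_2)=u_2$ (and its analogue $-\lambda-(1-a_1)(u_1-u_2)=-u_1$) forces every cross-term in $\partial w_\ell^{j_1j_2}/\partial a_1$ to cancel, producing sign-determined single-term expressions such as
\begin{equation*}
\frac{\partial w_1^{11}}{\partial a_1} = \frac{u_1u_2\,p_1^{11}}{(2a_1+\lambda)^2},\qquad \frac{\partial w_2^{11}}{\partial a_1} = -\frac{(2+u_1)u_2\,p_2^{11}}{(2a_1+\lambda)^2},
\end{equation*}
and analogously for the remaining four derivatives. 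All three $\partial w_1^{j_1j_2}/\partial a_1$ are strictly positive and all three $\partial w_2^{j_1j_2}/\partial a_1$ strictly negative under the non-degeneracy $u_1,u_2>0$ guaranteed by Perron-Frobenius for irreducible $\bm B$, and integration across $[a_1,a_1']$ then delivers the claimed inequalities.

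The main obstacle is to surface the algebraic role of condition~\eqref{condition-splitting-prob}: without it $\det\bm B\neq 0$, the Perron pair of $\bm B$ is only accessible through a genuine quadratic, and neither the closed-form identity $\bm u=(\alpha/q_1,\beta/q_2)$ nor the pivotal cancellation $\lambda-a_1(u_1-u_2)=u_2$ would be available. The dependence of $w_\ell^{j_1j_2}$ on $(q_1,q_2)$ would then fail to factor cleanly through the single parameter $a_1$, and the monotone comparison would lose its meaning.
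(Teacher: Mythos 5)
Your proof is correct and follows essentially the same route as the paper: specialize the limit formula of Theorem~\ref{thm:limit-eta-tdepnotcomm} to the mutation-free case where $\bm A_\ell$ is diagonal, write $w_\ell^{j_1j_2}=u_\ell q_\ell^{j_1j_2}/(q_{j_1}+q_{j_2}+\lambda)$ as an explicit rational function of $a_1$ and the splitting probabilities $p_\ell^{j_1j_2}$, and check the sign of $\partial w_\ell^{j_1j_2}/\partial a_1$ --- your closed forms ($u_1=p_1^{11}-p_1^{22}$, $u_2=1-p_1^{11}+p_1^{22}$, and all six derivatives) agree exactly with the paper's displayed expressions. The one thing you add, which the paper uses implicitly but never names, is the observation that condition~(\ref{condition-splitting-prob}) is precisely $\det\bm B=0$, so that $\lambda=\mathrm{tr}\,\bm B$ and the Perron eigenvector come out in closed form; this is a genuine clarification, though both arguments share the same tacit non-degeneracy assumptions ($p_1^{11}>p_1^{22}$, $p_2^{22}>p_2^{11}$, and strictly positive $p_\ell^{j_1j_2}$) needed for the strict inequalities.
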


\begin{proof}
Theorem~\ref{thm:limit-eta-tdepnotcomm} implies that the vectors $\bm w_\ell=[w_\ell^{11},w_\ell^{12},w_\ell^{22}]$ for $\ell\in\{1,2\}$ satisfy 
$$\bm w_\ell=\lim_{t\rightarrow\infty} \bm \eta_\ell(t)=-u_\ell\lim_{t\rightarrow\infty}(\bm A_\ell(t)-\lambda \bm I)^{-1} \lim_{t\rightarrow\infty}\bm q_{(\ell)}.$$
Since we are considering multi-type Yule models with mutation rates $q_1^2(t)=q_2^1(t)=0$, the matrix $\bm A(t)$ from Proposition~\ref{prop:meanmudependant} depends only on $q_1(t)$ and $q_2(t)$. 

Let $p_\ell^{j_1j_2}:=q_\ell^{j_1j_2}/q_\ell$ denote the probabilities that a birth event at a lineage of type $\ell$ results in types $j_1, j_2$ (see Remark~\ref{rem:fromYuletoERM}) in the limit as $t\rightarrow\infty$. Then 
$q_\ell^{j_1j_2}=p_\ell^{j_1j_2}a_\ell(q_1+q_2)$ and the assumption on the limiting birth rates becomes
$$p_1^{11}+p_2^{22}=1+p_1^{22}+p_2^{11}$$
Substituting all this into the equations for $\bm w_\ell$ above, and using $a_2=1-a_1$, we obtain expressions for $\bm w_1,\bm w_2$ that are written entirely in terms of probabilities $p_\ell^{j_1j_2}$ and weight $a_1$:
$$w_1^{11}=\frac{a_1p_1^{11}(p_1^{11}-p_1^{22})}{2p_1^{11}a_1+a_1-2p_1^{22}a_1-p_1^{11}+p_1^{22}+1},$$
$$w_1^{12}=\frac{a_1p_1^{12}(p_1^{11}-p_1^{22})}{2p_1^{11}a_1-a_1-2p_1^{22}a_1-p_1^{11}+p_1^{22}+2},$$
$$w_1^{22}=\frac{a_1p_1^{22}(p_1^{11}-p_1^{22})}{2p_1^{11}a_1-3a_1-2p_1^{22}a_1-p_1^{11}+p_1^{22}+3},$$
$$w_2^{11}=\frac{p_2^{11}(1-p_1^{11}+p_1^{22})(1-a_1)}{2p_1^{11}a_1+a_1-2p_1^{22}a_1-p_1^{11}+p_1^{22}+1},$$
$$w_2^{12}=\frac{p_2^{12}(1-p_1^{11}+p_1^{22})(1-a_1)}{2p_1^{11}a_1-a_1-2p_1^{22}a_1-p_1^{11}+p_1^{22}+2},$$
$$w_2^{11}=\frac{p_2^{22}(1-p_1^{11}+p_1^{22})(1-a_1)}{2p_1^{11}a_1-3a_1-2p_1^{22}a_1-p_1^{11}+p_1^{22}+3}.$$
In order to prove the monotonicity of $w_\ell^{j_1j_2}$ as a function of $a_1$ for $\ell,j_1,j_2\in\{1,2\}$, it suffices to check their first derivate with respect to $a_1$:
$$\frac{\partial w_1^{11}}{\partial a_1}=\frac{p_1^{11}(p_1^{11}-p_1^{22})(1-p_1^{11}+p_1^{22})}{(2p_1^{11}a_1+a_1-2p_1^{22}a_1-p_1^{11}+p_1^{22}+1)^2}>0,$$
$$\frac{\partial w_1^{12}}{\partial a_1}=\frac{p_1^{12}(p_1^{11}-p_1^{22})(2-p_1^{11}+p_1^{22})}{(2p_1^{11}a_1-a_1-2p_1^{22}a_1-p_1^{11}+p_1^{22}+2)^2}>0,$$
$$\frac{\partial w_1^{22}}{\partial a_1}=\frac{p_1^{22}(p_1^{11}-p_1^{22})(3-p_1^{11}+p_1^{22})}{(2p_1^{11}a_1-3a_1-2p_1^{22}a_1-p_1^{11}+p_1^{22}+3)^2}>0,$$
$$\frac{\partial w_2^{11}}{\partial a_1}=\frac{-p_2^{11}(2-(p_1^{11}-p_1^{22})(1+p_1^{11}-p_1^{22}))}{(2p_1^{11}a_1+a_1-2p_1^{22}a_1-p_1^{11}+p_1^{22}+1)^2}<0,$$
$$\frac{\partial w_2^{12}}{\partial a_1}=\frac{-p_2^{12}(1-(p_1^{11}-p_1^{22})^2)}{(2p_1^{11}a_1-a_1-2p_1^{22}a_1-p_1^{11}+p_1^{22}+2)^2}<0,$$
$$\frac{\partial w_2^{22}}{\partial a_1}=\frac{-p_2^{22}(p_1^{11}-p_1^{22})(1-p_1^{11}+p_1^{22})}{(2p_1^{11}a_1-3a_1-2p_1^{22}a_1-p_1^{11}+p_1^{22}+3)^2}<0,$$
and the result follows.
\end{proof}

The assumption for the limiting birth rates can be satisfied in relevant models. For example, in a  process where at a birth event types of the two continuing lineages are assigned according to a Markov process with transition probabilities $(s_{ij})_{i,j\in\{1,2\}}$, the probabilities $p_\ell^{j_1j_2}$ are 
$$p_1^{11}=(1-s_{12})^2,\quad p_1^{12}=2(1-s_{12})s_{12},\quad p_1^{22}=(s_{12})^2,$$
$$p_2^{22}=(1-s_{21})^2,\quad p_2^{12}=2(1-s_{21})s_{21},\quad p_2^{11}=(s_{21})^2,$$
and the assumption (\ref{condition-splitting-prob}) is equivalent to $s_{12}+s_{21}=1/2$. 

\begin{rem}
In the special case when the overall birth rates are equal  $a_1=a_2=1/2$ (`neutral' underlying tree shape), if edge lengths in the multi-type Yule tree without mutations are ignored the resulting distribution on the tree is that of a corresponding multi-type ERM model. Accordingly, 
 the asymptotic fractions of cherries we obtained in the proof of Proposition~\ref{prop:comparison} are in fact the same as asymptotic fractions obtained in Theorem~\ref{thm:polya-coro-1} for the multi-type ERM trees with probabilities $\{p_i^{j_1j_2}\}_{i,j_1\le j_2\in\{1,2\}}$. 
 \end{rem}


\end{document}